\newcommand{\fifi}{\pmb{[11]}}
\newcommand{\fiin}{\pmb{[1\infty]}}
\newcommand{\inin}{\pmb{[\infty\infty]}}
\newcommand{\infi}{\pmb{[\infty 1]}}
	\newcommand{\ftn}[3]{ #1 : #2 \rightarrow #3 }
		\newcommand{\setof}[2]{\ensuremath{\left\{ #1 \: : \: #2 \right\}}}
	\newcommand{\ZZ}{\ensuremath{\mathbb{Z}}\xspace}
	\newcommand{\CC}{\ensuremath{\mathbb{C}}\xspace}
	\newcommand{\RR}{\ensuremath{\mathbb{R}}\xspace}
	\newcommand{\KKK}{\ensuremath{\mathbb{K}}\xspace}
	\newcommand{\kk}{\ensuremath{\mathit{KK}}\xspace}
	\newcommand{\id}{\ensuremath{\operatorname{id}}}
	\newcommand{\multialg}[1]{\mathcal{M}(#1)\xspace}
	\newcommand{\corona}[1]{\mathcal{Q}(#1)\xspace}
	\newcommand{\cstar}{{$C \sp \ast$}\xspace}
		\newcommand{\Z}{\ensuremath{\mathbb{Z}}\xspace}
	\newcommand{\C}{\ensuremath{\mathbb{C}}\xspace}
	\newcommand{\Q}{\ensuremath{\mathbb{Q}}\xspace}
	\newcommand{\N}{\ensuremath{\mathbb{N}}\xspace}
	\newcommand{\K}{\ensuremath{\mathbb{K}}\xspace}
	\newcommand{\ksix}{\ensuremath{K_{\mathrm{six}}}\xspace}
	\theoremstyle{plain}
	\newtheorem{thm}{Theorem}[section]
	\newtheorem{lemma}[thm]{Lemma}
	\newtheorem{theor}[thm]{Theorem}
	\newtheorem{propo}[thm]{Proposition}
	\newtheorem{corol}[thm]{Corollary}
	\theoremstyle{definition}
	\newtheorem{defin}[thm]{Definition}
	\newtheorem{remar}[thm]{Remark}
	\newtheorem{notat}[thm]{Notation}
	\newtheorem{examp}[thm]{Example}
	\numberwithin{equation}{section}
	\numberwithin{figure}{section}
\begin{document}
	\title{The ordered $K$-theory of a full extension}
	\author{S{\o}ren Eilers}
        \address{Department of Mathematical Sciences \\
        University of Copenhagen\\
        Universitetsparken~5 \\
        DK-2100 Copenhagen, Denmark}
        \email{eilers@math.ku.dk }
        \author{Gunnar Restorff}
\address{Faculty of Science and Technology\\University of Faroe 
Islands\\N\'oat\'un 3\\FO-100 T\'orshavn\\Faroe Islands}
\email{gunnarr@setur.fo}
	\author{Efren Ruiz}
        \address{Department of Mathematics\\University of Hawaii,
Hilo\\200 W. Kawili St.\\
Hilo, Hawaii\\
96720-4091 USA}
        \email{ruize@hawaii.edu}
        \date{\today}
\thanks{This research was supported by the Danish National Research Foundation (DNRF) through the
Centre for Symmetry and Deformation.  Support was also provided by the NordForsk Research Network
  ``Operator Algebra and Dynamics'' (grant \#11580), and by the Faroese Research Council. }
	

	\keywords{Classification, extensions, graph algebras}
	\subjclass[2000]{Primary: 46L80, 46L35 Secondary: 46L05}

	\begin{abstract}
	Let $\mathfrak{A}$ be a $C^{*}$-algebra with real rank zero which has the stable weak cancellation property.  Let $\mathfrak{I}$ be an ideal of $\mathfrak{A}$ such that $\mathfrak{I}$ is stable and satisfies the corona factorization property.  We prove that 	
	\begin{align*}
	0 \to \mathfrak{I} \to \mathfrak{A} \to \mathfrak{A} / \mathfrak{I} \to 0
	\end{align*}
	is a full extension if and only if the extension is stenotic and $K$-lexicographic.  {As an immediate application, we extend the classification result for graph $C^*$-algebras obtained by Tomforde and the first named author to the general non-unital case. In combination with recent results  by Katsura, Tomforde, West and the first named author, our result may also be used to give a purely $K$-theoretical description of when an essential extension of two simple and stable graph $C^*$-algebras is again a graph $C^*$-algebra.}
	\end{abstract}

        \maketitle

\section{Introduction}        
        
An extension 
\begin{align*}
\mathfrak{e} : 0 \to \mathfrak{A} \to \mathfrak{E} \to \mathfrak{B} \to 0
\end{align*}
of $C^{*}$-algebras is called \emph{full} if the image of any nonzero element of $\mathfrak{B}$ under
the Busby map defining $\mathfrak{e}$ is never contained in a proper ideal of the corona
algebra $\corona{\mathfrak{A}} = \multialg{\mathfrak{A}} / \mathfrak{A}$.  Note that if the extension is essential and $\corona{\mathfrak{A}}$ is simple (as in the case when $\mathfrak{A} = \K$), then $\mathfrak{e}$ is a full extension.  Hence, the Brown-Douglas-Fillmore Theory (\cite{BDF1} and \cite{BDF2}) can be considered as the study of full extensions of $C(X)$ by $\K$.  In \cite{PPV1} and \cite{PPV2}, Pimsner, Popa, and Voiculescu studied homogeneous extensions of $C(X)\otimes \K$ by $\mathfrak{A}$ which turns out to be full extensions.  Motivating the study of essential extensions of $C(X)$ by $\mathfrak{A}$, when $\corona{\mathfrak{A}}$ is simple (\cite{HLextI} and \cite{HLextII}).  These extensions were later characterized by Elliott and Kucerovsky (\cite{gaedk:avbfat}) which they called ``purely large extensions'' and this notion has turned out to be of great importance for the classification theory of
non-simple infinite $C^{*}$-algebras.

In fact, the notion appeared in disguise already in the first such
classification result by R{\o}rdam \cite{extpurelyinf}, when both $\mathfrak{A}$ and $\mathfrak{B}$ are stable and purely
infinite, but did not need explicit mention since both $\mathfrak{B}$ and $\corona{ \mathfrak{A} }$ are
simple in that case. In a series of papers (see \cite{ERRshift}, \cite{ERRlinear}, and \cite{segrer:ccfis}), the authors have used this notion
to prove classification results for a host of $C^{*}$-algebras related to graphs
and dynamical systems, where fullness is automatic. Advances have also been made in other cases, but progress here has been slow  due to lack
of this property.

{In the present paper, in the case when the real rank of $\mathfrak E$ is zero, and under certain technical assumptions on $\mathfrak A$ and $\mathfrak B$ which hold automatically in the setting we are interested in, we give a  characterization of when $\mathfrak e$ is full.  Using (slightly modified) terminologies introduced by Goodearl and Handelman in \cite{kgdh:stenosis} and Handelman in \cite{dh:extAF}, we show that $\mathfrak{e}$ is full if and only if $\mathfrak{e}$ is stenotic and $K$-lexicographic.  Our main result of this kind is presented in Theorem~\ref{t:full} and may be slightly improved under further assumptions. When the ideal lattice in question is linear (cf. \cite{ERRlinear}), a very efficient reformulation is given in Corollary~\ref{lincase}. In most of our results  we rely on the notion of \textsl{weak cancellation} investigated by 
Ara, Moreno, and Pardo in \cite{amp:nonstablekthy}. Although possibly true for all real rank zero $C^*$-algebras, the range of this property is not fully understood, but it is known to hold in the case of graph $C^*$-algebras which has been our main concern in applications.}

{We draw attention to four different applications of this characterization. \textbf{First} -- our original motivation -- it explains why, in most of the classification results presented in \cite{ERRshift}, \cite{ERRlinear}, and \cite{semt_classgraphalg}, the order of $K_0(\mathfrak E)$ may be deleted from the invariant leaving it still complete. The explanation for this phenomenon is simply that we explicitly or implicitly call for fullness in the theorems, and that fullness in turn forces the order of $K_0(\mathfrak E)$ to be of a certain form which may be recovered from the order of $K_0(\mathfrak A)$ and $K_0(\mathfrak B)$. We can also, \textbf{second}, give an example showing the fact that even if one is prepared to use the order on $K_0(\mathfrak E)$, the invariants are not complete for non-full essential extensions of classifiable simple $C^*$-algebras.

\textbf{Third}, our concrete $K$-theoretical test
for fullness combines with results in a paper by Katsura,
Tomforde, West and the first named author (\cite{ektw}) in which the range of the invariants for one-ideal
graph $C^{*}$-algebras is determined, and with the classification results in \cite{semt_classgraphalg}, to lead
to the first permanence result for extensions of graph $C^{*}$-algebras.
 \textbf{Fourth}, it allows the completion of a result from \cite{ERRshift} to establish that
extensions with a stable ideal are full precisely when their stabilizations
are, leading us in turn to an exact classification result in some cases solved
up to stable isomorphism in \cite{semt_classgraphalg}. 

\section{Preliminaries} 
 Throughout the entire paper $\{ e_{ij} \}_{ i , j = 1}^{ \infty }$ will denote a system of matrix units for $\K$.  For a $C^{*}$-algebra $\mathfrak{A}$, $\multialg{ \mathfrak{A} }$ will denote the multiplier algebra of $\mathfrak{A}$ and $\corona{\mathfrak{A}} $ will denote the corona algebra $\multialg{ \mathfrak{A} } / \mathfrak{A}$ of $\mathfrak{A}$. We say an element $a \in \mathfrak{A}$ is \emph{norm-full in $\mathfrak{A}$} if $a$ is not contained in any normed closed proper ideal of $\mathfrak{A}$.  We say that a sub-$C^{*}$-algebra $\mathfrak{B}$ of $\mathfrak{A}$ is \emph{norm-full in $\mathfrak{A}$} if $\mathfrak{B}$ is not contained in any normed closed proper ideal of $\mathfrak{A}$.  The term ``full'' is also widely used, but since we will often work in multiplier algebras, we emphasize that it is the norm topology we are using, rather than the strict topology.

 \begin{defin}
Let $\mathfrak{I}$ be an ideal of the $C^*$-algebra $\mathfrak{A}$. Let $\ftn{ \theta_{ \mathfrak{I} } }{ \mathfrak{I} }{ \multialg{ \mathfrak{I} } }$ be the canonical embedding of $\mathfrak{I}$ as an essential ideal of $\multialg{ \mathfrak{I} }$.  Let $\ftn{ \sigma_{ \mathfrak{e} } }{ \mathfrak{A} }{ \multialg{ \mathfrak{I} } }$ be the homomorphism such that the diagram
\begin{equation*}
\xymatrix{
0 \ar[r] & \mathfrak{I} \ar[r] \ar@{=}[d]& \mathfrak{A} \ar[r] \ar[d]^{\sigma_{ \mathfrak{e} } } & \mathfrak{A} / \mathfrak{I} \ar[r] \ar[d]^{ \tau_{ \mathfrak{e} } }& 0 \\
0 \ar[r] & \mathfrak{I} \ar[r]_-{ \theta_{ \mathfrak{I} } }  & \multialg{ \mathfrak{I} } \ar[r]_-{ \pi_{ \mathfrak{I} } }  & \corona{  \mathfrak{I} } \ar[r] & 0 
}
\end{equation*}
is commutative, where $\tau_{ \mathfrak{e} }$ is the Busby invariant of the extension $\mathfrak{e} : 0 \to \mathfrak{I} \to \mathfrak{A} \to \mathfrak{A} / \mathfrak{I} \to 0.$
\end{defin}

\begin{notat}
Let $\mathfrak{A}$ be a $C^{*}$-algebra and $\mathfrak{I}, \mathfrak{D}$ be ideals of $\mathfrak{A}$ such that $\mathfrak{I} \subseteq \mathfrak{D}$.  
\begin{itemize}
\item[(1)] Let $\ftn{ \iota_{ \mathfrak{I} , \mathfrak{D} } }{ \mathfrak{I} }{ \mathfrak{D} }$ be the canonical inclusion map and $\ftn{ \pi_{\mathfrak{I} , \mathfrak{D} } }{ \mathfrak{D} }{ \mathfrak{D} / \mathfrak{I} }$ be the canonical projection. 

\item[(2)] If $\ftn{ \phi }{ \mathfrak{A} }{ \mathfrak{B} }$ is a homomorphism between $C^{*}$-algebras, then the homomorphism $\ftn{ \phi \otimes \id_{ \K } }{ \mathfrak{A} \otimes \K }{ \mathfrak{B} \otimes \K }$ will be denoted by $\phi^{s}$.
\end{itemize}
\end{notat}
 
 \begin{defin}
Let $\mathfrak{A}$ be a $C^{*}$-algebra and let $\mathfrak{B}$ be a sub-$C^{*}$-algebra of $\mathfrak{A}$.  For projections $p, q \in \mathfrak{B}$, we write \emph{$p \sim q$ in $\mathfrak{B}$} if there exists $v \in \mathfrak{B}$ such that $v^{*} v = p$ and $v v^{*} = q$ and we write \emph{$p \lesssim q$ in $\mathfrak{B}$}, if there exists a projection $e$ in $\mathfrak{B}$ such that $p \sim e$ in $\mathfrak{B}$ and $e \leq q$.  If $p \sim q$ in $\mathfrak{A}$ ($p \lesssim q$ in $\mathfrak{A}$), we write $p \sim q$ (respectively, $p \lesssim q$).
\end{defin}
 
We end this section with a series of lemmas that will be used throughout the next section. 

\begin{lemma}\label{l:her}
Let $\mathfrak{A}$ be a separable $C^{*}$-algebra and let $\mathfrak{B}$ be a hereditary sub-$C^{*}$-algebra of $\mathfrak{A}$.  Suppose $p$ and $q$ are projections in $\mathfrak{B}$ such that $p \sim q$.  Then $p \sim q$ in $\mathfrak{B}$.  Consequently, if $p$ and $q$ are projections in $\mathfrak{B}$ and $p \lesssim q$, then $p \lesssim q$ in $\mathfrak{B}$.
\end{lemma}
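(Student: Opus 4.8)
The plan is to reduce everything to the standard characterization of hereditary sub-$C^{*}$-algebras: a closed sub-$C^{*}$-algebra $\mathfrak{B} \subseteq \mathfrak{A}$ is hereditary if and only if $\mathfrak{B} \mathfrak{A} \mathfrak{B} \subseteq \mathfrak{B}$. Granting this, the equivalence in $\mathfrak{B}$ becomes almost immediate and, somewhat surprisingly, does not even require separability of $\mathfrak{A}$.

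Concretely, suppose $v \in \mathfrak{A}$ implements $p \sim q$, so that $v^{*} v = p$ and $v v^{*} = q$. The first step is to record the elementary identities $v = vp$ and $v = qv$. These follow from the computation $(v - vp)^{*}(v - vp) = v^{*}v - v^{*}v\,p - p\,v^{*}v + p\,v^{*}v\,p = p - p - p + p = 0$, which forces $v = vp$, and symmetrically $(v - qv)(v - qv)^{*} = q - q - q + q = 0$ forces $v = qv$. Combining these gives $v = q v p$. Since $p, q \in \mathfrak{B}$ and $v \in \mathfrak{A}$, we conclude $v = q v p \in \mathfrak{B} \mathfrak{A} \mathfrak{B} \subseteq \mathfrak{B}$. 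Thus the partial isometry $v$ already lies in $\mathfrak{B}$, and it witnesses $p \sim q$ in $\mathfrak{B}$.

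For the ``consequently'' clause, assume $p \lesssim q$ (in $\mathfrak{A}$) and choose a projection $e \in \mathfrak{A}$ with $p \sim e$ and $e \leq q$. Because $0 \leq e \leq q$ with $q \in \mathfrak{B}$, heredity gives $e \in \mathfrak{B}$. Now $p$ and $e$ are projections in $\mathfrak{B}$ with $p \sim e$, so the first part of the lemma yields $p \sim e$ in $\mathfrak{B}$; together with $e \leq q$ this is precisely $p \lesssim q$ in $\mathfrak{B}$.

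The only point requiring any care is the invocation of $\mathfrak{B} \mathfrak{A} \mathfrak{B} \subseteq \mathfrak{B}$, which I would simply cite from a standard reference on hereditary subalgebras; once this is in hand there is no genuine obstacle. I note that separability, although assumed in the statement, plays no role in this particular argument, so it is presumably carried along only as a blanket hypothesis used elsewhere in the paper.
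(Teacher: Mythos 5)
Your proof is correct, and it differs from the paper's in one genuine respect: how membership of the partial isometry in $\mathfrak{B}$ is justified. Both arguments hinge on the same element $qvp$ --- the paper sets $w = qvp$ and verifies $w^{*}w = p$, $ww^{*} = q$, whereas you observe the slightly slicker fact that $v = qvp$ already (your identities $v = vp$, $v = qv$ are verified correctly). But the paper then invokes separability: since $\mathfrak{A}$ is separable, the hereditary subalgebra $\mathfrak{B}$ is singly generated, $\mathfrak{B} = \overline{a\mathfrak{A}a}$ for some $a \in \mathfrak{A}_{+}$, and $qvp \in \overline{a\mathfrak{A}a}$ follows. You instead cite the standard characterization that a closed sub-$C^{*}$-algebra $\mathfrak{B} \subseteq \mathfrak{A}$ is hereditary if and only if $\mathfrak{B}\mathfrak{A}\mathfrak{B} \subseteq \mathfrak{B}$ (e.g.\ Theorem~3.2.2 in Murphy's book), which holds with no separability hypothesis. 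Your route is therefore both more elementary and more general, and your closing remark is right: separability plays no role in this lemma and is carried in the statement only because the paper's chosen proof (via single generation of $\mathfrak{B}$) happens to use it. The ``consequently'' clause is handled identically in both arguments: heredity puts $e$ in $\mathfrak{B}$, and the first part upgrades $p \sim e$ to equivalence in $\mathfrak{B}$.
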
 

\begin{proof}
Suppose $p \sim q$.  Then there exists $v \in \mathfrak{A}$ such that $v^{*} v = p$ and $v v^{*}  = q$.  Set $w = q v p$.  Then 
\begin{equation*}
w^{*} w = p v^{*} q v p = p \quad \text{and} \quad w w^{*} = q v p v^{*} q = q.
\end{equation*}
Since $\mathfrak{A}$ is separable, $\mathfrak{B} = \overline{ a \mathfrak{A} a }$ for some $a \in \mathfrak{A}_{+}$.  Hence, $w \in \overline{ a \mathfrak{A} a } = \mathfrak{B}$.

Suppose $p \lesssim q$.  Then there exists a projection $e \in \mathfrak{A}$ such that $p \sim e$ and $e \leq q$.  Since $q \in \mathfrak{B}$ and $\mathfrak{B}$ is a hereditary sub-$C^{*}$-algebra of $\mathfrak{A}$, $e \in \mathfrak{B}$.  Hence, $p \sim e$ in $\mathfrak{B}$ and $e \leq q$.  Therefore, $p \lesssim q$ in $\mathfrak{B}$.
\end{proof}
 
Adapting the proof of Lemma~2.6 of \cite{SZ_proj}, we get the following lemma.
 
\begin{lemma}\label{l:strict}
Let $\mathfrak{A}$ be a $C^{*}$-algebra and let $\mathfrak{I}$ be a non-unital ideal of $\mathfrak{A}$.  Let $p, q \in \mathfrak{A}$ be projections such that there exists a projection $e \in \mathfrak{A} / \mathfrak{I}$ with $\pi_{ \mathfrak{I} , \mathfrak{A} } ( p ) \sim e$ and $e \leq \pi_{ \mathfrak{I}, \mathfrak{A} } ( q )$.  Suppose $p \mathfrak{I}  p$ has an approximate identity consisting of projections.  Then there exist projections $p_{1} \in p \mathfrak{A} p$, $e_{1} \in q \mathfrak{A} q$ such that $p_{1} \sim e_{1}$, $\pi_{ \mathfrak{I} , \mathfrak{A} } ( p_{1} ) = \pi_{ \mathfrak{I} , \mathfrak{A} } ( p )$, and $\pi_{ \mathfrak{I} , \mathfrak{A} } ( e_{1}  ) < \pi_{ \mathfrak{I} , \mathfrak{A} } ( q )$ if $e < \pi_{ \mathfrak{I}, \mathfrak{A} } ( q )$ and $\pi_{ \mathfrak{I} , \mathfrak{A} } ( e_{1}  ) = \pi_{ \mathfrak{I} , \mathfrak{A} } ( q )$ if $e = \pi_{ \mathfrak{I}, \mathfrak{A} } ( q )$.
\end{lemma}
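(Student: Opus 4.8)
The plan is to lift the subequivalence from $\mathfrak{A}/\mathfrak{I}$ to $\mathfrak{A}$ by lifting the implementing partial isometry, using the projections in an approximate identity of $p\mathfrak{I}p$ to excise the obstruction to its being a genuine partial isometry, and then polar-decomposing. First I would fix an element $v$ in $\mathfrak{A}/\mathfrak{I}$ with $v^{*}v = \pi_{\mathfrak{I},\mathfrak{A}}(p)$ and $vv^{*} = e \leq \pi_{\mathfrak{I},\mathfrak{A}}(q)$; note that these relations give $v\,\pi_{\mathfrak{I},\mathfrak{A}}(p) = v$ and $\pi_{\mathfrak{I},\mathfrak{A}}(q)\,v = v$. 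Lifting $v$ arbitrarily and compressing by $q$ on the left and $p$ on the right, I obtain $V \in q\mathfrak{A}p$ with $\pi_{\mathfrak{I},\mathfrak{A}}(V) = v$. Setting $h := V^{*}V \in p\mathfrak{A}p$, I have $\pi_{\mathfrak{I},\mathfrak{A}}(h) = v^{*}v = \pi_{\mathfrak{I},\mathfrak{A}}(p)$, so that $g := h - p$ is a self-adjoint element of $p\mathfrak{I}p$.

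Next I would invoke the hypothesis that $p\mathfrak{I}p$ has an approximate identity of projections to choose a projection $f \in p\mathfrak{I}p$ with $\|g - fg\| < \delta$ for some fixed $\delta < 1$; since $f \in p\mathfrak{I}p$ we have $f \leq p$ and $\pi_{\mathfrak{I},\mathfrak{A}}(f) = 0$. The key computation is that $(p-f)g(p-f) = (g - fg)(p-f)$, whence $\|(p-f)g(p-f)\| \leq \|g-fg\| < \delta$. Consequently the positive element $k := (p-f)h(p-f) = (p-f) + (p-f)g(p-f)$ satisfies $\|k - (p-f)\| < \delta < 1$, so $k$ is invertible in the unital corner $(p-f)\mathfrak{A}(p-f)$ and $k^{-1/2}$ is well defined there. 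I expect this invertibility to be the main obstacle, and it is exactly where the approximate identity of \emph{projections} is indispensable: it lets me carve out a subprojection $p_{1} := p - f$ of $p$, having the same image $\pi_{\mathfrak{I},\mathfrak{A}}(p_1) = \pi_{\mathfrak{I},\mathfrak{A}}(p)$ in the quotient, on which the compression of $V$ becomes invertible. Without projections in the approximate identity one could only shrink $h$ by a positive element, and would not obtain a genuine subprojection to work in.

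Finally I would put $W := V(p-f)k^{-1/2}$ and verify directly that $W^{*}W = k^{-1/2}k\,k^{-1/2} = p - f = p_{1}$, that $e_{1} := WW^{*}$ is a projection with $W p_{1} = W$, and that $e_{1} \in q\mathfrak{A}q$ because $V \in q\mathfrak{A}p$ forces $W = qW$. Thus $p_{1} \sim e_{1}$ with $p_{1} \in p\mathfrak{A}p$ and $e_{1} \in q\mathfrak{A}q$. For the image conditions, $\pi_{\mathfrak{I},\mathfrak{A}}(p_{1}) = \pi_{\mathfrak{I},\mathfrak{A}}(p)$ is immediate, while $\pi_{\mathfrak{I},\mathfrak{A}}(k) = \pi_{\mathfrak{I},\mathfrak{A}}(p)$ gives $\pi_{\mathfrak{I},\mathfrak{A}}(W) = v\,\pi_{\mathfrak{I},\mathfrak{A}}(p) = v$ and hence $\pi_{\mathfrak{I},\mathfrak{A}}(e_{1}) = vv^{*} = e$. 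Since $\pi_{\mathfrak{I},\mathfrak{A}}(e_{1})$ equals $e$ on the nose, the required dichotomy between $\pi_{\mathfrak{I},\mathfrak{A}}(e_{1}) < \pi_{\mathfrak{I},\mathfrak{A}}(q)$ and $\pi_{\mathfrak{I},\mathfrak{A}}(e_{1}) = \pi_{\mathfrak{I},\mathfrak{A}}(q)$ transfers verbatim from the corresponding alternative for $e$, completing the argument.
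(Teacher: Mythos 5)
Your proposal is correct and follows essentially the same route as the paper's proof: lift the implementing partial isometry, compress to $q\mathfrak{A}p$, use a projection $f$ from the approximate identity of $p\mathfrak{I}p$ to make the compression $(p-f)V^{*}V(p-f)$ invertible in the corner $(p-f)\mathfrak{A}(p-f)$, and multiply by the inverse square root to produce the partial isometry realizing $p_{1}\sim e_{1}$. The only cosmetic difference is that you fix $vv^{*}=e$ exactly and transfer the dichotomy at the end, whereas the paper builds the strict/non-strict alternative into the initial choice of $v$; the two are equivalent.
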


\begin{proof}
Let $v \in \mathfrak{A}/\mathfrak{I}$ such that $v^{*} v = \pi_{ \mathfrak{I} , \mathfrak{A} } ( p )$ and $v v^{*} \leq \pi_{ \mathfrak{I} , \mathfrak{A} } ( q )$, with strict inequality if $e < \pi_{ \mathfrak{I}, \mathfrak{A} } ( q )$.  Take $V \in \mathfrak{A}$ such that $\pi_{ \mathfrak{I} , \mathfrak{A} } ( V ) = v$.  Set $w = q V p$.  Then
\begin{equation*}
\pi_{ \mathfrak{I} , \mathfrak{A} } ( w^{*} w ) = \pi_{ \mathfrak{I} , \mathfrak{A} } ( p ) 
\end{equation*}  
and 
\begin{equation*}
\pi_{ \mathfrak{I} , \mathfrak{A} } ( w w^{*} ) = \pi_{ \mathfrak{I} , \mathfrak{A} }( q ) v v^{*} \pi_{ \mathfrak{I} , \mathfrak{A} } ( q ) = v v^{*} \leq\pi_{ \mathfrak{I} , \mathfrak{A} } ( q )
\end{equation*}
with strict inequality if $e < \pi_{ \mathfrak{I}, \mathfrak{A} } ( q )$.

Since $w^{*} w - p \in p \mathfrak{I}  p$ and since $p \mathfrak{I} p$ has an approximate identity consisting of projections, there exists a projection $f \in p \mathfrak{I}  p$ such that 
\begin{equation*}
\| (p - f ) ( w^{*} w - p )( p - f ) \| < 1.
\end{equation*}
Hence, there exists $g \in ( p - f ) \mathfrak{A} (p - f )$ positive such that $g ( p - f ) w^{*} w (p-f) = ( p - f ) w^{*} w ( p - f ) g = p - f$.

Set $z =  wg^{ \frac{1}{2} } $.  Then $z^{*} z = g^{\frac{1}{2} }  w^{*} w g^{ \frac{1}{2} } = g^{ \frac{1}{2} } ( p - f ) w^{*} w (p-f) g^{ \frac{1}{2} } = p - f \in p \mathfrak{A} p$ and $z z^{* } = w g w^{*} =  q V p g p V^{*} q \in q \mathfrak{A} q$.  Set $p_{1} = z^{*} z$ and $e_{1} = z z^{*}$.  Then
\begin{equation*}
\pi_{ \mathfrak{I} , \mathfrak{A} } ( p_{1} ) = \pi_{ \mathfrak{I} , \mathfrak{A} } ( p - f ) = \pi_{ \mathfrak{I} , \mathfrak{A} } ( p ).
\end{equation*}
Note that $\pi_{ \mathfrak{I} , \mathfrak{A} } ( g ) \pi_{ \mathfrak{I} , \mathfrak{A} } ( p ) = \pi_{ \mathfrak{I} , \mathfrak{A} } ( p )$.  Therefore, 
\begin{align*}
\pi_{ \mathfrak{I} , \mathfrak{A} } ( z z^{*} ) = \pi_{ \mathfrak{I} , \mathfrak{A} } ( w ) \pi_{ \mathfrak{I} , \mathfrak{A} } ( p ) \pi_{ \mathfrak{I} , \mathfrak{A} } ( w^{*} ) = \pi_{ \mathfrak{I} , \mathfrak{A} } ( w )  \pi_{ \mathfrak{I} , \mathfrak{A} } ( w^{*} w ) \pi_{ \mathfrak{I} , \mathfrak{A} } ( w^{*} ) = \pi_{ \mathfrak{I} , \mathfrak{A} } ( ww^{*} ) \leq \pi_{ \mathfrak{I} ,\mathfrak{A} } ( q )
\end{align*}
with strict inequality if $e < \pi_{ \mathfrak{I}, \mathfrak{A} } ( q )$.
\end{proof}

\begin{remar}
By \cite[Theorem~6.5.6]{blackadarB}, hereditary subalgebras of real rank zero $C^{*}$-algebras have an approximate identity consisting of projections.  This fact will be used throughout the paper.  We will also use the facts that real rank zero passes through ideals, quotients, and stabilization (see \cite[Theorem~3.14 and Corollary~3.3]{realrank}).
\end{remar}

The following observation follows easily from the fact that $\sigma_{ \mathfrak{e}}$ is injective.

\begin{lemma}\label{l:essentialmv}
Let $\mathfrak{A}$ be a $C^{*}$-algebra and let $\mathfrak{B}$ be an essential ideal of $\mathfrak{A}$.  If $p$ and $q$ are projections in $\mathfrak{A}$ such that $\sigma_{ \mathfrak{e} } ( p ) \leq \sigma_{ \mathfrak{e} } ( q )$, then $p \leq q$.
\end{lemma}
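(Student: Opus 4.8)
The plan is to reduce the statement to the injectivity of $\sigma_{\mathfrak{e}}$, which is exactly where the essentiality hypothesis enters, and then to handle the order inequality purely algebraically. The kernel of $\sigma_{\mathfrak{e}}$ consists of those $a \in \mathfrak{A}$ that annihilate $\mathfrak{B}$ on both sides, since $\sigma_{\mathfrak{e}}(a)$ is precisely the multiplier of $\mathfrak{B}$ implemented by left and right multiplication by $a$. As $\mathfrak{B}$ is an essential ideal this two-sided annihilator is $\{0\}$, so $\sigma_{\mathfrak{e}}$ is injective; I would either record this short argument or simply invoke it, as the remark preceding the statement indicates.

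The main tool is the standard characterization of the order on projections: for projections $a, b$ in any $C^{*}$-algebra one has $a \leq b$ if and only if $ab = a$. I would recall the brief argument, namely that from $a \leq b \leq 1$ one obtains $a = aaa \leq aba \leq a$, whence $aba = a$, and then $(ba - a)^{*}(ba - a) = a - aba = 0$, so that $ba = a$ and, taking adjoints, $ab = a$. The converse direction of this characterization is not even needed for the present argument, only the implication producing the identity $ab = a$ from the inequality, together with the reverse reading that $pq = p$ forces $p \leq q$.

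With these two ingredients the proof is immediate. Since $\sigma_{\mathfrak{e}}$ is a \starhom, the elements $\sigma_{\mathfrak{e}}(p)$ and $\sigma_{\mathfrak{e}}(q)$ are again projections, and the hypothesis $\sigma_{\mathfrak{e}}(p) \leq \sigma_{\mathfrak{e}}(q)$ gives $\sigma_{\mathfrak{e}}(p)\sigma_{\mathfrak{e}}(q) = \sigma_{\mathfrak{e}}(p)$ by the characterization above. Multiplicativity of $\sigma_{\mathfrak{e}}$ turns this into $\sigma_{\mathfrak{e}}(pq) = \sigma_{\mathfrak{e}}(p)$, that is $\sigma_{\mathfrak{e}}(pq - p) = 0$, and injectivity of $\sigma_{\mathfrak{e}}$ forces $pq = p$, which is exactly $p \leq q$. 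There is no genuine obstacle here; the only point deserving a word of care is that the inequality in the hypothesis is the order of positive elements in $\multialg{\mathfrak{B}}$, which for projections coincides with the subprojection relation exploited in the computation, so no passage between distinct notions of order is required.
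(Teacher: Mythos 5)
Your proof is correct and follows the paper's approach exactly: the paper gives no argument beyond remarking that the lemma ``follows easily from the fact that $\sigma_{\mathfrak{e}}$ is injective,'' and your write-up is precisely a careful filling-in of that remark, deriving injectivity from essentiality of $\mathfrak{B}$ and then converting $\sigma_{\mathfrak{e}}(p)\leq\sigma_{\mathfrak{e}}(q)$ into $pq=p$ via the standard characterization of the order on projections. Your algebraic route has the minor virtue of not needing the fact that injective $*$-homomorphisms reflect positivity, but this is a detail the paper leaves unspecified in any case.
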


\begin{lemma}\label{lem:projfull}
 Let $\mathfrak{A}$ be a non-zero separable $C^{*}$-algebra with real rank zero.  Suppose $p \in \multialg{ \mathfrak{A} \otimes \K }$ is a projection such that for all projections $e, f \in \mathfrak{A} \otimes \K$ with $f \leq p$ we have that $e  \lesssim p - f$ in $\multialg{ \mathfrak{A} \otimes \K }$ and $p ( \mathfrak{A} \otimes \K ) p$ is norm-full in $\mathfrak{A} \otimes \K$.  Then $1_{ \multialg{ \mathfrak{A} \otimes \K } } \sim p$.   
 \end{lemma}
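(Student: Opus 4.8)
The plan is to first produce, inside $p$, a subprojection that is Murray--von Neumann equivalent to $1_{\multialg{\mathfrak{A} \otimes \K}}$, and then to upgrade this internal subequivalence to a genuine equivalence $1_{\multialg{\mathfrak{A} \otimes \K}} \sim p$ by exploiting that $1_{\multialg{\mathfrak{A} \otimes \K}}$ is properly infinite. Write $\mathfrak{B} = \mathfrak{A} \otimes \K$; since $\mathfrak{A}$ is separable and of real rank zero, so is $\mathfrak{B}$, and $\mathfrak{B}$ is moreover stable. Because $\mathfrak{B}$ is $\sigma$-unital and has real rank zero, it admits an increasing approximate unit $(r_n)_{n \ge 1}$ consisting of projections in $\mathfrak{B}$, and such a sequence converges strictly to $1_{\multialg{\mathfrak{B}}}$. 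Putting $g_1 = r_1$ and $g_n = r_n - r_{n-1}$ for $n \ge 2$, these are mutually orthogonal projections in $\mathfrak{B}$ with $\sum_n g_n = 1_{\multialg{\mathfrak{B}}}$ strictly.

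Next I would construct inductively mutually orthogonal projections $p_n \le p$ with $p_n \sim g_n$. Suppose $p_1, \dots, p_{n-1}$ have been found; then $f_{n-1} := \sum_{j < n} p_j$ is a projection with $f_{n-1} \le p$, and $f_{n-1} \in \mathfrak{B}$ because each $p_j \sim g_j \in \mathfrak{B}$ forces $p_j \in \mathfrak{B}$ (if $u_j^* u_j = g_j$ then $u_j = u_j g_j \in \multialg{\mathfrak{B}} \cdot \mathfrak{B} \subseteq \mathfrak{B}$, so $p_j = u_j u_j^* \in \mathfrak{B}$, as $\mathfrak{B}$ is an ideal of $\multialg{\mathfrak{B}}$). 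Applying the hypothesis with $e = g_n$ and $f = f_{n-1}$ yields $g_n \lesssim p - f_{n-1}$ in $\multialg{\mathfrak{B}}$, so there is a projection $p_n \le p - f_{n-1}$, hence orthogonal to $f_{n-1}$ and below $p$, together with a partial isometry $u_n$ satisfying $u_n^* u_n = g_n$ and $u_n u_n^* = p_n$. Since the $g_n$ have mutually orthogonal ranges and likewise the $p_n$, a standard estimate ($\|\sum_{n=M}^N u_n b\|^2 = \|b^*(r_N - r_{M-1})b\| \to 0$ for $b \in \mathfrak{B}$) shows that $u := \sum_n u_n$ converges strictly in $\multialg{\mathfrak{B}}$, and then $u^* u = \sum_n g_n = 1_{\multialg{\mathfrak{B}}}$ while $u u^* = \sum_n p_n =: f \le p$. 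Thus $1_{\multialg{\mathfrak{B}}} \sim f$ with $f \le p$.

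It remains to turn this into $1_{\multialg{\mathfrak{B}}} \sim p$. Here I would use that $1_{\multialg{\mathfrak{B}}}$ is properly infinite: since $\mathfrak{B} = \mathfrak{A} \otimes \K$ is stable, the projections $1_{\multialg{\mathfrak{A}}} \otimes e_{nn} \in \multialg{\mathfrak{B}}$ are mutually orthogonal, mutually equivalent, and sum strictly to $1_{\multialg{\mathfrak{B}}}$, so splitting the index set into two infinite halves gives $1_{\multialg{\mathfrak{B}}} \sim 1_{\multialg{\mathfrak{B}}} \oplus 1_{\multialg{\mathfrak{B}}}$. Consequently $f \sim 1_{\multialg{\mathfrak{B}}}$ is properly infinite as well. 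Now $p = f + (p - f)$ is an orthogonal decomposition with $p - f \le 1_{\multialg{\mathfrak{B}}} \sim f$, so $p - f \lesssim f$; the standard absorption property of properly infinite projections (if $q$ is orthogonal to a properly infinite projection $f$ and $q \lesssim f$, then $f + q \sim f$) then yields $p = f + (p - f) \sim f \sim 1_{\multialg{\mathfrak{B}}}$, as desired.

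The inductive construction is routine once the comparison hypothesis is in hand: its sole purpose is to manufacture, step by step, orthogonal copies of the pieces $g_n$ of an approximate unit inside $p$, the key point being that removing the finite projection $f_{n-1} \le p$ never exhausts $p$. The main obstacle is the final passage from the internal subequivalence $1 \sim f \le p$ to the equality of classes $1 \sim p$: mutual subequivalence alone does not give Murray--von Neumann equivalence, and it is precisely the proper infiniteness of $1_{\multialg{\mathfrak{B}}}$ (coming from stability of $\mathfrak{B}$) that repairs this. The two technical points to verify with care are the strict convergence of $\sum_n u_n$ and the absorption lemma for properly infinite projections. I note that the norm-fullness of $p(\mathfrak{A} \otimes \K)p$ does not enter this particular argument; in the real rank zero setting it is in fact already forced by the comparison hypothesis, since every projection of $\mathfrak{B}$ is subequivalent to $p$ and therefore lies in the ideal generated by $p\mathfrak{B}p$, and projections span a dense subspace of $\mathfrak{B}$, so that ideal must be all of $\mathfrak{B}$. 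One may thus retain it as a convenient standing assumption.
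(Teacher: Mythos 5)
Your overall plan --- manufacture a copy of the unit inside $p$ and then absorb the remainder $p-f$ --- founders at two points, and they are exactly where the substance of the lemma lies. Write $\mathfrak{B}=\mathfrak{A}\otimes\K$. The first gap is the strict convergence of $u=\sum_n u_n$. Your estimate controls only $\bigl\|\sum_{n=M}^N u_n b\bigr\|$, i.e.\ multiplication on the right; for $u$ to lie in $\multialg{\mathfrak{B}}$ you also need $b\sum_{n=M}^N u_n\to 0$, and since $\bigl\|b\sum_{n=M}^N u_n\bigr\|^2=\bigl\|b\bigl(\sum_{n=M}^N p_n\bigr)b^*\bigr\|$, this amounts to strict convergence of $\sum_n p_n$ --- which nothing in your construction guarantees. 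Concretely, take $\mathfrak{B}=\mathcal{O}_2\otimes\K$ and $p=1_{\multialg{\mathfrak{B}}}$, which satisfies all hypotheses. All nonzero projections of $\mathcal{O}_2\otimes\K$ are Murray--von Neumann equivalent, so at every step of your induction you may legitimately choose $p_n$ to be a nonzero proper subprojection of $Q-f_{n-1}$, where $Q=1_{\mathcal{O}_2}\otimes e_{11}$; the constraints $p_n\sim g_n$ and $p_n\le p-f_{n-1}$ all hold, yet $\sup_N\bigl\|\bigl(\sum_{n=M}^N p_n\bigr)Q\bigr\|=1$ for every $M$ (infinitely many $p_n$ are nonzero because $\mathfrak{B}$ is nonunital), so $\sum_n p_n$ is not strictly convergent and $u\notin\multialg{\mathfrak{B}}$. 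Forcing the $p_n$ to exhaust $p$ requires a back-and-forth against an approximate unit of the corner $p\mathfrak{B}p$, and that is precisely what is packaged in the results the paper quotes instead: the hypothesis (applied with $f=e$) shows every projection $e\in p\mathfrak{B}p$ is equivalent, inside $p\mathfrak{B}p$, to a projection orthogonal to it, whence $p\mathfrak{B}p$ is stable by Theorem~3.3 of \cite{HjelmRord}, and then Theorem~4.23 of \cite{BrownSemi} yields $p\sim 1_{\multialg{\mathfrak{B}}}$. Brown's theorem consumes exactly the norm-fullness of $p\mathfrak{B}p$; the fact that your argument never uses fullness (even though, as you correctly observe, it follows from the comparison hypothesis under real rank zero) is a symptom of the missing exhaustion step rather than a simplification.

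The second gap is the ``standard absorption property'' you invoke: it is false that a properly infinite projection $f$ absorbs every orthogonal projection $q$ with $q\lesssim f$. Absorption would force $[q]=0$ in $K_0$, and this can fail: in $\mathcal{O}_\infty$ choose a nonzero projection $f_0$ with $[f_0]=0$ in $K_0(\mathcal{O}_\infty)\cong\ZZ$ (projections in a purely infinite simple unital $C^*$-algebra realize every $K_0$-class); then $f_0$ is full and properly infinite, so $1-f_0\lesssim f_0$ by Cuntz's comparison theorem, yet $f_0+(1-f_0)=1\not\sim f_0$ because $[1]\neq[f_0]$. In your situation the conclusion you want is true, but for a different reason: from $1\sim f\le p$ one gets that $p$ is full and properly infinite in $\multialg{\mathfrak{B}}$, and since $K_0(\multialg{\mathfrak{B}})=0$ for a $\sigma$-unital stable $\mathfrak{B}$, Cuntz's theorem from \cite{kthypureinf} that full properly infinite projections with equal $K_0$-classes are Murray--von Neumann equivalent gives $p\sim 1_{\multialg{\mathfrak{B}}}$; alternatively one may invoke Kasparov's stabilization theorem. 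Either way this step needs an argument of that calibre, not an elementary absorption lemma. The remaining parts of your proposal (the inductive use of the hypothesis, the membership $p_n\in\mathfrak{B}$, and the derivation of fullness of $p\mathfrak{B}p$ from the comparison hypothesis) are correct.
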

 
 \begin{proof}
First note that the assumption on $p$ implies that $p$ is not an element of $\mathfrak{A} \otimes \K$.  We claim that $p ( \mathfrak{A} \otimes \K ) p$ is a stable $C^{*}$-algebra.  Let $e$ be a nonzero projection in $p ( \mathfrak{A} \otimes \K ) p$.  By our assumption, $e \lesssim p - e$.  Therefore, there exists a nonzero projection $q \in (p - e ) (\mathfrak{A} \otimes \K ) ( p - e )$ such that $e \sim q$ in $p ( \mathfrak{A} \otimes \K ) p$.  Hence, by Theorem~3.3 of \cite{HjelmRord}, $p (\mathfrak{A} \otimes \K ) p$ is a stable $C^{*}$-algebra.  Thus, by Theorem~4.23 of \cite{BrownSemi}, $p \sim 1_{ \multialg{ \mathfrak{A} \otimes \K } }$.
\end{proof}

The following lemma is well-known but we were not able to find a reference for it so we provide the proof here.

\begin{lemma}\label{l:kthyrr0}
Let $\mathfrak{A}$ be a $C^{*}$-algebra with real rank zero.  If $x \in K_{0} ( \mathfrak{A} )$, then there exist projections $p$ and $q$ in $\mathsf{M}_{n} ( \mathfrak{A} )$ such that $x = [ p ] - [ q ]$.
\end{lemma}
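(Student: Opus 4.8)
The plan is to represent $x$ by a single self-adjoint element of $\mathsf{M}_{n}(\mathfrak{A})$ and then read off $p$ and $q$ as two of its spectral projections. Write $\widetilde{\mathfrak{A}}$ for the unitization and $\pi_{\mathbb{C}}\colon\widetilde{\mathfrak{A}}\to\mathbb{C}$ for the canonical character, so that by definition $K_{0}(\mathfrak{A})=\ker\bigl((\pi_{\mathbb{C}})_{*}\colon K_{0}(\widetilde{\mathfrak{A}})\to\mathbb{Z}\bigr)$. Since $\widetilde{\mathfrak{A}}$ is unital, I would first write $x=[P]-[Q]$ for projections $P,Q\in\mathsf{M}_{n}(\widetilde{\mathfrak{A}})$ (this step needs only unitality, not real rank zero). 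The condition $(\pi_{\mathbb{C}})_{*}(x)=0$ forces $\pi_{\mathbb{C}}(P)$ and $\pi_{\mathbb{C}}(Q)$ to be projections of equal rank in $\mathsf{M}_{n}(\mathbb{C})$, hence unitarily equivalent; conjugating $P$ by a scalar unitary $u\in\mathsf{M}_{n}(\mathbb{C})\subseteq\mathsf{M}_{n}(\widetilde{\mathfrak{A}})$, which does not change $[P]$, I may assume $\pi_{\mathbb{C}}(P)=\pi_{\mathbb{C}}(Q)$. Then $a:=P-Q$ is a self-adjoint element of the \emph{ideal} $\mathsf{M}_{n}(\mathfrak{A})$, with $\|a\|\le 1$ and $\mathrm{sp}(a)\subseteq[-1,1]$.

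Next I would manufacture the two projections spectrally. For a regular value $t\in(0,1)$ of $a$, meaning $\pm t\notin\mathrm{sp}(a)$, the functions $\chi_{(t,1]}$ and $\chi_{[-1,-t)}$ are continuous on $\mathrm{sp}(a)$ and vanish at $0$, so $p:=\chi_{(t,1]}(a)$ and $q:=\chi_{[-1,-t)}(a)$ are genuine projections lying in the ideal $\mathsf{M}_{n}(\mathfrak{A})$ rather than merely in $\mathsf{M}_{n}(\widetilde{\mathfrak{A}})$; in particular $[p],[q]\in K_{0}(\mathfrak{A})$. The remaining point is the identity $[p]-[q]=[P]-[Q]=x$, which I would extract from the normal form for a pair of projections (Halmos' two-subspace theorem): the portion of $\mathrm{sp}(a)$ lying in the open interval $(-1,1)$ is symmetric about $0$, and on each band $\{\,t<|\lambda|<1\,\}$ the element $a$ is invertible with a self-adjoint unitary in its polar decomposition that anticommutes with $a$, implementing a Murray--von Neumann equivalence $\chi_{(t,1)}(a)\sim\chi_{(-1,-t)}(a)$. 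Thus the ``generic'' bands cancel in $K_{0}$ and only the eigenvalue $\pm 1$ contributions survive, giving $[\chi_{(t,1]}(a)]-[\chi_{[-1,-t)}(a)]=[P]-[Q]$.

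The crux of the argument, and the only place where real rank zero is genuinely used, is the availability of a usable regular value $t$: nothing in the above prevents $\mathrm{sp}(a)$ from filling all of $(0,1)$, in which case no regular value exists and the bare spectral recipe collapses. That this cannot be repaired in general is shown by $C_{0}(\mathbb{R}^{2})$, for which the conclusion of the lemma fails, so real rank zero must enter here. To handle it I would use that $\mathsf{M}_{n}(\mathfrak{A})$ again has real rank zero, so the self-adjoint elements of finite spectrum are dense: I replace $a$ by a finite-spectrum self-adjoint $b$ with $\|a-b\|$ small, choose $t\in(0,1)$ in a spectral gap of $b$, and take the honest spectral projections $\chi_{(t,1]}(b),\chi_{[-1,-t)}(b)\in\mathsf{M}_{n}(\mathfrak{A})$; the two-projection computation of the previous paragraph then applies to $b$, and a perturbation argument shows that the signed spectral count across $0$ is unchanged in passing from $a=P-Q$ to the nearby $b$, so these projections still represent $x$. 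I expect precisely this step---securing a spectral gap near $0$ while certifying that the $K_{0}$-class is preserved---to be the main technical obstacle, the material of the first two paragraphs being routine reduction and two-projection bookkeeping.
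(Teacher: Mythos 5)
Your first paragraph is a correct standard reduction, and the regular-value identity in your second paragraph is in fact true --- though not by the route you describe: when $\pm 1\in\operatorname{sp}(a)$ and is not isolated, the ``bands'' $\chi_{(t,1)}(a)$ and the eigenprojections at $\pm 1$ simply do not exist in the C*-algebra (only $\chi_{(t,1]}(a)$ and $\chi_{[-1,-t)}(a)$ do), and Halmos' two-subspace theorem is a Hilbert-space statement whose implementing partial isometries (the polar part of $1-P-Q$ cut to a band) need not lie in $\mathsf{M}_{n}(\widetilde{\mathfrak{A}})$. The correct C*-argument uses that $r:=1-P-Q$ anticommutes with $a$ and satisfies $a^{2}+r^{2}=1$, hence commutes with $e:=\chi_{\{|\lambda|>t\}}(a)$; writing $2P-1=a-r$ and $2Q-1=-a-r$ and running homotopies such as $s\mapsto ae-sre$ (whose square is $(ae)^{2}+s^{2}(re)^{2}\ge t^{2}e$) inside the invertible self-adjoints of the corners $e(\cdot)e$ and $(1-e)(\cdot)(1-e)$, one gets $[P]-[Q]=[\chi_{(t,1]}(a)]-[\chi_{[-1,-t)}(a)]$. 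This is repairable bookkeeping.

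The genuine gap is your final step, and it is not merely a missing detail: the claim is false as stated. First, ``the two-projection computation of the previous paragraph then applies to $b$'' cannot be right, because that computation runs entirely on the anticommuting element $1-P-Q$, i.e.\ on $a$ being a difference of projections; the finite-spectrum approximant $b$ supplied by real rank zero is not of that form, and nothing pairs its positive and negative spectrum. Second, no ``perturbation argument'' can certify that the class is preserved, because an arbitrary close finite-spectrum approximant genuinely can change the class. Concretely, take the hyperfinite $\mathrm{II}_{1}$ factor $R$ (real rank zero, $K_{0}(R)\cong\mathbb{R}$ via the trace $\tau$), and projections $P,Q$ with $\tau(P)=\tau(Q)=\tfrac12$ in general position, so that $a=P-Q$ has diffuse (automatically symmetric) spectral measure $\mu$ with support $[-1,1]$; let $g$ be a step function with $\|g-\mathrm{id}\|_{\infty}\le\delta$ whose jump points on the two half-lines are misaligned by $\delta$, and put $b=g(a)\in R$ (Borel functional calculus). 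For suitable $t$ avoiding $\operatorname{sp}(b)$, the projections $p=\chi_{(t,\infty)}(b)$ and $q=\chi_{(-\infty,-t)}(b)$ are spectral projections of $a$ cut at mismatched points $s$ and $-s-\delta$, so $[p]-[q]=\tau(p)-\tau(q)=\mu((s,s+\delta])>0$, while $[P]-[Q]=0$. So which class your recipe produces depends on the choice of approximant, and real rank zero in the form of dense finite-spectrum elements gives you no control over that choice. This is exactly why the paper uses real rank zero in its other guise (approximate units of projections in hereditary subalgebras) rather than finite-spectrum density: having arranged $\pi(p_{1})\sim\pi(q_{1})$ via stable rank one of $\mathbb{C}$, its Lemma~\ref{l:strict} cuts a lifted intertwiner by a projection from an approximate unit of $p_{1}\mathsf{M}_{n}(\mathfrak{A})p_{1}$ to make a compression invertible, producing a partial isometry $v$ with $v^{*}v\le p_{1}$, $vv^{*}\le q_{1}$ and both defects in $\mathsf{M}_{n}(\mathfrak{A})$, whence $x=[p_{1}-v^{*}v]-[q_{1}-vv^{*}]$. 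Some such mechanism --- a cutting projection replacing the spectral gap you do not have --- is unavoidable, and it is precisely what your proposal lacks.
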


\begin{proof}
If $\mathfrak{A}$ is unital, then this is clear.  Suppose $\mathfrak{A}$ is non-unital.  Denote the unitization of $\mathfrak{A}$ by $\widetilde{\mathfrak{A}}$ and let $\ftn{ \pi }{ \widetilde{ \mathfrak{A} } }{ \C }$ be the natural projection.  Let $x \in K_{0} ( \mathfrak{A} )$.  Then $x = [ p_{1} ] - [ q_{1} ]$ where $p_{1}, q_{1}$ are projections in $\mathsf{M}_{n} ( \widetilde{\mathfrak{A}} )$ such that $K_{0} ( \pi ) ( [p_{1}] ) = K_{0} ( \pi ) ( [ q_{1} ] )$.  Since $\C$ has stable rank one, then $\pi ( p_{1} ) \sim \pi ( q_{1} )$.

By Lemma~\ref{l:strict}, there exists $v \in \mathsf{M}_{n} ( \widetilde{ \mathfrak{A} } )$ such that $v^{*} v \leq p_{1}$, $\pi ( v^{*} v ) = \pi ( p_{1} )$, $vv^{*} \leq q_{1}$, and $\pi ( vv^{*} ) = \pi ( q_{1} )$.  Therefore, $p_{1} - v^{*} v$ and $q_{1} - vv^{*}$ are projections in $\mathsf{M}_{n} ( \mathfrak{A} )$ and 
\begin{align*}
x = [ p_{1} ] - [ q_{1} ] = [ p_{1} - v^{*} v ] + [ v^{*} v ] - [ q_{1} ] = [ p_{1} - v^{*} v ] - [ q_{1} - v v^{*} ]. 
\end{align*}
Set $p = p_{1} - v^{*} v$ and $q= q_{1} - vv^{*}$.
\end{proof}
        
 \section{Full extensions and ordered $K$-theory}
 
In this section, we show, for a certain class of extensions which includes extensions arising from graphs, that the ordered $K$-theory of a stenotic extension with real rank zero has the ``lexicographic ordering'' precisely when the extension is full.  The results of this section explains why, in most of the classification results presented in \cite{ERRshift}, \cite{ERRlinear}, and \cite{semt_classgraphalg}, the order of $K_0(\mathfrak E)$ may be deleted from the invariant leaving it still complete.  Moreover, in the next section, we show that our concrete $K$-theoretical test
for fullness combines with results in a paper by Katsura,
Tomforde, West and the first named author (\cite{ektw}) in which the range of the invariants for one-ideal
graph $C^{*}$-algebras is determined, and with the classification results in \cite{semt_classgraphalg}, to lead
to the first permanence result for extensions of graph $C^{*}$-algebras.  

\begin{defin}
An extension $\mathfrak{e} : 0 \to \mathfrak{I} \to \mathfrak{A} \to \mathfrak{A}/\mathfrak{I} \to 0$ is \emph{full} if $\tau_{ \mathfrak{e} } ( a )$ is norm-full in $\corona{ \mathfrak{I}}$ for all nonzero $a \in \mathfrak{A}/\mathfrak {I}$.
\end{defin}

The following definition is taken from \cite{kgdh:stenosis}.
\begin{defin}
An extension $\mathfrak{e} : 0 \to \mathfrak{I} \overset{ \iota }{ \to  } \mathfrak{A} \overset{ \pi }{ \to } \mathfrak{B} \to 0$
is called \emph{stenotic} if $\mathfrak{D} \subseteq \iota ( \mathfrak{I} )$ or $\iota ( \mathfrak{I} ) \subseteq \mathfrak{D}$ for all ideals $\mathfrak{D}$ of $\mathfrak{A}$.
\end{defin}

Note that if $0 \to \mathfrak{I} \to \mathfrak{A} \to \mathfrak{A} / \mathfrak{I} \to 0$ is stenotic, where $\mathfrak{I}$ is a nonzero ideal of $\mathfrak{A}$, then $\mathfrak{I}$ is an essential ideal of $\mathfrak{A}$.  Also, an extension $\mathfrak{e}: 0 \to \mathfrak{I} \to \mathfrak{A} \to \mathfrak{B} \to 0$
is stenotic if and only if the extension $\mathfrak{e}^{s} : 0 \to \mathfrak{I} \otimes \K \to \mathfrak{A} \otimes \K \to \mathfrak{B} \otimes \K \to 0$ is stenotic.

We will prove, under suitable hypothesis which are satisfied by the $C^{*}$-algebras we study in this paper, that every full extension is stenotic.  To do this we need the following lemma, which is similar to that of Lemma~3.3 of \cite{HL_fullext}.  We thank Ping Wong Ng for helpful discussions.

\begin{lemma}\label{l:fullcorona}
Let $\mathfrak{A}$ be a $C^{*}$-algebra with an approximate identity consisting of projections.  If $x \in \multialg{ \mathfrak{A} \otimes \K }$ such that $\pi_{ \mathfrak{A} \otimes \K } ( x )$ is norm-full in $\corona{ \mathfrak{A} \otimes \K }$, then $x$ is norm-full in $\multialg{ \mathfrak{A} \otimes \K }$.
\end{lemma}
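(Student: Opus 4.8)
The plan is to prove that the closed two-sided ideal $I$ of $\multialg{\mathfrak{A}\otimes\K}$ generated by $x$ is the whole algebra. Write $\mathfrak{B}=\mathfrak{A}\otimes\K$ and let $\pi=\pi_{\mathfrak{A}\otimes\K}\colon \multialg{\mathfrak{B}}\to\corona{\mathfrak{B}}$ be the quotient map, whose kernel is the essential ideal $\mathfrak{B}$. The strategy is to transport the fullness of $\pi(x)$ first into the statement that $I+\mathfrak{B}$ is dense, then to identify the quotient $\multialg{\mathfrak{B}}/I$ with a quotient of the \emph{stable} algebra $\mathfrak{B}$, and finally to invoke the rigidity fact that a nonzero stable $C^{*}$-algebra is never unital.

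First I would show that $J:=\overline{I+\mathfrak{B}}$ equals $\multialg{\mathfrak{B}}$. Since $J$ is a closed ideal containing $\ker\pi=\mathfrak{B}$, its image $\pi(J)$ is a closed ideal of $\corona{\mathfrak{B}}$ containing $\pi(x)$; as $\pi(x)$ is norm-full we get $\pi(J)=\corona{\mathfrak{B}}$, and the order isomorphism between closed ideals of $\multialg{\mathfrak{B}}$ containing $\mathfrak{B}$ and closed ideals of $\corona{\mathfrak{B}}$ forces $J=\multialg{\mathfrak{B}}$. Thus $I+\mathfrak{B}$ is dense in $\multialg{\mathfrak{B}}$. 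Next I would analyze the composite $\mathfrak{B}\hookrightarrow\multialg{\mathfrak{B}}\to\multialg{\mathfrak{B}}/I$. Its kernel is $\mathfrak{B}\cap I$, so it factors as an injective $*$-homomorphism $\mathfrak{B}/(\mathfrak{B}\cap I)\hookrightarrow\multialg{\mathfrak{B}}/I$ whose range $(\mathfrak{B}+I)/I$ is a closed $C^{*}$-subalgebra (the range of a $*$-homomorphism is closed). Because $\overline{I+\mathfrak{B}}=\multialg{\mathfrak{B}}$, this range is dense and hence equal to all of $\multialg{\mathfrak{B}}/I$. Therefore $\multialg{\mathfrak{B}}/I\cong\mathfrak{B}/(\mathfrak{B}\cap I)$ is isomorphic to a quotient of $\mathfrak{A}\otimes\K$.

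To finish I would use two standard facts. Any quotient of the stable algebra $\mathfrak{A}\otimes\K$ is again stable: the closed ideals of $\mathfrak{A}\otimes\K$ are exactly those of the form $L\otimes\K$ for a closed ideal $L$ of $\mathfrak{A}$, and $(\mathfrak{A}\otimes\K)/(L\otimes\K)\cong(\mathfrak{A}/L)\otimes\K$. On the other hand $\multialg{\mathfrak{B}}$ is unital, so if $I$ were proper then $\multialg{\mathfrak{B}}/I$ would be a nonzero \emph{unital} $C^{*}$-algebra. But a nonzero stable $C^{*}$-algebra cannot be unital: it carries mutually orthogonal nonzero projections summing strictly to the identity of its multiplier algebra, which in a unital algebra — where the strict and norm topologies coincide — would force convergence in norm, an impossibility since the tails have norm one. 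Hence $\multialg{\mathfrak{B}}/I=0$, that is $I=\multialg{\mathfrak{B}}$, and $x$ is norm-full.

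The step I expect to be the crux is the identification $\multialg{\mathfrak{B}}/I\cong\mathfrak{B}/(\mathfrak{B}\cap I)$, since this is precisely what converts a question about the large algebra $\multialg{\mathfrak{B}}$ into one about the stable algebra $\mathfrak{B}=\mathfrak{A}\otimes\K$; once this is in hand, the unital-versus-stable dichotomy closes the argument immediately. I remark that this route appears to use only the stability of $\mathfrak{A}\otimes\K$ and not the hypothesis that $\mathfrak{A}$ has an approximate identity of projections, so presumably that hypothesis is kept here only to match the standing assumptions of the section; an alternative, more hands-on proof could instead build the needed elements of $I$ explicitly from such an approximate identity and the matrix units $\{e_{ij}\}$.
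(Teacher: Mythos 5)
Your proof is correct, and it takes a genuinely different route from the paper's. The paper works concretely inside $\multialg{\mathfrak{A}\otimes\K}$: norm-fullness of $\pi_{\mathfrak{A}\otimes\K}(x)$ in $\corona{\mathfrak{A}\otimes\K}$ gives $\bigl\|1-\sum_i a_i x b_i - d\bigr\|<\tfrac12$ for some $d\in\mathfrak{A}\otimes\K$; the approximate identity of projections produces a projection $e$ with $\|(1-e)d(1-e)\|<\tfrac12$, so that with $p=1-e$ the element $\sum_i p a_i x b_i p$ is invertible in the unital corner $p\multialg{\mathfrak{A}\otimes\K}p$, placing $p$ in the closed ideal $I$ generated by $x$; stability is then used concretely, via isometries $V_n$ whose range projections $1\otimes\bigl(1-\sum_{k\leq n}e_{kk}\bigr)$ tend strictly to $0$, to conjugate $p$ into elements of $I$ converging in norm to $1$. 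You instead argue structurally: the ideal-lattice correspondence under $\pi_{\mathfrak{A}\otimes\K}$ shows that $I+\mathfrak{A}\otimes\K$ is dense in $\multialg{\mathfrak{A}\otimes\K}$, the second isomorphism theorem identifies $\multialg{\mathfrak{A}\otimes\K}/I$ with a quotient of $\mathfrak{A}\otimes\K$, and since quotients of stable $C^*$-algebras are stable while a nonzero stable $C^*$-algebra is never unital, properness of $I$ would contradict the existence of a unit in $\multialg{\mathfrak{A}\otimes\K}/I$. Every standard fact you invoke (closed range of $*$-homomorphisms, ideals of $\mathfrak{A}\otimes\K$ being of the form $L\otimes\K$, strict-versus-norm convergence in a unital algebra) is correctly applied. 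Your route is shorter and proves more: as you observe, it never uses the approximate identity of projections, so the lemma in fact holds for every $C^*$-algebra $\mathfrak{A}$ — equivalently, with $\mathfrak{A}\otimes\K$ replaced by an arbitrary stable $C^*$-algebra — whereas in the paper that hypothesis is needed precisely to run the cutting-down step that tames the error term $d$. What the paper's hands-on proof buys in exchange is an explicit sequence of elements of the ideal generated by $x$ converging in norm to the identity, i.e.\ a quantitative witness of fullness, in the same style as the other corona-algebra computations in the paper.
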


\begin{proof}
Let $e_{n} = 1_{ \multialg{ \mathfrak{A} } }\otimes \left( 1 _{ \multialg{ \K } } - \sum_{ k = 1}^{n} e_{kk} \right) \in \multialg{ \mathfrak{A} \otimes \K }$.  Note that $\{ e_{n} \}_{ n = 1}^{ \infty }$ is a sequence of projections in $\multialg{ \mathfrak{A} \otimes \K }$ which converges to $0$ in the strict topology.  Moreover, $e_{n} \sim 1_{ \multialg{ \mathfrak{A} \otimes \K } }$ since $1 _{ \multialg{ \K } } - \sum_{ k = 1}^{n} e_{kk} \sim 1_{ \multialg{ \K } }$.  Thus, there exists $V_{n} \in \multialg{ \mathfrak{A} \otimes \K }$ such that $V_{n}^{*} V_{n} = 1_{ \multialg{ \mathfrak{A} \otimes \K } }$ and $V_{n} V_{n}^{*} = e_{n}$. 

Suppose $x \in \multialg{ \mathfrak{A} \otimes \K }$ such that $\pi_{ \mathfrak{A} \otimes \K } (x)$ is norm-full in $\corona{ \mathfrak{A} \otimes \K }$.  Then there exist $a_{i}, b_{i} \in \multialg{ \mathfrak{A} \otimes \K }$ such that 
\begin{align*}
\left\| \pi_{ \mathfrak{A} \otimes \K } ( 1_{ \multialg{ \mathfrak{A} \otimes \K } }  ) - \sum_{ i = 1}^{m} \pi_{ \mathfrak{A} \otimes \K } ( a_{i} x b_{i} ) \right\| < \frac{1}{2}.
\end{align*}
Thus,
\begin{align*}
\left\|1_{ \multialg{ \mathfrak{A} \otimes \K } }  - \sum_{ i = 1}^{m} a_{i} x b_{i} - d \right\| < \frac{1}{2} 
\end{align*}
for some $d \in \mathfrak{A} \otimes \K$.  Since $\mathfrak{A} \otimes \K$ has an approximate identity consisting of projections, there exists a projection $e \in \mathfrak{A} \otimes \K$ such that $\left\| ( 1_{ \multialg{ \mathfrak{A} \otimes \K } } - e ) d ( 1_{ \multialg{ \mathfrak{A} \otimes \K }} - e ) \right\| < \frac{1}{2}$.  Set $p = 1_{ \multialg{ \mathfrak{A} \otimes \K } } - e$.  Hence,
\begin{align*}
\left\| p - \sum_{ i = 1}^{m} p a_{i} x b_{i} p \right\|  &\leq \left\| p - \sum_{ i = 1}^{m} p a_{i} x b_{i} p - p d p \right\| + \left\| p d p \right\|. \\
&< 1.
\end{align*}
Thus, $\sum_{ i = 1}^{m} p a_{i} x b_{i} p$ is an invertible element in $p \multialg{ \mathfrak{A} \otimes \K } p$, i.e., there exists $y \in p \multialg{ \mathfrak{A} \otimes \K } p$ such that 
\begin{align*}
y\left(\sum_{ i = 1}^{m} pa_{i} x b_{i} p\right) = p.
\end{align*}

Since $\{ e_{n} \}_{ n= 1}^{ \infty }$ converges to $0$ in the strict topology and $e \in \mathfrak{A} \otimes \K$, 
\begin{align*}
\lim_{n \to \infty }\| 1_{ \multialg{ \mathfrak{A} \otimes \K } } - V_{n}^{*} p V_{n} \| &= \lim_{n \to \infty }\| 1_{ \multialg{ \mathfrak{A} \otimes \K } } - V_{n}^{*} ( 1_{ \multialg{ \mathfrak{A} \otimes \K } } - e ) V_{n} \| \\
		&= \lim_{ n \to \infty }\| V_{n}^{*} e V_{n} \|\\
		&= \lim_{ n \to \infty } \| e V_{n} \|^{2} \\
		&= \lim_{ n \to \infty } \| e e_{n} e \| \\
		&= 0. 
\end{align*}
Since
\begin{align*}
\left\| 1_{ \multialg{ \mathfrak{A} \otimes \K } } - V_{n}^{*} y\left(\sum_{ i = 1}^{m} p a_{i} x b_{i} p\right) V_{n} \right\| = \left\|  1_{ \multialg{ \mathfrak{A} \otimes \K } } - V_{n}^{*} p V_{n} \right\|, 
\end{align*}
we have that 
\begin{align*}
\lim_{ n \to \infty } \left\| 1_{ \multialg{ \mathfrak{A} \otimes \K } } - V_{n}^{*} y\left(\sum_{ i = 1}^{m} pa_{i} x b_{i} p\right) V_{n} \right\|  = 0.
\end{align*}
Therefore, $x$ is norm-full in $\multialg{ \mathfrak{A} \otimes \K }$.
\end{proof}

\begin{propo}\label{p:sepid}
Let $\mathfrak{A}$ be a $C^{*}$-algebra and let $\mathfrak{I}$ be an ideal of $\mathfrak{A}$ such that $\mathfrak{I}$ is stable and $\mathfrak{I}$ has an approximate identity consisting of projections.  Suppose
\begin{equation*}
\mathfrak{e} : \ 0 \to \mathfrak{I} \to \mathfrak{A} \to \mathfrak{A} / \mathfrak{I} \to 0
\end{equation*}
is a full extension.  Then $\mathfrak{e}$ is stenotic.  Consequently, if $\mathfrak{A} / \mathfrak{I}$ is simple, then $\mathfrak{I}$ is the largest proper ideal of $\mathfrak{A}$.
\end{propo}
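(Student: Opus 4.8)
The plan is to verify the defining condition of stenoticity directly: for an arbitrary ideal $\mathfrak{T}$ of $\mathfrak{A}$ I must show that $\mathfrak{T} \subseteq \mathfrak{I}$ or $\mathfrak{I} \subseteq \mathfrak{T}$. If $\mathfrak{T} \subseteq \mathfrak{I}$ there is nothing to prove, so I would assume $\mathfrak{T} \not\subseteq \mathfrak{I}$ and aim to establish $\mathfrak{I} \subseteq \mathfrak{T}$. (The case $\mathfrak{I} = 0$ is trivial, and for $\mathfrak{I} \neq 0$ stability forces $\mathfrak{I}$ to be non-unital, so $\corona{\mathfrak{I}} \neq 0$ and fullness is not vacuous.) Writing $\pi \colon \mathfrak{A} \to \mathfrak{A}/\mathfrak{I}$ for the quotient map, the hypothesis $\mathfrak{T} \not\subseteq \mathfrak{I}$ means $\pi(\mathfrak{T}) \neq 0$, so I may choose a positive $t \in \mathfrak{T}$ with $\pi(t) \neq 0$.

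The key transfer step is to push fullness from the corona to the multiplier algebra. Since $\sigma_{\mathfrak{e}}$ extends the canonical embedding $\theta_{\mathfrak{I}}$, the element $\sigma_{\mathfrak{e}}(t) \in \multialg{\mathfrak{I}}$ satisfies $\pi_{\mathfrak{I}}(\sigma_{\mathfrak{e}}(t)) = \tau_{\mathfrak{e}}(\pi(t))$, which is norm-full in $\corona{\mathfrak{I}}$ by fullness together with $\pi(t) \neq 0$. Because $\mathfrak{I}$ is stable (so that $\mathfrak{I} \cong \mathfrak{I} \otimes \K$) and has an approximate identity of projections, Lemma~\ref{l:fullcorona} applies and yields that $\sigma_{\mathfrak{e}}(t)$ is itself norm-full in $\multialg{\mathfrak{I}}$. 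I regard this as the crux of the argument: fullness is a statement inside $\corona{\mathfrak{I}}$, whereas the ideals $\mathfrak{T}$ we are testing live at the level of $\mathfrak{A} \subseteq \multialg{\mathfrak{I}}$, and Lemma~\ref{l:fullcorona} is precisely what bridges the two.

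It then remains to deduce $\mathfrak{I} \subseteq \mathfrak{T}$ from the norm-fullness of $\sigma_{\mathfrak{e}}(t)$, and I would do this by showing every positive $i \in \mathfrak{I}$ lies in $\mathfrak{T}$. Since $\multialg{\mathfrak{I}}$ is unital and $\sigma_{\mathfrak{e}}(t)$ is norm-full, the unit is approximated by finite sums $\sum_k m_k \sigma_{\mathfrak{e}}(t) n_k$ with $m_k, n_k \in \multialg{\mathfrak{I}}$; multiplying on the left by $\theta_{\mathfrak{I}}(i^{1/2})$ produces approximations of $\theta_{\mathfrak{I}}(i^{1/2})$ by sums of the terms $\theta_{\mathfrak{I}}(i^{1/2}) m_k \sigma_{\mathfrak{e}}(t) n_k$. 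The essential observation is that each such term lies in $\mathfrak{T} \cap \mathfrak{I}$: writing $\theta_{\mathfrak{I}}(i^{1/2}) m_k = \theta_{\mathfrak{I}}(b_k)$ with $b_k \in \mathfrak{I}$ (the ideal $\mathfrak{I}$ absorbs multipliers), I get $\theta_{\mathfrak{I}}(b_k) \sigma_{\mathfrak{e}}(t) = \sigma_{\mathfrak{e}}(b_k t) = \theta_{\mathfrak{I}}(b_k t)$ with $b_k t \in \mathfrak{T} \cap \mathfrak{I}$, and right multiplication by $n_k$ keeps the result there. At this point I rely on the standard fact, which I would record as a small auxiliary lemma, that a closed ideal of $\mathfrak{I}$ is invariant under left and right multiplication by $\multialg{\mathfrak{I}}$ (approximate an ideal element $k$ by $u_\lambda k$ for an approximate unit $u_\lambda$ of the ideal). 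Passing to the limit and using that $\mathfrak{T} \cap \mathfrak{I}$ is closed gives $i^{1/2} \in \mathfrak{T} \cap \mathfrak{I}$, hence $i = (i^{1/2})^2 \in \mathfrak{T}$; since positive elements span $\mathfrak{I}$, this proves $\mathfrak{I} \subseteq \mathfrak{T}$, and hence stenoticity. Finally, when $\mathfrak{A}/\mathfrak{I}$ is simple the only ideals containing $\mathfrak{I}$ are $\mathfrak{I}$ and $\mathfrak{A}$, so stenoticity forces every proper ideal to sit inside $\mathfrak{I}$, i.e.\ $\mathfrak{I}$ is the largest proper ideal. The only delicate bookkeeping is the multiplier-absorption in this last paragraph; the real content of the result is the corona-to-multiplier passage provided by Lemma~\ref{l:fullcorona}.
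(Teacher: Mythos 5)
Your proposal is correct and takes essentially the same route as the paper: reduce to showing $\mathfrak{I}\subseteq\mathfrak{T}$ when $\mathfrak{T}\not\subseteq\mathfrak{I}$, use Lemma~\ref{l:fullcorona} to promote norm-fullness of $\tau_{\mathfrak{e}}(\pi(t))$ in $\corona{\mathfrak{I}}$ to norm-fullness of $\sigma_{\mathfrak{e}}(t)$ in $\multialg{\mathfrak{I}}$, and then cut the resulting approximation down into the ideal. The only (immaterial) difference is the cut-down step: the paper compresses $\sum_i x_i\sigma_{\mathfrak{e}}(t)y_i$ on both sides by an element $e\in\mathfrak{I}$ with $ebe\approx b$ and concludes via injectivity of $\sigma_{\mathfrak{e}}$, whereas you approximate the unit, multiply on the left by $\theta_{\mathfrak{I}}(i^{1/2})$, and absorb the right-hand multiplier factors using the (correctly justified) invariance of closed ideals of $\mathfrak{I}$ under multiplication by $\multialg{\mathfrak{I}}$.
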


\begin{proof}
Let $\mathfrak{D}$ be an ideal of $\mathfrak{A}$.   Suppose $\mathfrak{D}$ is not a subset of $\mathfrak{I}$.  Hence, there exists $a \in \mathfrak{D} \setminus \mathfrak{I}$.  Note that we may assume that $a \in \mathfrak{A}_{+}$.  Since $\mathfrak{e}$ is a full extension and $a \notin \mathfrak{I}$, $\tau_{ \mathfrak{e} } ( \pi_{ \mathfrak{I} , \mathfrak{A} } ( a ) )$ is norm-full in $\corona{ \mathfrak{I} }$.  By Lemma~\ref{l:fullcorona}, $\sigma_{ \mathfrak{e} } ( a )$ is norm-full in $\multialg{ \mathfrak{I} }$.

Let $b \in \mathfrak{I}$ and let $\epsilon > 0$.  Then there exist $x_{i}, y_{i} \in \multialg{ \mathfrak{I} }$ such that 
\begin{equation*}
\left\| \theta_{ \mathfrak{I} } ( b ) - \sum_{ i = 1}^{n} x_{i}  \sigma_{\mathfrak{ e } } ( a ) y_{i}  \right\| < \frac{\epsilon}{2}.
\end{equation*}
Choose $e \in \mathfrak{I}$ such that $\| \theta_{ \mathfrak{I} } ( b ) - \theta_{ \mathfrak{I} } ( e b e ) \| < \frac{\epsilon}{2}$ and $\| e \| \leq 1$.  Then 
\begin{equation*}
\left\| \theta_{ \mathfrak{I} } ( b ) - \sum_{ i = 1}^{n} \sigma_{\mathfrak{ e } } ( e ) x_{i} \sigma_{ \mathfrak{e} } ( a ) y_{i} \sigma_{ \mathfrak{e} } (e ) \right\| < \epsilon.
\end{equation*}
Note that there exist $\alpha_{i}, \beta_{i } \in \mathfrak{I}$ such that $\sigma_{ \mathfrak{e} } ( e ) x_{i} = \theta_{ \mathfrak{I} } ( \alpha_{i} ) = \sigma_{ \mathfrak{e} } ( \alpha_{i} )$ and $y_{i}  \sigma_{ \mathfrak{e} } ( e ) = \theta_{ \mathfrak{I} } ( \beta_{i} ) = \sigma_{ \mathfrak{e} } ( \beta_{i} )$.  Hence, 
\begin{equation*}
\left\| \sigma_{ \mathfrak{e} } ( b ) - \sum_{ i = 1}^{n} \sigma_{\mathfrak{ e } } ( \alpha_{i} a \beta_{i} ) \right\| < \epsilon.
\end{equation*}
Since $\sigma_{ \mathfrak{e} }$ is an injective $*$-homomorphism, 
\begin{equation*}
\left\|  b  - \sum_{ i = 1}^{n} \alpha_{i} a \beta_{i} \right\| < \epsilon.
\end{equation*}
Therefore, $b$ is in the ideal of $\mathfrak{A}$ generated by $a$.  Since $a \in \mathfrak{D}$ and $\mathfrak{D}$ is an ideal of $\mathfrak{A}$, $b \in \mathfrak{D}$.  Thus, $\mathfrak{I} \subseteq \mathfrak{D}$.
\end{proof}

\begin{defin}
Let $\mathfrak{A}$ be a $C^{*}$-algebra.  We say that $\mathfrak{A}$ has \emph{cancellation of projections} if for each $n,m, k \in \N$ and for all projections $p \in \mathsf{M}_{n} ( \mathfrak{A} )$, $q \in \mathsf{M}_{m} ( \mathfrak{A} )$, and $e \in \mathsf{M}_{k} ( \mathfrak{A} )$,
\begin{align*}
p \oplus e \sim q \oplus e \quad \implies \quad p \sim q.
\end{align*}
\end{defin}

By Corollary~6.5.7 of \cite{blackadarB}, if $\mathfrak{A}$ has real rank zero, then $\mathfrak{A}$ has cancellation of projections if and only if $\mathfrak{A}$ has stable rank one.  Thus, if $\mathfrak{A}$ has real rank zero and $\mathfrak{A}$ has cancellation of projections, then $K_{0} ( \mathfrak{A} )_{+} \neq K_{0} ( \mathfrak{A} )$.  

\begin{lemma}\label{l:str1}
Let $\mathfrak{A}$ be a $C^{*}$-algebra with cancellation of projections.  If $\mathfrak{I}$ is an ideal of $\mathfrak{A}$ and $p$ and $q$ are projections in $\mathfrak{I}$, then $[ \iota_{\mathfrak{I} , \mathfrak{A} } (p) ] \leq [ \iota_{ \mathfrak{I} , \mathfrak{A} } (q) ]$ in $K_{0} ( \mathfrak{A} )$ if and only if $p \lesssim q$ in $\mathfrak{I}$.  Consequently, if the real rank of $\mathfrak{A}$ is zero, then $K_{0} ( \iota_{ \mathfrak{I}, \mathfrak{A} } )$ is injective and $x \in K_{0} ( \mathfrak{I} )_{+}$ whenever $K_{0} ( \iota_{\mathfrak{I}, \mathfrak{A} } ) (x) \in K_{0} ( \mathfrak{A} )_{+}$.  
\end{lemma}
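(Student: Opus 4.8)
The plan is to establish the stated equivalence first and then read off the two consequences, abbreviating $\iota = \iota_{\mathfrak{I},\mathfrak{A}}$ and regarding $p,q$ as projections in some $\mathsf{M}_n(\mathfrak{I})$, identified with their images $\iota(p),\iota(q)\in\mathsf{M}_n(\mathfrak{A})$. The structural fact I will use repeatedly is that, because $\mathfrak{I}$ is an ideal, any projection $e\in\mathsf{M}_N(\mathfrak{A})$ with $e\leq\iota(q)$ satisfies $e=\iota(q)\,e\,\iota(q)\in\mathsf{M}_N(\mathfrak{I})$, and any $w\in\mathsf{M}_N(\mathfrak{A})$ of the form $w=w'P$ with $P\in\mathsf{M}_N(\mathfrak{I})$ lies in $\mathsf{M}_N(\mathfrak{I})$. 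The implication ``$p\lesssim q$ in $\mathfrak{I}\Rightarrow[\iota(p)]\leq[\iota(q)]$'' is the easy one and uses no cancellation: if $p\sim e$ in $\mathfrak{I}$ with $e\leq q$, then $q-e$ is a projection in $\mathsf{M}_n(\mathfrak{I})$ and $[\iota(q)]-[\iota(p)]=[\iota(q-e)]\in K_0(\mathfrak{A})_+$.

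For the converse, assume $[\iota(p)]\leq[\iota(q)]$. By definition of the positive cone there is a projection $r\in\mathsf{M}_m(\mathfrak{A})$ with $[\iota(q)]=[\iota(p)]+[r]=[\iota(p)\oplus r]$ in $K_0(\mathfrak{A})$. Using the standard picture of $K_0$, equality of these two classes of genuine projections over $\mathfrak{A}$ produces a projection $s$ over $\mathfrak{A}$ with $\iota(p)\oplus r\oplus s\sim\iota(q)\oplus s$ in some $\mathsf{M}_N(\mathfrak{A})$. This is where cancellation of projections enters: cancelling $s$ gives $\iota(p)\oplus r\sim\iota(q)$ over $\mathfrak{A}$, say $V^*V=\iota(p)\oplus r$ and $VV^*=\iota(q)$. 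Putting $P=\iota(p)\oplus 0\leq V^*V$, $w=VP$ and $e=ww^*$, one computes $w^*w=P$ and $e=VPV^*\leq VV^*=\iota(q)$. Since $P\in\mathsf{M}_N(\mathfrak{I})$, both $w=VP$ and $e=ww^*$ lie in $\mathsf{M}_N(\mathfrak{I})$; hence $p\sim e$ in $\mathfrak{I}$ with $e\leq q$, that is, $p\lesssim q$ in $\mathfrak{I}$.

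The two consequences follow from the same engine. For injectivity, take $x\in K_0(\mathfrak{I})$ with $K_0(\iota)(x)=0$ and write $x=[p]-[q]$ with $p,q$ projections over $\mathfrak{I}$; then $[\iota(p)]=[\iota(q)]$, so the standard picture together with cancellation yields $\iota(p)\sim\iota(q)$ over $\mathfrak{A}$ via some $V$ with $V^*V=\iota(p)$, $VV^*=\iota(q)$. As $V=\iota(q)V\iota(p)$ and $\mathfrak{I}$ is an ideal, $V\in\mathsf{M}_N(\mathfrak{I})$, whence $p\sim q$ in $\mathfrak{I}$ and $x=0$. For the final assertion, if $K_0(\iota)(x)\in K_0(\mathfrak{A})_+$ then, writing $x=[p]-[q]$, we get $[\iota(q)]\leq[\iota(p)]$, so by the equivalence just proved $q\lesssim p$ in $\mathfrak{I}$; choosing $e\leq p$ in $\mathsf{M}_n(\mathfrak{I})$ with $q\sim e$ in $\mathfrak{I}$ gives $x=[p]-[e]=[p-e]\in K_0(\mathfrak{I})_+$.

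I expect the main obstacle to be exactly the passage from an equality of $K_0$-classes to an honest Murray--von Neumann equivalence over $\mathfrak{A}$ that can subsequently be de-stabilised: the standard picture supplies a stabilising projection, and it is precisely the hypothesis of cancellation of projections that permits its removal, after which the ideal property transfers everything into $\mathfrak{I}$ almost for free. The delicate point within this step is that the stabilising projection must be taken over $\mathfrak{A}$ itself rather than over its unitisation, which is where the ambient assumption that $\mathfrak{A}$ has an approximate unit of projections (as in the real rank zero setting of interest) is used.
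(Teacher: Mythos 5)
First, a remark on the comparison itself: the paper states Lemma~\ref{l:str1} \emph{without} proof (it is presented as well known), so there is no in-paper argument to measure yours against; I can only assess your proposal on its own terms.

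Your proof of the displayed equivalence is the standard argument and is essentially correct, including the ideal-absorption computations ($w=VP$ and $e=ww^{*}$ landing in matrices over $\mathfrak{I}$), and your closing diagnosis is accurate as far as it goes: with the paper's definition of cancellation (which only permits cancelling projections over $\mathfrak{A}$ itself), the stabilising projection produced by the standard picture is a priori a unit $1_{k}$ over $\widetilde{\mathfrak{A}}$, so one genuinely needs $\mathfrak{A}$ unital, or with an approximate unit of projections, to replace it by a projection over $\mathfrak{A}$ before cancellation can be invoked.

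The genuine gap is in the ``Consequently'' part, and it sits at the ideal, not at $\mathfrak{A}$. Both consequences open with ``write $x=[p]-[q]$ with $p,q$ projections over $\mathfrak{I}$''; this representation requires $\mathfrak{I}$ itself to have enough projections (e.g.\ real rank zero, via Lemma~\ref{l:kthyrr0} applied to $\mathfrak{I}$), and this is \emph{not} supplied by your flagged ambient assumption on $\mathfrak{A}$, since approximate units of projections do not pass to ideals. Concretely, let $\mathfrak{A}=C(\overline{\mathbb{D}})$ be the continuous functions on the closed unit disk and $\mathfrak{I}=C_{0}(\mathbb{D})$ the ideal of functions vanishing on the boundary circle. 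Then $\mathfrak{A}$ is unital and has cancellation of projections (every complex vector bundle over the contractible space $\overline{\mathbb{D}}$ is trivial), so your hypotheses --- and your proof of the equivalence --- apply, and the equivalence holds (vacuously, as $M_{n}(\mathfrak{I})$ contains no nonzero projections). Yet $K_{0}(\mathfrak{I})\cong\mathbb{Z}$, generated by the Bott element $\beta$, while $K_{0}(\iota_{\mathfrak{I},\mathfrak{A}})$ is the zero map $\mathbb{Z}\to\mathbb{Z}$, because the Bott line bundle pulls back to a trivial bundle on the disk. Hence $K_{0}(\iota_{\mathfrak{I},\mathfrak{A}})$ is not injective, and $\beta$ satisfies $K_{0}(\iota_{\mathfrak{I},\mathfrak{A}})(\beta)=0\in K_{0}(\mathfrak{A})_{+}$ although $\beta\notin K_{0}(\mathfrak{I})_{+}=\{0\}$: both consequences fail. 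So they are not consequences of the equivalence alone (indeed the lemma as literally stated is false); the implicit hypothesis the paper relies on is that $\mathfrak{A}$ has real rank zero, whence $\mathfrak{I}$ has real rank zero as well, and Lemma~\ref{l:kthyrr0} applied to $\mathfrak{I}$ then supplies exactly the representation $x=[p]-[q]$ your argument uses. Your caveat about approximate units of projections should therefore be attached to $\mathfrak{I}$ (or upgraded to the full real-rank-zero standing assumption), not to $\mathfrak{A}$; with that correction the rest of your derivation of the two consequences goes through as written.
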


The following definition is a slight modification of the definition of \emph{lexicographic ordering} given in \cite{dh:extAF}.  In \cite{dh:extAF}, lexicographic ordering referred to the ordering of an exact sequence of dimension groups.  We extend the definition for general pre-ordered groups.  

\begin{defin}
Let $(G, G_{+} )$, $( H, H_{+})$, and $( K, K_{+} )$ be pre-ordered groups.  Let $\ftn{ \alpha }{ H }{ G }$ and $\ftn{ \beta }{ G }{ K }$ be group homomorphisms.  The sequence of pre-ordered groups
\begin{align}\label{exordered}
( H , H_{+} ) \overset{ \alpha }{ \to } ( G , G_{+} ) \overset{ \beta }{ \to } (K, K_{+} )
\end{align}
is \emph{exact} if $\alpha( H_{+} ) \subseteq G_{+}$, $\beta( G_{+} ) \subseteq K_{+}$, and $H \overset{ \alpha }{ \to } G \overset{ \beta }{ \to } K$ is an exact sequence of groups.  We say that the exact sequence in (\ref{exordered}) of pre-ordered groups is \emph{lexicographic} if
\begin{align*}
G_{+} = \beta^{-1} ( K_{+} \setminus \{ 0 \} ) \sqcup \alpha (  H_{+} ).
\end{align*}
Note that the union is always disjoint.
\end{defin}

\begin{defin}\label{d:klexi}
For a $C^{*}$-algebra $\mathfrak{A}$, set 
\begin{align*}
K_{0} ( \mathfrak{A} )_{+} =  \setof{ [ p ] }{ \text{$p$ is a projection in $\mathsf{M}_{n} ( \mathfrak{A} )$ for some $n \in \N$} }
\end{align*}
and
\begin{align*}
K_{0} ( \mathfrak{A} )_{++} = \setof{ [ p ] }{ \text{$p$ is a norm-full projection in $\mathfrak{A}$} }.
\end{align*}

Let $\mathfrak{A}$ be a $C^{*}$-algebra and let $\mathfrak{I}$ be an ideal of $\mathfrak{A}$.  Let 
\begin{align*}
\mathfrak{e} : 0 \to \mathfrak{I} \to  \mathfrak{A} \to \mathfrak{A} / \mathfrak{I} \to 0
\end{align*}   
be an extension of $C^{*}$-algebras.  We say that $\mathfrak{e}$ is \emph{$K$-lexicographic} if
\begin{itemize}
\item[(1)] the exact sequence of pre-ordered groups
\begin{align*}
( K_{0} ( \mathfrak{I} ) , K_{0} ( \mathfrak{I} )_{+} ) \to  ( K_{0} ( \mathfrak{A} ) , K_{0} ( \mathfrak{A} )_{+} ) \to ( K_{0} ( \mathfrak{A} / \mathfrak{I} ) , K_{0} ( \mathfrak{A} / \mathfrak{I} )_{+} )
\end{align*}
induced by $\mathfrak{e}$ is lexicographic whenever $K_{0} ( \mathfrak{A} / \mathfrak{I} ) \neq K_{0} ( \mathfrak{A} / \mathfrak{I} )_{+}$ and

\item[(2)] $K_{0} ( \mathfrak{A} ) = K_{0} ( \mathfrak{A} )_{+} = K_{0} ( \mathfrak{A} )_{++}$ whenever $K_{0} ( \mathfrak{A} / \mathfrak{I} ) = K_{0} ( \mathfrak{A} / \mathfrak{I} )_{+} = K_{0} ( \mathfrak{A} / \mathfrak{I} )_{++}$.
\end{itemize}
\end{defin}

\begin{defin}
A $C^{*}$-algebra $\mathfrak{A}$ is said to satisfy the \emph{corona factorization property} if $P \sim 1_{ \multialg{ \mathfrak{A} \otimes \K } }$ for all norm-full projection $P \in \multialg{ \mathfrak{A} \otimes \K }$.
\end{defin}

The following results are due to Kucerovsky
and Ng (see \cite{NgCFP} and \cite{KucNgCFPdef}).  They show that many $C^{*}$-algebras satisfy the corona factorization property.  

\begin{theor}
Let $\mathfrak{A}$ be a unital separable simple \cstar-algebra.

\begin{enumerate}

\item If $\mathfrak{A}$ is exact, $\mathfrak{A}$ has real rank zero and stable rank one, and $K_{0} ( \mathfrak{A} )$ is weakly unperforated, then $\mathfrak{A} \otimes \K$ has the corona factorization property.

\item If $\mathfrak{A}$ is purely infinite, then $\mathfrak{A} \otimes \K$ has the corona factorization property.
\end{enumerate}
\end{theor}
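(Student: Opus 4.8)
The plan is to funnel both statements through Lemma~\ref{lem:projfull}, reducing everything to a single comparison statement for projections in the multiplier algebra. Write $B = \mathfrak{A} \otimes \K$; since $\mathfrak{A}$ is simple, so is $B$, and since $\K \otimes \K \cong \K$ we have $B \otimes \K \cong B$, so unravelling the definition shows that ``$\mathfrak{A}\otimes\K$ has the corona factorization property'' means exactly that every norm-full projection $P \in \multialg{B}$ satisfies $P \sim 1_{\multialg{B}}$. In case (1) the hypotheses grant that $\mathfrak{A}$ has real rank zero, while in case (2) real rank zero holds automatically because $\mathfrak{A}$ is purely infinite and simple; in either case $\mathfrak{A}$ is separable and nonzero, so Lemma~\ref{lem:projfull} is available with its ambient algebra taken to be our $\mathfrak{A}$. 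Fix a norm-full projection $P \in \multialg{B}$. Because $P$ is norm-full it is nonzero, and since $B$ is an essential ideal of $\multialg{B}$ no nonzero multiplier is orthogonal to $B$, so $PBP \neq 0$; being a nonzero hereditary sub-\cstar-algebra of the simple algebra $B$, $PBP$ is automatically norm-full in $B$, and the corner hypothesis of Lemma~\ref{lem:projfull} is free. The whole problem therefore reduces to verifying the comparison hypothesis: for all projections $e, f \in B$ with $f \leq P$, one has $e \lesssim P - f$ in $\multialg{B}$.

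First I would record two structural facts about $Q := P - f$. On the one hand $Q$ is again norm-full in $\multialg{B}$: proper closed ideals of $\corona{B}$ pull back to proper closed ideals of $\multialg{B}$ containing $B$, so norm-fullness of $P$ forces its image in $\corona{B}$ to be norm-full there, and since $f \in B$ the images of $P$ and $Q$ in $\corona{B}$ coincide; Lemma~\ref{l:fullcorona} then returns that $Q$ is norm-full in $\multialg{B}$. On the other hand $Q \notin B$: were $Q \in B$, then $P = f + Q \in B$, contradicting that the norm-full $P$ avoids the proper ideal $B$. Thus in both cases the task is to show that a \emph{norm-full projection $Q \in \multialg{B} \setminus B$ dominates every projection $e$ of $B$}.

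In case (2) this is immediate from pure infiniteness, and I would not even need the lemma. Here $B$ is stable, simple, and purely infinite, so the non-unital, $\sigma$-unital hereditary sub-\cstar-algebra $QBQ$ is itself simple and purely infinite, hence stable by Zhang's theorem. Identifying $\multialg{QBQ}$ with the corner $Q\multialg{B}Q$, stability of $QBQ$ makes $Q = 1_{\multialg{QBQ}}$ a properly infinite projection of $\multialg{B}$, and it is also full there; a properly infinite full projection in a unital \cstar-algebra is equivalent to the unit, so $Q \sim 1_{\multialg{B}}$ and in particular $e \lesssim Q$. (Taking $f = 0$ already yields $P \sim 1_{\multialg{B}}$ directly; this is simply Lemma~\ref{lem:projfull} with the Hjelmborg--R{\o}rdam stability input coming for free.)

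In case (1) the comparison is the heart of the matter, and this is where I expect the real work. The guiding idea is that $Q \notin B$ should mean $Q$ has ``infinite rank'': since $QBQ$ is a non-unital, $\sigma$-unital, simple, real-rank-zero hereditary sub-\cstar-algebra, it carries a strictly increasing approximate unit of projections and hence contains an infinite orthogonal family of nonzero projections lying under $Q$ in $B$. Using exactness together with weak unperforation of $K_0(\mathfrak{A})$ and stable rank one --- equivalently, the cancellation and strict comparison of projections that these hypotheses provide (compare Lemma~\ref{l:str1}) --- I would then argue that finitely many members of this family already dominate the fixed projection $e$, giving $e \lesssim Q$ in $\multialg{B}$, after which Lemma~\ref{lem:projfull} upgrades this to $P \sim 1_{\multialg{B}}$. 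The delicate point, and the main obstacle, is to make precise that the ``rank'' of the non-compact projection $Q$ (the value of each densely defined dimension function) genuinely exceeds that of the compact projection $e$, and to transfer this from a comparison inside $B$ to an honest subequivalence inside $\multialg{B}$, where traces need not extend. This quantitative comparison is exactly what is carried out by Kucerovsky and Ng in \cite{NgCFP} and \cite{KucNgCFPdef}, and granting it the proof is complete.
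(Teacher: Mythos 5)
The first thing to say is that the paper contains no proof of this theorem to compare against: it is quoted as a known result of Kucerovsky and Ng, with \cite{NgCFP} and \cite{KucNgCFPdef} cited in place of an argument. So your attempt can only be judged on its own merits. Your general reduction is sound: the identification $(\mathfrak{A}\otimes\K)\otimes\K\cong\mathfrak{A}\otimes\K$, the norm-fullness of $PBP$ in $B$ (simplicity of $B$ plus essentiality of $B$ in $\multialg{B}$), and the two structural facts about $Q=P-f$ (norm-fullness in $\multialg{B}$ via Lemma~\ref{l:fullcorona}, and $Q\notin B$) are all correct. Case (2) is moreover a complete standalone proof --- the standard one: $QBQ$ is a non-unital, $\sigma$-unital, simple, purely infinite hereditary sub-\cstar-algebra, hence stable by Zhang's dichotomy, so $Q$ is a full, properly infinite projection in $\multialg{B}$. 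One caveat: your closing claim that a full properly infinite projection in a unital \cstar-algebra is equivalent to the unit is false in general (in $\mathcal{O}_\infty$ there are full properly infinite projections of every $K_0$-class); you also need $[Q]=[1_{\multialg{B}}]$, which here comes for free because $K_0(\multialg{B})=0$ when $B$ is stable and $\sigma$-unital.

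Case (1), however, has a genuine gap. The statement you reduce to --- every norm-full projection $Q\in\multialg{B}\setminus B$ dominates every projection of $B$ --- is, combined with Lemma~\ref{lem:projfull}, essentially equivalent to the corona factorization property itself, and you ``grant'' it by citing \cite{NgCFP} and \cite{KucNgCFPdef}, the very papers the theorem is attributed to; as an independent proof this is circular (though, to be fair, it is no less than what the paper itself offers). More importantly, the mechanism you sketch cannot close this gap: an infinite orthogonal family of nonzero projections under $Q$, plus cancellation and weak unperforation, does not force finitely many members of the family to dominate $e$. Take $B=\mathsf{M}_{2^\infty}\otimes\K$ with trace $\tau$ and let $Q=\sum_i q_i\otimes e_{ii}$ (strict convergence) with $\tau(q_i)=2^{-i}$; then $Q\notin B$, $QBQ$ is non-unital of real rank zero with an infinite orthogonal family of subprojections, yet $\overline{\tau}(Q)=1$, so $Q$ dominates no projection of trace larger than $1$ --- this is exactly the continuous-scale phenomenon of Example~\ref{examplegraph}. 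What excludes such $Q$ is norm-fullness: a full projection must avoid the proper finite-trace ideal of $\multialg{B}$ described in \cite{mr_ideal}, hence has infinite value under every densely defined dimension function. Converting that fact into the subequivalence $e\lesssim Q$ inside $\multialg{B}$ (using strict comparison coming from exactness and weak unperforation, together with real rank zero) is precisely the content of the Kucerovsky--Ng argument, and it is the step missing from your proposal; your sketch, as written, never uses fullness after establishing it, and any correct argument must.
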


\begin{defin}
A projection $p \in \mathfrak{A}$ is called \emph{properly infinite} if $p \oplus p \lesssim p$.
\end{defin}

We will make use of the following proposition which is contained in the proof of Proposition~1.5 of \cite{kthypureinf}.  See Theorem~7 and Lemma~8 of \cite{segr} for details.

\begin{propo}\label{p:cuntz}
Let $\mathfrak{A}$ be a $C^{*}$-algebra with a norm-full properly infinite projection.  Then $K_{0} ( \mathfrak{A} ) = K_{0} ( \mathfrak{A} )_{+} = K_{0} ( \mathfrak{A} )_{++}$.  Moreover, if $p$ and $q$ are norm-full properly infinite projections, then there exists a projection $e \in \mathfrak{A}$ such that $p \sim e < q$.
\end{propo}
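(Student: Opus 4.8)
The plan is to exploit the absorbing behaviour of the properly infinite projection $p$ together with its fullness, and to reduce everything to one simple $K$-theoretic identity. First I would record the basic toolkit. From $p \oplus p \lesssim p$ an easy induction gives $p^{\oplus n} \lesssim p$ for every $n$. Fullness of $p$ means that every projection $r$ in $\mathsf{M}_{n}(\mathfrak{A})$ lies in the closed ideal generated by $p$; a standard approximation (together with the fact that Cuntz subequivalence and Murray--von Neumann subequivalence agree for projections) then yields $r \lesssim p^{\oplus k}$ for some $k$, hence $r \lesssim p$. Thus every projection over $\mathfrak{A}$ is equivalent to a subprojection of $p$. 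Finally, proper infiniteness supplies $s_{1},s_{2}$ with $s_{i}^{*}s_{i}=p$ and $p_{1}+p_{2}\le p$, where $p_{i}:=s_{i}s_{i}^{*}$; setting $r_{0}:=p-p_{1}-p_{2}$ one obtains orthogonal projections with $p=p_{1}+p_{2}+r_{0}$ and $p_{1}\sim p_{2}\sim p$, so that in $K_{0}(\mathfrak{A})$ the relation $[p]=2[p]+[r_{0}]$ yields the crucial identity $[r_{0}]=-[p]$.

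For $K_{0}(\mathfrak{A})=K_{0}(\mathfrak{A})_{+}$, I would take $x\in K_{0}(\mathfrak{A})$ and (see the obstacle below) write $x=[e]-[f]$ with $e,f$ projections over $\mathfrak{A}$. By the toolkit I may assume $e\le p_{2}$ and $f\le p_{1}$. Then $q:=e+(p_{1}-f)+r_{0}\le p$ is a projection in $\mathfrak{A}$, and $[q]=[e]+([p]-[f])+(-[p])=x$, so $x\in K_{0}(\mathfrak{A})_{+}$. For $K_{0}(\mathfrak{A})_{+}=K_{0}(\mathfrak{A})_{++}$, given such a class I would instead arrange the representative $q$ to sit below $p_{1}$ and set $q':=q+p_{2}+r_{0}$; since $[p_{2}]+[r_{0}]=0$ this has $[q']=[q]$, while $q'\ge p_{2}\sim p$ forces $q'$ to be norm-full (fullness is invariant under equivalence and passes up to larger projections), and $q'\le p$ is a projection in $\mathfrak{A}$. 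This proves the first assertion.

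For the final statement, let $p,q$ be norm-full properly infinite. Fullness of $q$ and the toolkit give $p\lesssim q^{\oplus k}$, and proper infiniteness of $q$ gives $q^{\oplus k}\lesssim q$, so $p\lesssim q$. Applying proper infiniteness to $q$ once more produces a proper subprojection $q_{1}<q$ with $q_{1}\sim q$; then $p\lesssim q\sim q_{1}$ provides a projection $e$ with $p\sim e\le q_{1}<q$, i.e. $p\sim e<q$, as required.

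The step I expect to be the main obstacle is the innocent-looking reduction used at the start of the second paragraph: representing an arbitrary class of $K_{0}(\mathfrak{A})$ as a difference of classes of honest projections over $\mathfrak{A}$, not merely over the unitization $\widetilde{\mathfrak{A}}$. With real rank zero this is exactly Lemma~\ref{l:kthyrr0}, but that hypothesis is not available here, so one must instead use the properly infinite full projection itself. The cleanest route is to pass first to the unital full corner $p\mathfrak{A}p$, whose ordered $K_{0}$ agrees with that of $\mathfrak{A}$, reduce to the unital case where the standard picture of $K_{0}$ by projections over the algebra applies, and then run the absorbing arguments above with $1=p$. Reproducing this bookkeeping, which is essentially the content of Proposition~1.5 of \cite{kthypureinf}, is the only genuinely delicate point; everything else is formal manipulation of the relation $[r_{0}]=-[p]$.
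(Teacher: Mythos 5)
Your proof is correct, but note first that the paper offers no argument of its own for this proposition: it is quoted from Proposition~1.5 of \cite{kthypureinf}, with Theorem~7 and Lemma~8 of \cite{segr} cited for the details. Your write-up is in effect a reconstruction of that classical Cuntz argument, and its steps all check out: the halving $p=p_{1}+p_{2}+r_{0}$ with $p_{1}\sim p_{2}\sim p$ (orthogonality of $p_{1},p_{2}$ does follow, since $s_{1}^{*}s_{2}=0$), the resulting identity $[r_{0}]=-[p]$, the absorption of arbitrary projection classes under $p$ via fullness plus $p^{\oplus k}\lesssim p$, the norm-fullness of $q'=q+p_{2}+r_{0}\geq p_{2}\sim p$, and the strict comparison $p\sim e\leq q_{1}<q$ with $e\neq q$ forced by $e\perp q_{2}$ and $0\neq q_{2}\leq q$. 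The genuinely different point is how the non-unital picture of $K_{0}$ is handled. You reduce to the unital corner $p\mathfrak{A}p$ and invoke the assertion that a full corner has the same (ordered) $K_{0}$ as the ambient algebra; be aware that this is not mere bookkeeping but a theorem --- Morita invariance of $K$-theory, which for $\sigma$-unital $\mathfrak{A}$ follows from Brown's stable isomorphism theorem and in complete generality requires Exel's Fredholm-operator argument --- although your argument only needs surjectivity of $K_{0}(p\mathfrak{A}p)\to K_{0}(\mathfrak{A})$, and the transfer back is sound because a projection that is norm-full in $p\mathfrak{A}p$ dominates nothing less than membership of $p$ in the ideal it generates, hence is norm-full in $\mathfrak{A}$. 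The cited sources instead carry out the unitization bookkeeping by hand. One further observation: everywhere this paper actually applies the proposition, the algebra in question has real rank zero, so Lemma~\ref{l:kthyrr0} already provides the projection picture of $K_{0}$ and the step you singled out as the main obstacle can be bypassed entirely; the corner detour (or Cuntz's original bookkeeping) is needed only for the proposition in its stated generality.
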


The next proposition gives a necessary $K$-theoretical condition for an extension to be full.

\begin{propo}\label{p:fullkthy}
Let $\mathfrak{A}$ be a separable $C^{*}$-algebra with real rank zero.  Let $\mathfrak{I}$ be an ideal of $\mathfrak{A}$ such that $\mathfrak{I}$ satisfies the corona factorization property, and $\mathfrak{I}$ is stable.  Suppose the extension
\begin{equation*}
\mathfrak{e} : 0 \to \mathfrak{I} \to \mathfrak{A} \to \mathfrak{A} / \mathfrak{I} \to 0
\end{equation*}
is full. Suppose for each ideal $\mathfrak{D}$ of $\mathfrak{A}$ with $\mathfrak{I} \subsetneq \mathfrak{D}$, $\mathfrak{D} / \mathfrak{I}$ has cancellation of projections or has a norm-full properly infinite projection.  Then for each ideal $\mathfrak{D}$ of $\mathfrak{A}$ with $\mathfrak{I} \subsetneq \mathfrak{D}$, 
\begin{equation*}
0 \to \mathfrak{I} \to \mathfrak{D} \to \mathfrak{D} / \mathfrak{I} \to 0
\end{equation*}
is $K$-lexicographic.  
\end{propo}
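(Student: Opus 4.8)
The plan is to fix an ideal $\mathfrak{D}$ with $\mathfrak{I} \subsetneq \mathfrak{D}$ and verify the two clauses of Definition~\ref{d:klexi} for the sub-extension $\mathfrak{e}_{\mathfrak{D}} : 0 \to \mathfrak{I} \to \mathfrak{D} \to \mathfrak{D}/\mathfrak{I} \to 0$. First I would record the reductions. Since $\tau_{\mathfrak{e}}$ restricts to the Busby invariant of $\mathfrak{e}_{\mathfrak{D}}$ and stays norm-full in $\corona{\mathfrak{I}}$, the extension $\mathfrak{e}_{\mathfrak{D}}$ is again full, hence stenotic by Proposition~\ref{p:sepid}; in particular $\mathfrak{I}$ is an essential ideal of $\mathfrak{D}$. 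All of $\mathfrak{D}$, $\mathfrak{I}$, $B := \mathfrak{D}/\mathfrak{I}$ have real rank zero, so projections lift along $\pi := \pi_{\mathfrak{I},\mathfrak{D}}$ and every $K_{0}$-class is a difference of classes of projections by Lemma~\ref{l:kthyrr0}. Write $\alpha = K_{0}(\iota_{\mathfrak{I},\mathfrak{D}})$ and $\beta = K_{0}(\pi)$. Throughout I work in matrix amplifications, writing $\sigma_{\mathfrak{e}}$, $\pi$ also for their amplifications and using that $\mathfrak{I}$ stable gives $\mathsf{M}_{n}(\mathfrak{I}) \cong \mathfrak{I}$, so that the corona factorization property and Lemma~\ref{l:fullcorona} apply verbatim to $\mathsf{M}_{n}(\mathfrak{I})$; note that the amplification of a full extension is full, since any nonzero matrix over $\corona{\mathfrak{I}}$ has a norm-full entry.

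The engine of the argument is the following \emph{key fact}: if $p$ is a projection in $\mathsf{M}_{n}(\mathfrak{D})$ with $\pi(p) \neq 0$, then $\sigma_{\mathfrak{e}}(p) \sim 1_{\multialg{\mathfrak{I}}}$. Indeed, fullness makes $\tau_{\mathfrak{e}}(\pi(p)) = \pi_{\mathfrak{I}}(\sigma_{\mathfrak{e}}(p))$ norm-full in $\corona{\mathfrak{I}}$, whence $\sigma_{\mathfrak{e}}(p)$ is norm-full in $\multialg{\mathfrak{I}}$ by Lemma~\ref{l:fullcorona}, and the corona factorization property of $\mathfrak{I}$ forces $\sigma_{\mathfrak{e}}(p) \sim 1_{\multialg{\mathfrak{I}}}$. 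From this I would deduce an \emph{absorption} statement: if $s$ is a projection with $\pi(s) \neq 0$ and $r$ is any projection in $\mathfrak{I}$, then $r \sim r' \leq s$ in $\mathsf{M}_{n}(\mathfrak{D})$. To see it, pick $W$ with $W^{*}W = 1_{\multialg{\mathfrak{I}}}$ and $WW^{*} = \sigma_{\mathfrak{e}}(s)$; then $v := Wr \in \mathfrak{I}$ satisfies $v^{*}v = r$ and $r' := vv^{*} = WrW^{*} \leq \sigma_{\mathfrak{e}}(s)$, and since $r' \in \mathfrak{I}$ this reads $\sigma_{\mathfrak{e}}(r') \leq \sigma_{\mathfrak{e}}(s)$, so $r' \leq s$ by Lemma~\ref{l:essentialmv}, using that $\mathfrak{I}$ is essential.

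For clause (1) I only need the case $K_{0}(B) \neq K_{0}(B)_{+}$; by Proposition~\ref{p:cuntz} this excludes a norm-full properly infinite projection in $B$, so the hypothesis forces $B$ to have cancellation of projections. The inclusion $\alpha(K_{0}(\mathfrak{I})_{+}) \subseteq K_{0}(\mathfrak{D})_{+}$ and its disjointness from $\beta^{-1}(K_{0}(B)_{+}\setminus\{0\})$ are immediate from $\beta\alpha = 0$. For $x = [P] \in K_{0}(\mathfrak{D})_{+}$ with $\beta(x) = 0$, cancellation gives $\pi(P) = 0$, so $P \in \mathsf{M}_{n}(\mathfrak{I})$ and $x \in \alpha(K_{0}(\mathfrak{I})_{+})$; if $\beta(x) \neq 0$ then $\beta(x) \in K_{0}(B)_{+}\setminus\{0\}$. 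The substantial inclusion is $\beta^{-1}(K_{0}(B)_{+}\setminus\{0\}) \subseteq K_{0}(\mathfrak{D})_{+}$. Writing $x = [P]-[Q]$ with $\beta(x) = [\pi(P)]-[\pi(Q)]$ nonzero and positive, cancellation yields a \emph{strict} subequivalence $\pi(Q) \sim \bar e < \pi(P)$. Lemma~\ref{l:strict} lifts this to $Q_{1} \sim e_{1} \leq P$ with $\pi(Q_{1}) = \pi(Q)$ and $\pi(e_{1}) < \pi(P)$; thus $Q - Q_{1} \in \mathfrak{I}$, $\pi(P - e_{1}) \neq 0$, and $[P]-[Q] = [P-e_{1}]-[Q-Q_{1}]$. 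Applying absorption with $s = P - e_{1}$ and $r = Q - Q_{1}$ gives $Q - Q_{1} \sim r' \leq P - e_{1}$, so $[P]-[Q] = [(P-e_{1})-r'] \in K_{0}(\mathfrak{D})_{+}$.

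For clause (2) I assume $K_{0}(B) = K_{0}(B)_{+} = K_{0}(B)_{++}$ and aim to produce a norm-full properly infinite projection in $\mathfrak{D}$, after which Proposition~\ref{p:cuntz} delivers $K_{0}(\mathfrak{D}) = K_{0}(\mathfrak{D})_{+} = K_{0}(\mathfrak{D})_{++}$. Since cancellation forces $K_{0}(B)_{+} \neq K_{0}(B)$ by the remark following its definition, the hypothesis rules out cancellation, so $B$ has a norm-full properly infinite projection $\bar p$, which I lift to a projection $p \in \mathfrak{D}$. Stenosis makes $p$ norm-full: the ideal it generates is not contained in $\mathfrak{I}$, hence contains $\mathfrak{I}$, and surjects onto the full ideal generated by $\bar p$, namely $B$, so it is all of $\mathfrak{D}$. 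The crux is that $p$ is properly infinite, i.e. $p \oplus p \lesssim p$. Here I combine the tools once more: the ``moreover'' clause of Proposition~\ref{p:cuntz}, applied in $\mathsf{M}_{2}(B)$, gives a strict subequivalence $\pi(p\oplus p) \sim \bar e' < \pi(p \oplus 0)$; Lemma~\ref{l:strict} lifts it to $P_{1} \sim E_{1} \leq p \oplus 0$ with $\pi(P_{1}) = \pi(p \oplus p)$ and $\pi(E_{1}) < \pi(p\oplus 0)$; the complement $(p\oplus p) - P_{1}$ lies in $\mathsf{M}_{2}(\mathfrak{I})$ while $(p \oplus 0) - E_{1}$ has nonzero image, so absorption places $(p\oplus p)-P_{1} \sim r' \leq (p \oplus 0) - E_{1}$, and adding the two orthogonal equivalences yields $p \oplus p \sim E_{1} + r' \leq p \oplus 0$. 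I expect this proper-infiniteness lifting to be the main obstacle, since it must interlock Lemma~\ref{l:strict}, Proposition~\ref{p:cuntz}, and absorption while carefully tracking orthogonality of the two summands; the bookkeeping needed to transport fullness and the corona factorization property to the amplifications $\mathsf{M}_{n}(\mathfrak{I}) \cong \mathfrak{I}$ is routine but must be handled with care.
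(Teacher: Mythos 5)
Your proposal is correct and follows essentially the same route as the paper's proof: the same case analysis (cancellation of projections versus a norm-full properly infinite projection in the quotient), the same lifting of strict subequivalences from $\mathfrak{D}/\mathfrak{I}$ via Lemma~\ref{l:strict}, and the same use of fullness, Lemma~\ref{l:fullcorona}, the corona factorization property, and Lemma~\ref{l:essentialmv} to absorb the leftover projection over $\mathfrak{I}$ and conclude positivity. The differences are only organizational: you work with matrix amplifications $\mathsf{M}_{n}(\mathfrak{I})\cong\mathfrak{I}$ together with an elementary fullness-of-amplifications observation where the paper invokes Proposition~1.6 of \cite{ERRshift} for the stabilized extension $\mathfrak{e}^{s}$, and you isolate the ``key fact'' and ``absorption'' steps as reusable lemmas where the paper carries them out separately in each case.
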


\begin{proof}
First note that since $\mathfrak{e}$ is a full extension and $\mathfrak{I}$ is stable, by Proposition~1.6 of \cite{ERRshift}, 
\begin{align*}
\mathfrak{e}^{s} : 0 \to \mathfrak{I} \otimes \K \to \mathfrak{A} \otimes \K \to ( \mathfrak{A} / \mathfrak{I} ) \otimes \K \to 0
\end{align*}
is a full extension.  Let $\mathfrak{D}$ be an ideal of $\mathfrak{A}$ with $\mathfrak{I} \subsetneq \mathfrak{D}$.  Suppose $\mathfrak{D} / \mathfrak{I}$ has cancellation of projections.  Since $K_{0} ( \mathfrak{D} / \mathfrak{I} )_{+} \neq K_{0} ( \mathfrak{D} / \mathfrak{I} )$, to prove that 
\begin{align*}
0 \to \mathfrak{I} \to \mathfrak{D} \to \mathfrak{D} / \mathfrak{I} \to 0
\end{align*}  
is $K$-lexicographic, we need to show that
\begin{align*}
( K_{0} ( \mathfrak{I} ) , K_{0} ( \mathfrak{I} )_{+} ) \to ( K_{0} ( \mathfrak{D} ) , K_{0} ( \mathfrak{D} )_{+} ) \to ( K_{0} ( \mathfrak{D} / \mathfrak{I} ) , K_{0} ( \mathfrak{D} / \mathfrak{I} )_{+} )
\end{align*}
is lexicographic, i.e., 
\begin{align*}
K_{0} ( \mathfrak{D} )_{+} = K_{0}(\iota_{ \mathfrak{I}, \mathfrak{D} } ) ( K_{0} ( \mathfrak{I} )_{+} ) \sqcup \setof{ x \in K_{0} ( \mathfrak{D} ) }{ K_{0} ( \pi_{ \mathfrak{I} , \mathfrak{D} } )( x ) > 0 }.
\end{align*}

Set $Y =  K_{0}(\iota_{ \mathfrak{I}, \mathfrak{D} } ) ( K_{0} ( \mathfrak{I} )_{+} ) \sqcup \setof{ x \in K_{0} ( \mathfrak{D} ) }{ K_{0} ( \pi_{ \mathfrak{I} , \mathfrak{D} } )( x ) > 0 }$.  By Lemma~\ref{l:kthyrr0}, $K_{0} ( \mathfrak{D} )_{+} \subseteq Y$.  It is clear that $K_{0}(\iota_{ \mathfrak{I}, \mathfrak{D} } ) ( K_{0} ( \mathfrak{I} )_{+} ) \subseteq K_{0} ( \mathfrak{D} )_{+}$.  Let $x \in K_{0} ( \mathfrak{D} )$ such that $K_{0} ( \pi_{\mathfrak{I}, \mathfrak{D} } ) ( x ) > 0$.  By Lemma~\ref{l:kthyrr0}, there exist projections $p$ and $q$ in $\mathfrak{D} \otimes \K$ such that $x = [ p ] - [ q ]$.  Since $K_{0} ( \pi_{\mathfrak{I},  \mathfrak{D} } )( x ) > 0$ and since $\mathfrak{D} / \mathfrak{I}$ has cancellation of projections, there exist nonzero orthogonal projections $e , q_{1} \in ( \mathfrak{D}  / \mathfrak{I} ) \otimes \K$ such that $q_{1} \sim \pi_{ \mathfrak{I}, \mathfrak{D} }^{s} (q)$ and $\pi_{ \mathfrak{I} ,\mathfrak{D} }^{s}( p ) \sim (q_{1} + e )$.  Hence, $\pi_{ \mathfrak{I}, \mathfrak{D} }^{s} ( q ) \sim e' < \pi_{ \mathfrak{I}, \mathfrak{D} }^{s} ( p )$ in $( \mathfrak{D} / \mathfrak{I} )\otimes \K$.  By Lemma~\ref{l:strict}, there exists a partial isometry $v_{1}$ in $\mathfrak{D} \otimes \K$ such that $v_{1}^{*} v_{1} \leq q$, $q - v_{1}^{*} v_{1} \in \mathfrak{I} \otimes \K$, $v_{1} v_{1}^{*} \leq p$, and $p - v_{1} v_{1}^{*} \notin \mathfrak{I} \otimes \K$.  Since $\mathfrak{e}^{s}$ is a full extension, $\tau_{ \mathfrak{e}^{s} } ( \pi_{ \mathfrak{I}, \mathfrak{A} }^{s} (p - v_{1} v_{1}^{*} ) )$ is a norm-full projection in $\corona{ \mathfrak{I} \otimes \K }$.  By Lemma~\ref{l:fullcorona}, $\sigma_{ \mathfrak{e}^{s} } ( p - v_{1} v_{1}^{*} )$ is a norm-full projection in $\multialg{ \mathfrak{I} \otimes \K }$.  Hence, $\sigma_{ \mathfrak{e}^{s} } ( p - v_{1} v_{1}^{*} ) \sim 1_{ \multialg{ \mathfrak{I} \otimes \K } }$ in $\multialg{ \mathfrak{I} \otimes \K }$ since $\mathfrak{I} \otimes \K$ satisfies the corona factorization property.  Therefore, $\theta_{ \mathfrak{I} \otimes \K } ( q - v_{1}^{*}v_{1} ) \lesssim \sigma_{ \mathfrak{e}^{s} } ( p - v_{1}v_{1}^{*} )$ in $\multialg{ \mathfrak{I} \otimes \K }$.  Since $\theta_{ \mathfrak{I} \otimes \K } ( q - v_{1}^{*}v_{1} ) \in \theta_{ \mathfrak{I} \otimes \K } ( \mathfrak{I} \otimes \K )$ and $\theta_{ \mathfrak{I} \otimes \K } ( \mathfrak{I} \otimes \K )$ is an ideal of $\multialg{ \mathfrak{I} \otimes \K }$, by Lemma~\ref{l:her}, there exists $v_{2} \in \mathfrak{I} \otimes \K$ such that $\theta_{ \mathfrak{I} \otimes \K } ( v_{2}^{*}v_{2} ) =  \theta_{ \mathfrak{I} \otimes \K } ( q- v_{1}^{*} v_{1} )$ and $\theta_{ \mathfrak{I} \otimes \K } ( v_{2} v_{2}^{*} ) \leq \sigma_{ \mathfrak{e}^{s} } ( p - v_{1}v_{1}^{*} )$.  By Lemma~\ref{l:essentialmv}, $v_{2} v_{2}^{*} \leq p - v_{1} v_{1}^{*}$.  Set $v = v_{1} + v_{2}$.  Then $v \in \mathfrak{D} \otimes \K$, $v^{*} v = v_{1}^{*} v_{1} + v_{2}^{*} v_{2} = q$, and $vv^{*} = v_{1}v_{1}^{*} + v_{2}v_{2}^{*} \leq p$.  Hence, $x \in K_{0} ( \mathfrak{D} )_{+}$.  Thus, $K_{0} ( \mathfrak{D} )_{+} = Y$, which implies that 
\begin{equation*}
0 \to \mathfrak{I} \to \mathfrak{D} \to \mathfrak{D} / \mathfrak{I} \to 0 
\end{equation*}
is $K$-lexicographic.

Suppose $\mathfrak{D} / \mathfrak{I}$ has a norm-full properly infinite projection.  Then by Proposition~\ref{p:cuntz}, $K_{0} ( \mathfrak{D} / \mathfrak{I} ) = K_{0} ( \mathfrak{D} / \mathfrak{I} )_{+} = K_{0} ( \mathfrak{D} / \mathfrak{I} )_{++}$.  So, to prove that 
\begin{align*}
0 \to \mathfrak{I} \to \mathfrak{D} \to \mathfrak{D} / \mathfrak{I} \to 0
\end{align*}  
is $K$-lexicographic, we need to show that $K_{0} ( \mathfrak{D} )_{++} = K_{0} ( \mathfrak{D} )_{+} = K_{0} ( \mathfrak{D} )$.  To show this, we first show that for all norm-full projections $p, q$ in $\mathfrak{D} \otimes \K$ with $\pi^{s}_{ \mathfrak{I} , \mathfrak{D} } (p)$ and $\pi^{s} _{ \mathfrak{I} , \mathfrak{D} } (q)$ properly infinite, $q \lesssim p$ in $\mathfrak{D} \otimes \K$.  

Let $p$ and $q$ be norm-full projections in $\mathfrak{D} \otimes \K$ such that $\pi^{s}_{ \mathfrak{I} , \mathfrak{D} } (p)$ and $\pi^{s} _{ \mathfrak{I} , \mathfrak{D} } (q)$ are properly infinite projections in $\left( \mathfrak{D} / \mathfrak{I} \right) \otimes \K$.  By Proposition~\ref{p:cuntz}, there exists a projection $e$ in $( \mathfrak{D} / \mathfrak{I} ) \otimes \K$ such that $\pi_{ \mathfrak{I}, \mathfrak{D} }^{s} ( q ) \sim e < \pi_{ \mathfrak{I}, \mathfrak{D} }^{s} ( p )$ in $( \mathfrak{D} / \mathfrak{I} ) \otimes \K$.  By Lemma~\ref{l:strict}, there exists $v_{1} \in \mathfrak{D} \otimes \K$ such that $v_{1}^{*} v_{1} \leq q$, $v_{1} v_{1}^{*} \leq p$, $\pi_{ \mathfrak{I},  \mathfrak{D} }^{s} ( v_{1}^{*} v_{1} ) = \pi_{ \mathfrak{I}, \mathfrak{D} }^{s} ( q )$ and $\pi_{ \mathfrak{I}, \mathfrak{D} }^{s} ( v_{1} v_{1}^{*} ) \neq \pi_{ \mathfrak{D} }^{s} ( p )$.  Since $\mathfrak{e}^{s}$ is a full extension, $\tau_{ \mathfrak{e}^{s} } ( \pi_{ \mathfrak{I}, \mathfrak{A} }^{s} ( p - v_{1} v_{1}^{*} ) )$ is a norm-full projection in $\corona{ \mathfrak{I} \otimes \K }$.  By Lemma~\ref{l:fullcorona}, $\sigma_{ \mathfrak{e}^{s} } ( p - v_{1} v_{1}^{*} )$ is a norm-full projection in $\multialg{ \mathfrak{I} \otimes \K }$.  Since $\mathfrak{I} \otimes \K$ has the corona factorization property, there exists $w \in \multialg{ \mathfrak{I} \otimes \K }$ such that $ww^{*} = \sigma_{ \mathfrak{e}^{s} } ( p - v_{1} v_{1}^{*} )$ and $w^{*} w = 1_{ \multialg{ \mathfrak{I} \otimes \K } }$.  Let $v_{2} \in \mathfrak{I} \otimes \K$ such that $\theta_{ \mathfrak{I} \otimes \K } ( v_{2} ) =  w\theta_{ \mathfrak{I} \otimes \K } ( q - v_{1}^{*} v_{1})$.  Since $\theta_{ \mathfrak{I} \otimes \K } ( v_{2}^{*} v_{2} ) = \theta_{ \mathfrak{I} \otimes \K } ( q - v_{1}^{*} v_{1} )$ and $\sigma_{ \mathfrak{e} } ( v_{2} v_{2}^{*} ) = \theta_{ \mathfrak{I} \otimes \K } ( v_{2} v_{2}^{*} ) \leq \sigma_{ \mathfrak{e} } ( p - v_{1} v_{1}^{*} )$, we have that $v_{2}^{*} v_{2} = q - v_{1} v_{1}^{*}$ and by Lemma~\ref{l:essentialmv}, $v_{2} v_{2}^{*} \leq p - v_{1} v_{1}^{*}$.  Thus, $v = v_{1} + v_{2} \in \mathfrak{D} \otimes \K$ such that $v^{*} v = q$ and $v v^{*} \leq p$.  We have proved our claim.  

Let $q$ be a norm-full properly infinite projection in $\mathfrak{D} / \mathfrak{I}$.  Since $\mathfrak{D}$ has real rank zero, by  \cite[Theorem~3.14]{realrank}, there exists a projection $p$ in $\mathfrak{D}$ such that $\pi_{ \mathfrak{I} , \mathfrak{D} } (p) = q$.  Since $\mathfrak{e}$ is full, by Proposition~\ref{p:sepid}, $\mathfrak{e}$ is stenotic.  Hence, $p$ is a norm-full projection in $\mathfrak{D}$.  Set $p_{1} = p \otimes e_{11}$ and $p_{2} = p \otimes e_{22}$ in $\mathfrak{D} \otimes \K$.  Then $p_{1}$ and $p_{2}$ are orthogonal projections with $p_{1} \sim p_{2}$.  Note that $p_{1} + p_{2}$ and $p_{1}$ are norm-full projections in $\mathfrak{D} \otimes \K$ and $\pi_{\mathfrak{I} , \mathfrak{D} }^{s} ( p _{1} + p_{2} )$ and $\pi_{ \mathfrak{I} , \mathfrak{D} }^{s} ( p_{1} )$ are norm-full properly infinite projections in $( \mathfrak{D} / \mathfrak{I} ) \otimes \K$.  Hence, by the above claim, $p _{ 1} + p_{2} \lesssim p_{1}$.  Therefore, $p \oplus p \lesssim p$.  We have just shown that $\mathfrak{D}$ has a norm-full properly infinite projection.  By Proposition~\ref{p:cuntz}, $K_{0} ( \mathfrak{D}  ) = K_{0} ( \mathfrak{D}  )_{+} = K_{0} ( \mathfrak{D} )_{++}$.  Hence,
\begin{equation*}
0 \to \mathfrak{I} \to \mathfrak{D} \to \mathfrak{D} / \mathfrak{I} \to 0 
\end{equation*}
is $K$-lexicographic.
\end{proof}

We will show that the converse of the above result is true when $\mathfrak{A}$ has the \emph{stable weak cancellation property}.

\begin{defin}
A $C^{*}$-algebra $\mathfrak{A}$ has \emph{weak cancellation} if any pair of projections $p$ and $q$ in $\mathfrak{A}$ that generate the same closed ideal $\mathfrak{I}$ in $\mathfrak{A}$ and have the same image in $K_{0} ( \mathfrak{I} )$ must be Murray-von Neumann equivalent.  If $\mathsf{M}_{n} ( \mathfrak{A} )$ has weak cancellation for every $n$, then we say that $\mathfrak{A}$ has \emph{stable weak cancellation}.
\end{defin}

Note that $\mathfrak{A}$ has stable weak cancellation if and only if $\mathfrak{A} \otimes \K$ has weak cancellation.  Ara, Moreno, and Pardo in \cite{amp:nonstablekthy} showed that every graph $C^{*}$-algebra has stable weak cancellation.  It is an open question if every real rank zero $C^{*}$-algebra has stable weak cancellation.

\begin{lemma}\label{l:swkc}
Let $\mathfrak{A}$ be a $C^{*}$-algebra with real rank zero.  Then $\mathfrak{I}$ and $\mathfrak{A} / \mathfrak{I}$ have stable weak cancellation if and only if $\mathfrak{A}$ has stable weak cancellation.  
\end{lemma}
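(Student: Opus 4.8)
The first move is to pass to the stable picture. Since stable weak cancellation of a $C^*$-algebra means precisely weak cancellation of its stabilization, and since real rank zero is inherited by ideals, quotients and tensoring with $\K$, while $0 \to \mathfrak{I} \otimes \K \to \mathfrak{A} \otimes \K \to (\mathfrak{A}/\mathfrak{I}) \otimes \K \to 0$ is again exact, it suffices to prove the following: if $\mathfrak{B}$ is a real rank zero $C^*$-algebra with ideal $\mathfrak{J}$ and quotient $\mathfrak{Q} = \mathfrak{B}/\mathfrak{J}$, where $\mathfrak{B}, \mathfrak{J}, \mathfrak{Q}$ are all stable, then $\mathfrak{B}$ has weak cancellation if and only if both $\mathfrak{J}$ and $\mathfrak{Q}$ do. I would first record two standard reductions used throughout: every closed ideal of $\mathfrak{J}$ is automatically a closed ideal of $\mathfrak{B}$ (multiply by an approximate unit of $\mathfrak{J}$), and consequently weak cancellation passes unconditionally to ideals: if $p,q$ lie in an ideal $\mathfrak{L}$, generate the same ideal $\mathfrak{K}$ of $\mathfrak{L}$ (hence of $\mathfrak{B}$) and agree in $K_0(\mathfrak{K})$, then weak cancellation of the larger algebra gives $v$ with $v^*v=p$, $vv^*=q$, and $w = qvp \in \mathfrak{L}$ witnesses $p \sim q$ inside $\mathfrak{L}$. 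In particular weak cancellation of $\mathfrak{Q}$ descends to every ideal of $\mathfrak{Q}$, and likewise for $\mathfrak{J}$.

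\emph{Direction $\mathfrak{B} \Rightarrow \mathfrak{J},\mathfrak{Q}$.} The ideal factor $\mathfrak{J}$ is handled by the descent observation just made. For $\mathfrak{Q}$, take projections $\bar p, \bar q$ generating a common ideal $\bar{\mathfrak{K}}$ with $[\bar p]=[\bar q]$ in $K_0(\bar{\mathfrak{K}})$; writing $\bar{\mathfrak{K}} = \mathfrak{K}/\mathfrak{J}$ with $\mathfrak{J} \subseteq \mathfrak{K} \trianglelefteq \mathfrak{B}$, the ideal $\mathfrak{K}$ has real rank zero and weak cancellation. Using real rank zero I lift $\bar p, \bar q$ to projections $p,q \in \mathfrak{K}$; then $[p]-[q]$ lies in the image of $K_0(\mathfrak{J}) \to K_0(\mathfrak{K})$ by exactness, and by Lemma~\ref{l:kthyrr0} together with stability of $\mathfrak{J}$ I may write this class as $[f]-[e]$ with $e,f$ projections in $\mathfrak{J}$, arranged orthogonal to $p,q$. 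This yields $[p \oplus e] = [q \oplus f]$ in $K_0(\mathfrak{K})$, and after absorbing a suitable projection of $\mathfrak{J}$ so that both sums generate the same ideal of $\mathfrak{K}$, weak cancellation of $\mathfrak{K}$ produces an equivalence which projects down to $\bar p \sim \bar q$ in $\bar{\mathfrak{K}}$, hence in $\mathfrak{Q}$ by Lemma~\ref{l:her}.

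\emph{Direction $\mathfrak{J},\mathfrak{Q} \Rightarrow \mathfrak{B}$.} Given $p,q$ in $\mathfrak{B}$ generating a common ideal with equal $K_0$-image there, Lemma~\ref{l:her} lets me assume $p,q$ are norm-full in $\mathfrak{B}$ and $[p]=[q]$ in $K_0(\mathfrak{B})$. Their images are full in $\mathfrak{Q}$ with equal $K_0(\mathfrak{Q})$-class, so weak cancellation of $\mathfrak{Q}$ gives $\pi(p) \sim \pi(q)$. Applying Lemma~\ref{l:strict} (in the case $e = \pi(q)$, legitimate since $p\mathfrak{J}p$ has an approximate identity of projections by real rank zero) I split off subprojections $p_1 \le p$ and $e_1 \le q$ with $p_1 \sim e_1$ and $\pi(p_1)=\pi(p)$, $\pi(e_1)=\pi(q)$; thus $a := p-p_1$ and $b := q-e_1$ are projections in $\mathfrak{J}$ with $[a]=[b]$ in $K_0(\mathfrak{B})$. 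If $a \sim b$ via $v_2$ and $p_1 \sim e_1$ via $v_1$, the orthogonality $p_1 \perp a$, $e_1 \perp b$ makes $v_1+v_2$ a partial isometry realizing $p \sim q$, as required. So the whole weight falls on producing $a \sim b$ inside $\mathfrak{J}$ from weak cancellation of $\mathfrak{J}$.

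\emph{The main obstacle.} Both directions reduce to the same delicate point: to invoke weak cancellation of the stable factor $\mathfrak{J}$ (or of an ambient ideal $\mathfrak{K}$) one needs the relevant pair of projections not merely to have equal $K_0$-class in the \emph{ambient} algebra, but to \emph{generate the same ideal} and agree in the $K_0$ of \emph{that} ideal. Transferring equality of classes from $K_0(\mathfrak{B})$ down into the finer group $K_0$ of the precise ideal inside $\mathfrak{J}$, and simultaneously forcing the two generated ideals to coincide, is the crux. I expect to overcome it using stability of $\mathfrak{J} = \mathfrak{I}\otimes\K$ to pad and absorb projections (providing an abundance of mutually equivalent and orthogonal copies), together with real rank zero to lift projections and supply approximate identities of projections, possibly re-choosing the lift in Lemma~\ref{l:strict} to kill the index obstruction in $\operatorname{im}(\partial)$ coming from the six-term sequence. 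Lemmas~\ref{l:essentialmv} and~\ref{l:str1} will be used to turn the inequalities of multiplier-level images back into honest subequivalences in $\mathfrak{B}$.
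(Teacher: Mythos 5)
Your proposal is not a completed proof: the step you yourself label ``the main obstacle'' is the entire content of the lemma, and you leave it unresolved (``I expect to overcome it using stability\dots''). Concretely, in both directions you reduce to a pair of projections (your $a,b$, respectively $p\oplus e$, $q\oplus f$) for which you only know equality of $K_0$-classes in the \emph{ambient} algebra ($K_0(\mathfrak{B})$ or $K_0(\mathfrak{K})$), whereas weak cancellation may only be invoked when the two projections generate the same ideal $\mathfrak{L}$ \emph{and} have the same image in $K_0(\mathfrak{L})$. These hypotheses are strictly stronger: the map $K_0(\mathfrak{L})\to K_0(\mathfrak{K})$ can have nontrivial kernel (it is the image of the index map $K_1(\mathfrak{K}/\mathfrak{L})\to K_0(\mathfrak{L})$ in the six-term sequence), and there is no general way to force the two generated ideals to coincide by ``padding and absorbing'': a stable real rank zero $C^*$-algebra such as $c_0\otimes\K$ contains no norm-full projection at all, so absorbing projections from the stable ideal cannot make your sums norm-full or make their generated ideals match. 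Hence neither direction of your argument closes, and the elementary repairs you gesture at (re-choosing lifts, Lemma~\ref{l:essentialmv}, Lemma~\ref{l:str1}) do not address this kernel-and-ideal obstruction.

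The paper's proof avoids the obstruction entirely by changing categories. By Proposition~2.1 of \cite{amp:nonstablekthy}, for the algebras at hand stable weak cancellation is equivalent to \emph{separativity}, a purely monoid-theoretic condition on Murray--von Neumann equivalence classes; and real rank zero $C^*$-algebras are exchange rings, for which Theorem~7.5 of \cite{agop:exhangerings} gives exactly the required two-out-of-three statement: an exchange ring is separative if and only if a given ideal and the corresponding quotient are. The paper's argument is thus two citations in each direction, while the extension problem you got stuck on is precisely what the monoid/refinement machinery of \cite{agop:exhangerings} was developed to solve; a projection-by-projection argument of the kind you outline would essentially have to reprove that theorem.
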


\begin{proof}
Suppose $\mathfrak{I}$ and $\mathfrak{A} / \mathfrak{I}$ have stable weak cancellation.  Then, by Proposition~2.1 of \cite{amp:nonstablekthy}, $\mathfrak{I}$ and $\mathfrak{A} / \mathfrak{I}$ are separative.  Since $\mathfrak{A}$ has real rank zero, by Theorem~7.5 of \cite{agop:exhangerings}, $\mathfrak{A}$ is separative.  Thus, by Proposition~2.1 of \cite{amp:nonstablekthy}, $\mathfrak{A}$ has stable weak cancellation.

Suppose $\mathfrak{A}$ has stable weak cancellation.  By Theorem~7.5 of \cite{agop:exhangerings}, $\mathfrak{I}$ and $\mathfrak{A} / \mathfrak{I}$ are separative.  Hence, by Proposition~2.1 of \cite{amp:nonstablekthy}, $\mathfrak{I}$ and $\mathfrak{A} / \mathfrak{I}$ have stable weak cancellation.
\end{proof}

\begin{lemma}\label{l:sepid}
Let $\mathfrak{A}$ be a $C^{*}$-algebra and let $\mathfrak{I}$ be an ideal of $\mathfrak{A}$.  Suppose
\begin{equation*}
\mathfrak{e} : 0 \to \mathfrak{I} \to \mathfrak{A} \to \mathfrak{A} / \mathfrak{I} \to 0
\end{equation*}
is stenotic.  Then for all $p, q \in \mathfrak{A} \setminus \mathfrak{I}$, $p$ and $q$ generate the same ideal of $\mathfrak{A}$ if and only if $\pi_{ \mathfrak{I}, \mathfrak{A} } (p)$ and $\pi_{ \mathfrak{I} , \mathfrak{A} } (q)$ generate the same ideal of $\mathfrak{A} / \mathfrak{I}$.
\end{lemma}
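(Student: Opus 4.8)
The plan is to reduce the statement to the standard lattice correspondence between the closed ideals of $\mathfrak{A}$ containing $\mathfrak{I}$ and the closed ideals of $\mathfrak{A}/\mathfrak{I}$, with stenosis entering only to guarantee that the relevant generated ideals actually contain $\mathfrak{I}$. Throughout I write $\pi = \pi_{\mathfrak{I},\mathfrak{A}}$ and let $\langle a \rangle$ denote the closed two-sided ideal of $\mathfrak{A}$ generated by an element $a$ (and similarly in the quotient).

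First I would record two routine facts. \textbf{(i)} For every $a \in \mathfrak{A}$ one has $\pi(\langle a\rangle) = \langle \pi(a)\rangle$: the inclusion $\langle\pi(a)\rangle \subseteq \pi(\langle a\rangle)$ holds because $\pi(\langle a\rangle) = (\langle a\rangle + \mathfrak{I})/\mathfrak{I}$ is a \emph{closed} ideal of $\mathfrak{A}/\mathfrak{I}$ (a sum of closed ideals in a $C^*$-algebra is closed) containing $\pi(a)$, while the reverse inclusion is immediate from continuity and surjectivity of $\pi$. \textbf{(ii)} The correspondence theorem: $\pi$ restricts to an inclusion-preserving bijection between the closed ideals of $\mathfrak{A}$ containing $\mathfrak{I}$ and the closed ideals of $\mathfrak{A}/\mathfrak{I}$; in particular, ideals $\mathfrak{J}_1, \mathfrak{J}_2 \supseteq \mathfrak{I}$ satisfy $\mathfrak{J}_1 = \mathfrak{J}_2$ if and only if $\pi(\mathfrak{J}_1) = \pi(\mathfrak{J}_2)$.

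The forward implication then needs neither stenosis nor (ii): if $\langle p\rangle = \langle q\rangle$, applying (i) gives $\langle \pi(p)\rangle = \pi(\langle p\rangle) = \pi(\langle q\rangle) = \langle\pi(q)\rangle$. For the converse, the decisive step is to invoke stenosis. Since $p \notin \mathfrak{I}$, the element $p$ itself witnesses $\langle p\rangle \not\subseteq \mathfrak{I}$, so the stenotic dichotomy forces the other alternative, $\mathfrak{I} \subseteq \langle p\rangle$; likewise $\mathfrak{I} \subseteq \langle q\rangle$. Hence both $\langle p\rangle$ and $\langle q\rangle$ lie in the domain of the bijection (ii). Now assuming $\langle\pi(p)\rangle = \langle\pi(q)\rangle$, fact (i) gives $\pi(\langle p\rangle) = \pi(\langle q\rangle)$, and injectivity in (ii) yields $\langle p\rangle = \langle q\rangle$.

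I expect facts (i) and (ii) to be entirely standard, so the only genuine content—and the main thing to get right—is the use of the stenotic hypothesis in the converse: stenosis is exactly what upgrades the quotient-level equality to an equality of ideals that contain $\mathfrak{I}$ and are therefore determined by their images. Without it the converse can fail, as two elements outside $\mathfrak{I}$ might generate ideals that agree modulo $\mathfrak{I}$ yet differ inside $\mathfrak{I}$.
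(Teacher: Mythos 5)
Your proof is correct and follows essentially the same route as the paper: the forward direction comes for free from surjectivity of $\pi_{\mathfrak{I},\mathfrak{A}}$, and the converse hinges on exactly the same observation, namely that stenosis together with $p,q\notin\mathfrak{I}$ forces $\mathfrak{I}\subseteq\langle p\rangle$ and $\mathfrak{I}\subseteq\langle q\rangle$. The only cosmetic difference is that you invoke the ideal-lattice correspondence as a black box, whereas the paper proves the needed injectivity inline (for $x\in\mathfrak{D}_i$, lift $\pi(x)$ to some $z\in\mathfrak{D}_j$ and use $x-z\in\mathfrak{I}\subseteq\mathfrak{D}_j$).
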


\begin{proof}
The ``only if'' direction follows from the fact that $\pi_{ \mathfrak{I}, \mathfrak{A} }$ is surjective.  We prove the ``if'' direction.  Let $\mathfrak{D}_{1}$ be the ideal generated by $p$ and let $\mathfrak{D}_{2}$ be the ideal generated by $q$.  Note that $\mathfrak{I} \subseteq \mathfrak{D}_{i}$ since $p, q \in \mathfrak{A} \setminus \mathfrak{I}$ and $\mathfrak{e}$ is stenotic.  Let $x \in \mathfrak{D}_{i}$.  Then $\pi_{ \mathfrak{I}, \mathfrak{A} } (x) \in \pi_{ \mathfrak{I}, \mathfrak{A} } ( \mathfrak{D}_{j} )$.  Hence, there exists $z \in \mathfrak{D}_{j}$ such that $x - z \in \mathfrak{I}$.  Since $\mathfrak{I} \subseteq \mathfrak{D}_{j}$, we have that $x \in \mathfrak{D}_{j}$.  So, $\mathfrak{D}_{i} \subseteq \mathfrak{D}_{j}$. 
\end{proof}

\begin{lemma}\label{l:rr0full}
Let $\mathfrak{A}$ be a $C^{*}$-algebra such that $\mathfrak{A}$ has real rank zero and let $\mathfrak{I}$ be an ideal of $\mathfrak{A}$.  If $\tau_{\mathfrak{e}} ( \pi_{ \mathfrak{I}, \mathfrak{A} } ( p) )$ is a norm-full projection in $\corona{\mathfrak{I}}$ for all projections $p \in \mathfrak{A} \setminus \mathfrak{I}$, then 
\begin{align*}
\mathfrak{e} : 0 \to \mathfrak{I} \to \mathfrak{A} \to \mathfrak{A} / \mathfrak{I} \to 0
\end{align*}
is a full extension.
\end{lemma}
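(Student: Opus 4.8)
The plan is to reduce the fullness condition, which must be verified for every nonzero element of $\mathfrak{A}/\mathfrak{I}$, to the hypothesis, which only concerns images of honest projections coming from $\mathfrak{A}$. The bridge between the two is real rank zero, used in two ways: it guarantees an abundance of projections inside the relevant ideal of the quotient, and it allows such projections to be lifted back into $\mathfrak{A}$. So let $a \in \mathfrak{A}/\mathfrak{I}$ be nonzero, and let $\mathfrak{J}$ be the closed two-sided ideal it generates in $\mathfrak{A}/\mathfrak{I}$. Since $\mathfrak{A}$ has real rank zero, so does the quotient $\mathfrak{A}/\mathfrak{I}$, and hence so does the (hereditary) ideal $\mathfrak{J}$; as $\mathfrak{J}\neq 0$, it contains a nonzero projection $\bar{p}$. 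By construction $\bar{p}$ lies in the ideal of $\mathfrak{A}/\mathfrak{I}$ generated by $a$.

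Next I would lift $\bar{p}$. Using again that $\mathfrak{A}$ has real rank zero, projections in $\mathfrak{A}/\mathfrak{I}$ lift to projections in $\mathfrak{A}$ (by the results in \cite{realrank}, exactly as in the proof of Proposition~\ref{p:fullkthy}), so there is a projection $p\in \mathfrak{A}$ with $\pi_{\mathfrak{I},\mathfrak{A}}(p)=\bar{p}$; since $\bar{p}\neq 0$ we have $p\in \mathfrak{A}\setminus\mathfrak{I}$. The hypothesis now applies to this $p$ and yields that $\tau_{\mathfrak{e}}(\bar{p})=\tau_{\mathfrak{e}}(\pi_{\mathfrak{I},\mathfrak{A}}(p))$ is norm-full in $\corona{\mathfrak{I}}$.

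Finally I would transport fullness from $\bar{p}$ back to $a$. Because $\tau_{\mathfrak{e}}$ is a $*$-homomorphism and $\bar{p}$ lies in the closed ideal of $\mathfrak{A}/\mathfrak{I}$ generated by $a$, multiplicativity together with continuity shows that $\tau_{\mathfrak{e}}(\bar{p})$ lies in the closed ideal of $\corona{\mathfrak{I}}$ generated by $\tau_{\mathfrak{e}}(a)$. Hence that ideal contains the norm-full element $\tau_{\mathfrak{e}}(\bar{p})$ and is therefore all of $\corona{\mathfrak{I}}$; equivalently, $\tau_{\mathfrak{e}}(a)$ is norm-full. Since $a$ was an arbitrary nonzero element of $\mathfrak{A}/\mathfrak{I}$, the extension $\mathfrak{e}$ is full. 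The substantive inputs are the two standard consequences of real rank zero, namely its inheritance by ideals and quotients (together with the existence of projections in a nonzero real rank zero algebra) and the liftability of projections, so I expect no genuine obstacle here; the one point deserving care is verifying that passing through $\tau_{\mathfrak{e}}$ preserves membership in the generated ideal, which is precisely where continuity and multiplicativity of the Busby map are needed.
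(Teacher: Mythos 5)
Your proposal is correct and follows essentially the same route as the paper's proof: use real rank zero to find a nonzero projection in the ideal of $\mathfrak{A}/\mathfrak{I}$ generated by $a$, lift it to a projection in $\mathfrak{A}\setminus\mathfrak{I}$, apply the hypothesis, and conclude that the ideal of $\corona{\mathfrak{I}}$ generated by $\tau_{\mathfrak{e}}(a)$ contains a norm-full element and hence is everything. The only difference is cosmetic: you make explicit the facts (heredity of real rank zero and existence of nonzero projections in nonzero real rank zero algebras, plus $\tau_{\mathfrak{e}}$ mapping the generated ideal into the generated ideal) that the paper invokes implicitly.
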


\begin{proof}
Since $\mathfrak{A}$ has real rank zero, by the results in \cite{realrank}, $\mathfrak{A} / \mathfrak{I}$ has real rank zero and every projection in $\mathfrak{A} / \mathfrak{I}$ lifts to a projection in $\mathfrak{A}$.  Let $a$ be a nonzero element in $\mathfrak{A} / \mathfrak{I}$.  Let $\mathfrak{D}_{1}$ be the ideal of $\mathfrak{A} / \mathfrak{I}$ generated by $a$ and let $\mathfrak{D}_{2}$ be the ideal of $\corona{\mathfrak{I}}$ generated by $\tau_{ \mathfrak{e} } ( a )$.  Note that $\tau_{ \mathfrak{e} } ( \mathfrak{D}_{1} ) \subseteq \mathfrak{D}_{2}$.  Since $\mathfrak{A}$ has real rank zero, there exists a projection $p \in \mathfrak{A} \setminus \mathfrak{I}$ such that $\pi_{ \mathfrak{I}, \mathfrak{A} } ( p ) \in \mathfrak{D}_{1}$.  Hence, $\tau_{ \mathfrak{e} } ( \pi_{ \mathfrak{I}, \mathfrak{A} } (p) ) \in \mathfrak{D}_{2}$.  Since $\tau_{ \mathfrak{e} } ( \pi_{ \mathfrak{I}, \mathfrak{A} } (p) )$ is a norm-full projection in $\corona{\mathfrak{I} }$, we have that $\mathfrak{D}_{2} = \corona{ \mathfrak{I} }$.  Hence, $\mathfrak{e}$ is a full extension.
\end{proof}

\begin{lemma}\label{l:fullher}
Let $\mathfrak{A}$ be a $C^{*}$-algebra and let $a \in \mathfrak{A}_{+}$.  If $a$ is norm-full in $\mathfrak{A}$ and $\mathfrak{I}$ is an ideal of $\mathfrak{A}$, then $\overline{ a \mathfrak{I} a }$ is norm-full in $\mathfrak{I}$.
\end{lemma}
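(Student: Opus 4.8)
The plan is to unwind the definition of norm-fullness. Saying that $\overline{a\mathfrak{I}a}$ is norm-full in $\mathfrak{I}$ amounts to saying that the smallest closed two-sided ideal of $\mathfrak{I}$ containing $a\mathfrak{I}a$, namely $\overline{\mathfrak{I}(a\mathfrak{I}a)\mathfrak{I}}$, is all of $\mathfrak{I}$. Since $a\mathfrak{I}a \subseteq \mathfrak{I}$ this ideal is automatically contained in $\mathfrak{I}$, so the entire content is the reverse inclusion $\mathfrak{I} \subseteq \overline{\mathfrak{I}a\mathfrak{I}a\mathfrak{I}}$. As every element of $\mathfrak{I}$ is a linear combination of positive elements and the target is a closed linear subspace, it suffices to show $b \in \overline{\mathfrak{I}a\mathfrak{I}a\mathfrak{I}}$ for each $b \in \mathfrak{I}_{+}$.

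First I would record the one-$a$ version as a stepping stone, namely $\overline{\mathfrak{I}a\mathfrak{I}} = \mathfrak{I}$. Given $c \in \mathfrak{I}_{+}$, the norm-fullness of $a$ in $\mathfrak{A}$ means $\overline{\mathfrak{A}a\mathfrak{A}} = \mathfrak{A}$, so $c^{1/2}$ can be approximated by finite sums $\sum_{i} x_{i} a y_{i}$ with $x_{i}, y_{i} \in \mathfrak{A}$. Writing $c = c^{1/4} c^{1/2} c^{1/4}$ and substituting, $c$ is approximated by $\sum_{i} (c^{1/4} x_{i})\, a\, (y_{i} c^{1/4})$, and because $\mathfrak{I}$ is an ideal we have $c^{1/4} x_{i}, y_{i} c^{1/4} \in \mathfrak{I}$; the error is controlled by $\|c^{1/4}\|^{2}$ times the approximation error. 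Hence $c \in \overline{\mathfrak{I}a\mathfrak{I}}$, and density of positive elements gives $\mathfrak{I} \subseteq \overline{\mathfrak{I}a\mathfrak{I}}$, the reverse inclusion being clear.

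With this in hand the two-$a$ version follows by applying the same idea once more, now factoring through $\mathfrak{I}$ itself. For $b \in \mathfrak{I}_{+}$ I would write $b = (b^{1/2})^{*} b^{1/2}$ and use $\overline{\mathfrak{I}a\mathfrak{I}} = \mathfrak{I}$ to approximate the self-adjoint element $b^{1/2}$ by a finite sum $\sum_{i} c_{i} a d_{i}$ with $c_{i}, d_{i} \in \mathfrak{I}$. Since $a = a^{*}$, the product then approximates $b$ by $\sum_{i,j} d_{i}^{*}\, a\, (c_{i}^{*} c_{j})\, a\, d_{j}$, where each middle factor $a(c_{i}^{*}c_{j})a$ lies in $a\mathfrak{I}a$ while the outer factors $d_{i}^{*}, d_{j}$ lie in $\mathfrak{I}$; the standard estimate $\|b - S^{*}S\| \leq \|b^{1/2}\|\,\varepsilon + \varepsilon\,\|S\|$ (with $S = \sum_{j} c_{j} a d_{j}$) keeps this under control. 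Thus $b \in \overline{\mathfrak{I}(a\mathfrak{I}a)\mathfrak{I}}$, which completes the argument.

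The estimates involved are routine $C^{*}$-algebra manipulations, so I do not anticipate a genuine obstacle. The only point requiring care is the bookkeeping that guarantees every factor produced lands on the correct side of the ideal, which is exactly where the defining property $\mathfrak{A}\mathfrak{I},\, \mathfrak{I}\mathfrak{A} \subseteq \mathfrak{I}$ is used: first to pull the approximants of $c^{1/2}$ into $\mathfrak{I}a\mathfrak{I}$, and then to keep the double product inside $\mathfrak{I}(a\mathfrak{I}a)\mathfrak{I}$. The one genuine subtlety is that producing $a$ on \emph{both} sides of the middle term forces the two-step structure above, rather than a single direct application of fullness.
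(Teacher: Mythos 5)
Your proof is correct, but the mechanism in the crucial step is genuinely different from the paper's. Both arguments open identically: norm-fullness of $a$ in $\mathfrak{A}$ lets one approximate a square root by finite sums $\sum_i x_i a y_i$ with $x_i, y_i \in \mathfrak{A}$, and sandwiching with fourth roots pushes the coefficients into $\mathfrak{I}$, so that every positive element of $\mathfrak{I}$ lies in the closed span of $\mathfrak{I} a \mathfrak{I}$. The divergence is in how an $a$ is manufactured on \emph{both} sides. The paper works with $b$ directly, writes $a = a^{1/2}a^{1/2}$, and inserts an approximate-identity element $e$ of $\mathfrak{I}$ between the two halves, so its approximants have middle factors $a^{1/2} e a^{1/2}$; it then asserts that these lie in the ideal of $\mathfrak{I}$ generated by $\overline{a\mathfrak{I}a}$. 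Strictly speaking, that assertion needs the extra (true, but unproved there) observation that $a^{1/2}\mathfrak{I}a^{1/2}$ and $a\mathfrak{I}a$ generate the same closed ideal of $\mathfrak{I}$ --- for instance because $\alpha\beta\alpha = 0$ if and only if $\alpha^{1/2}\beta\alpha^{1/2} = 0$ for positive elements of any quotient. Your squaring trick --- approximate $b^{1/2}$ by $S \in \operatorname{span}(\mathfrak{I} a \mathfrak{I})$ and use $b \approx S^{*}S$, whose terms $d_i^{*}\,(a\,c_i^{*}c_j\,a)\,d_j$ visibly lie in $\mathfrak{I}(a\mathfrak{I}a)\mathfrak{I}$ --- sidesteps both the approximate-identity insertion and this half-power subtlety, at the cost of one extra approximation layer. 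So your route is slightly longer but fully self-contained, while the paper's one-pass argument is shorter but leans on an ideal-equality it does not spell out.
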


\begin{proof}
Let $b \in \mathfrak{I}$ be a non-zero positive element and $\| b \| \leq 1$.  Let $\epsilon > 0$.  Since $a$ is norm-full in $\mathfrak{A}$, there exist  $n \in \N$ and $x_{i}, y_{i} \in \mathfrak{A}$ such that 
\begin{align*}
\left\| b^{ \frac{1}{2} } - \sum_{ i = 1}^{n} x_{i} a y_{i} \right\| < \frac{\epsilon}{2}.
\end{align*}
Then 
\begin{align*}
\left\| b - \sum_{ i = 1}^{n} b^{\frac{1}{4}}x_{i} a y_{i} b^{ \frac{1}{4} } \right\| < \frac{\epsilon}{2}. 
\end{align*}
Choose $e \in \mathfrak{I}$ such that 
\begin{align*}
 \left\| \sum_{ i = 1}^{n} b^{\frac{1}{4}}x_{i} a y_{i} b^{ \frac{1}{4} } - \sum_{ i = 1}^{n} b^{ \frac{1}{4} } x_{i} a^{ \frac{1}{2} } e a^{ \frac{1}{2} } y_{i} b^{ \frac{1}{4} }  \right\| < \frac{\epsilon}{2}.
\end{align*}
This can be done since $b^{ \frac{1}{4} } x_{i} a^{ \frac{1}{2} } \in \mathfrak{I}$.  Thus,
\begin{align*}
&\left\| b -  \sum_{ i = 1}^{n} b^{ \frac{1}{4} } x_{i} a^{ \frac{1}{2} } ea^{ \frac{1}{2} } y_{i} b^{ \frac{1}{4} } \right\| \\
&\qquad \leq \left\| b - \sum_{ i = 1}^{n} b^{\frac{1}{4}}x_{i} a y_{i} b^{ \frac{1}{4} } \right\| + \left\| \sum_{ i = 1}^{n} b^{\frac{1}{4}}x_{i} a y_{i} b^{ \frac{1}{4} }  -  \sum_{ i = 1}^{n} b^{ \frac{1}{4} } x_{i} a^{ \frac{1}{2} } e a^{ \frac{1}{2} } y_{i} b^{ \frac{1}{4} } \right\| \\
&\qquad  < \epsilon.
\end{align*}
Note that $\sum_{ i = 1}^{n} b^{ \frac{1}{4} } x_{i} a^{ \frac{1}{2} } e a^{ \frac{1}{2} } y_{i} b^{ \frac{1}{4} }$ is in the ideal of $\mathfrak{I}$ generated by $\overline{ a \mathfrak{I} a }$.  We have just shown that every non-zero positive element with norm less than or equal to 1 is in the ideal of $\mathfrak{I}$ generated by $\overline{ a \mathfrak{I} a }$.

Let $b \in \mathfrak{I}$ be non-zero.  Since $b$ and $\frac{ b^{*}b }{ \| b \|^{2} }$ generate the same ideal, we have that $b$ is in the ideal of $\mathfrak{I}$ generated by $\overline{ a \mathfrak{I} a }$.
\end{proof}

We are now ready to prove the converse of Proposition~\ref{p:fullkthy} with the additional assumption that the algebra has the stable weak cancellation property. 

\begin{theor}\label{t:full}
Let $\mathfrak{A}$ be a separable $C^{*}$-algebra with real rank zero.  Let $\mathfrak{I}$ be an ideal of $\mathfrak{A}$ such that $\mathfrak{I}$ satisfies the corona factorization property and $\mathfrak{I}$ is stable.  Consider the extension
\begin{equation*}
\mathfrak{e} : 0 \to \mathfrak{I} \to \mathfrak{A}  \to \mathfrak{A} / \mathfrak{I}  \to 0.
\end{equation*}
Suppose $\mathfrak{A}$ has stable weak cancellation and for all ideals $\mathfrak{D}$ of $\mathfrak{A}$ with $\mathfrak{I} \subsetneq \mathfrak{D}$, $\mathfrak{D} / \mathfrak{I}$ has cancellation of projections or $\mathfrak{D} / \mathfrak{I}$ has a norm-full properly infinite projection.  Then $\mathfrak{e}$ is full if and only if $\mathfrak{e}$ is stenotic and for all ideals $\mathfrak{D}$ of $\mathfrak{A}$ with $\mathfrak{I} \subsetneq \mathfrak{D}$, 
\begin{equation*}
0 \to \mathfrak{I} \to \mathfrak{D} \to \mathfrak{D} / \mathfrak{I} \to 0 
\end{equation*}
is $K$-lexicographic.
\end{theor}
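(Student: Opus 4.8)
The plan is to handle the two implications separately; the forward direction is essentially already available. If $\mathfrak{e}$ is full, then Proposition~\ref{p:sepid} applies (its hypotheses hold: $\mathfrak{I}$ is stable, and being an ideal of a real rank zero algebra it has an approximate identity of projections) and shows $\mathfrak{e}$ is stenotic, while Proposition~\ref{p:fullkthy} shows that $0\to\mathfrak{I}\to\mathfrak{D}\to\mathfrak{D}/\mathfrak{I}\to0$ is $K$-lexicographic for every ideal $\mathfrak{D}$ with $\mathfrak{I}\subsetneq\mathfrak{D}$. Thus for the \emph{only if} direction I would simply cite these two results; note that stable weak cancellation plays no role here.

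For the \emph{if} direction I would first pass to the stabilisation: since $\mathfrak{I}$ is stable, $\mathfrak{e}$ is full exactly when $\mathfrak{e}^{s}$ is, and all hypotheses survive stabilisation, so I work throughout in the stable algebras $\mathfrak{A}\otimes\K$, $\mathfrak{I}\otimes\K$ and $\mathfrak{D}\otimes\K$, where every class in $K_{0}$ is represented by an honest projection. By Lemma~\ref{l:rr0full} it then suffices to prove that $\tau_{\mathfrak{e}^{s}}(\pi^{s}_{\mathfrak{I},\mathfrak{A}}(p))$ is norm-full in $\corona{\mathfrak{I}\otimes\K}$ for each projection $p\in(\mathfrak{A}\otimes\K)\setminus(\mathfrak{I}\otimes\K)$. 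Fixing such a $p$, letting $\mathfrak{D}$ be the ideal it generates (so $\mathfrak{I}\subsetneq\mathfrak{D}$ by stenosis), my goal is to verify the two hypotheses of Lemma~\ref{lem:projfull} for the projection $\sigma_{\mathfrak{e}^{s}}(p)\in\multialg{\mathfrak{I}\otimes\K}$, thereby obtaining $\sigma_{\mathfrak{e}^{s}}(p)\sim1_{\multialg{\mathfrak{I}\otimes\K}}$; applying the quotient map then gives $\tau_{\mathfrak{e}^{s}}(\pi^{s}(p))\sim1_{\corona{\mathfrak{I}\otimes\K}}$, which is norm-full, as desired.

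The first hypothesis of Lemma~\ref{lem:projfull} is routine: $p$ is norm-full in $\mathfrak{D}\otimes\K$, so $\sigma_{\mathfrak{e}^{s}}(p)(\mathfrak{I}\otimes\K)\sigma_{\mathfrak{e}^{s}}(p)=\theta_{\mathfrak{I}\otimes\K}(p(\mathfrak{I}\otimes\K)p)$ is norm-full in $\mathfrak{I}\otimes\K$ by Lemma~\ref{l:fullher}. The real content is the comparison hypothesis: for all projections $e,f\in\mathfrak{I}\otimes\K$ with $f\le\sigma_{\mathfrak{e}^{s}}(p)$ one must show $e\lesssim\sigma_{\mathfrak{e}^{s}}(p)-f$ in $\multialg{\mathfrak{I}\otimes\K}$. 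Writing $f=\theta_{\mathfrak{I}\otimes\K}(f_{0})$ with $f_{0}\le p$ (Lemma~\ref{l:essentialmv}) and $q'=p-f_{0}$, a projection outside $\mathfrak{I}\otimes\K$ generating the same ideal $\mathfrak{D}$ (Lemma~\ref{l:sepid}), this becomes $e\lesssim\sigma_{\mathfrak{e}^{s}}(q')$. Since every projection of $\mathfrak{I}\otimes\K$ is subequivalent to a norm-full one, I may assume $e$ is itself norm-full in $\mathfrak{I}\otimes\K$. Now comes the decisive use of the $K$-lexicographic hypothesis: because $\pi^{s}(q')$ is a nonzero norm-full projection in $(\mathfrak{D}/\mathfrak{I})\otimes\K$, the image of $[q']$ in $K_{0}(\mathfrak{D}/\mathfrak{I})$ is positive and nonzero, so the lexicographic description of $K_{0}(\mathfrak{D})_{+}$ forces $K_{0}(\iota_{\mathfrak{I},\mathfrak{D}})(x)\le[q']$ for \emph{every} $x\in K_{0}(\mathfrak{I})_{+}$, irrespective of its size; in particular $[e]\le[q']$ in $K_{0}(\mathfrak{D})$. (In the case where $\mathfrak{D}/\mathfrak{I}$ carries a norm-full properly infinite projection, part~(2) of $K$-lexicographicity gives $K_{0}(\mathfrak{D})=K_{0}(\mathfrak{D})_{+}$ and the inequality is automatic.) This is the $K$-theoretic shadow of fullness: a projection with nonzero quotient image lexicographically dominates the entire positive cone of the ideal.

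It remains to upgrade the inequality $[e]\le[q']$ in $K_{0}(\mathfrak{D})$ to a genuine subequivalence $e\lesssim q'$ in $\mathfrak{D}\otimes\K$, and I expect this to be the main obstacle. Writing $[q']-[e]=[h]$ for a projection $h$ in $\mathfrak{D}\otimes\K$ (positive by the previous step), I would arrange that $h$ is chosen so that $e\oplus h$ is norm-full in $\mathfrak{D}\otimes\K$: when $\mathfrak{D}/\mathfrak{I}$ has cancellation of projections, the relation $[\pi^{s}(h)]=[\pi^{s}(q')]$ together with cancellation in $(\mathfrak{D}/\mathfrak{I})\otimes\K$ makes $\pi^{s}(h)$, hence $e\oplus h$, norm-full; when $\mathfrak{D}/\mathfrak{I}$ has a norm-full properly infinite projection, $K_{0}(\mathfrak{D})=K_{0}(\mathfrak{D})_{++}$ lets me take $h$ norm-full outright. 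Since $e\oplus h$ and $q'$ then generate the same ideal $\mathfrak{D}\otimes\K$ and have the same class $[q']$, the stable weak cancellation of $\mathfrak{A}$, inherited by $\mathfrak{D}$ through Lemma~\ref{l:swkc}, yields $e\oplus h\sim q'$ and hence $e\lesssim q'$. Extracting a subprojection $g\le q'$ with $g\sim e$, noting $g$ lands in the ideal $\mathfrak{I}\otimes\K$ and invoking Lemma~\ref{l:her} to get $g\sim e$ \emph{in} $\mathfrak{I}\otimes\K$, I conclude $\theta_{\mathfrak{I}\otimes\K}(e)\sim\theta_{\mathfrak{I}\otimes\K}(g)\le\sigma_{\mathfrak{e}^{s}}(q')$, that is $e\lesssim\sigma_{\mathfrak{e}^{s}}(q')$, completing the verification of Lemma~\ref{lem:projfull}. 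The delicate points I would watch are the ideal-generation bookkeeping required to legitimately invoke weak cancellation (equality of $K_{0}$-classes alone is insufficient) and the clean handling of the two cases for $\mathfrak{D}/\mathfrak{I}$.
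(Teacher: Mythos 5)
Your overall strategy is the paper's own: the only-if direction is exactly Propositions~\ref{p:sepid} and~\ref{p:fullkthy}, and in the if direction your central mechanism (use the lexicographic cone to realize $[q']-[e]$ as the class of a projection $h$, arrange that $e\oplus h$ and $q'$ generate the same ideal, apply stable weak cancellation plus Lemma~\ref{l:her} to get $e\lesssim q'$, then finish with Lemma~\ref{l:fullher}, Lemma~\ref{lem:projfull} and Lemma~\ref{l:rr0full}) is precisely the argument in the paper, including the two-case split. However, as written there is a genuine gap at the very first step of your if direction: you assert that, $\mathfrak{I}$ being stable, ``$\mathfrak{e}$ is full exactly when $\mathfrak{e}^{s}$ is,'' and your argument only establishes fullness of $\mathfrak{e}^{s}$, so you need the implication ($\mathfrak{e}^{s}$ full $\Rightarrow$ $\mathfrak{e}$ full). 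That implication is not available to you: Proposition~1.6 of \cite{ERRshift} gives only the opposite direction, and within this paper the direction you need is exactly the content of Corollary~\ref{c:fullstable}, which is \emph{deduced from} Theorem~\ref{t:full}; invoking it here is circular. The step is repairable in two ways: either prove the descent directly (identify $\corona{\mathfrak{I}}$ with the hereditary corner $\bar{e}_{11}\corona{\mathfrak{I}\otimes\K}\bar{e}_{11}$, under which $\tau_{\mathfrak{e}}(\bar a)$ corresponds to $\tau_{\mathfrak{e}^{s}}(\bar a\otimes e_{11})$, and use that an element of a hereditary subalgebra which is norm-full in the ambient algebra is norm-full in the subalgebra), or, more simply, drop the stabilization altogether: since $\mathfrak{I}$ is already stable, run your verification for projections $p\in\mathfrak{A}\setminus\mathfrak{I}$ and $e,f\in\mathfrak{I}$, applying Lemma~\ref{lem:projfull} to $\sigma_{\mathfrak{e}}(p)\in\multialg{\mathfrak{I}}$; all the $K$-theory still takes place in $\mathfrak{D}\otimes\K$. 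The latter is what the paper does, and it makes the detour (and the unproven equivalence) unnecessary.

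A second, smaller defect: your claim that every projection of $\mathfrak{I}\otimes\K$ is subequivalent to a norm-full one is false in general. For instance $c_{0}\otimes\K$ is stable, has real rank zero (and even the corona factorization property), yet contains no norm-full projection at all, since any projection in it is supported on finitely many coordinates; so this reduction cannot be justified from the hypotheses. Fortunately it is not needed: for an arbitrary projection $e\in\mathfrak{I}\otimes\K$ the lexicographic step still gives $[e]\le[q']$, and the ideal-generation bookkeeping for $e\oplus h$ versus $q'$ does not require $e$ to be full. In the cancellation case, $[\pi^{s}(h)]=[\pi^{s}(q')]\neq 0$ forces $h\notin\mathfrak{I}\otimes\K$, so stenosis (via Lemma~\ref{l:sepid}) shows the ideal generated by $h$ contains $\mathfrak{I}\otimes\K$ and maps onto the ideal generated by $\pi^{s}(q')$, hence equals $\mathfrak{D}\otimes\K$; in the properly infinite case you may take $h$ norm-full outright since $K_{0}(\mathfrak{D})=K_{0}(\mathfrak{D})_{++}$. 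This is exactly how the paper handles the bookkeeping, so you should simply delete the false claim rather than rely on it.
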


\begin{proof}
Note that the ``only if'' follows from Proposition~\ref{p:sepid} and Proposition~\ref{p:fullkthy}.  Suppose $\mathfrak{e}$ is stenotic and for all ideals $\mathfrak{D}$ of $\mathfrak{A}$ with $\mathfrak{I} \subsetneq \mathfrak{D}$, 
\begin{equation*}
0 \to \mathfrak{I} \to \mathfrak{D} \to \mathfrak{D} / \mathfrak{I} \to 0 
\end{equation*}
is $K$-lexicographic.

Let $p$ be a projection in $\mathfrak{A} \setminus \mathfrak{I}$.  Let $e$ and $f$ be projections in $\mathfrak{I}$ such that $f \leq p$.  Let $\mathfrak{D}$ be the ideal of $\mathfrak{A}$ generated by $p$.  Since $p $ is not in $\mathfrak{I}$, $\mathfrak{I} \subsetneq \mathfrak{D}$.  Therefore, $\mathfrak{D} / \mathfrak{I}$ has cancellation of projections or has a norm-full properly infinite projection.  

Suppose $\mathfrak{D} / \mathfrak{I}$ has cancellation of projections.  Therefore, $K_{0} ( \pi_{ \mathfrak{I} , \mathfrak{D} } ) ( [ (p - f) \otimes e_{11} ] - [ e  \otimes e_{11} ] ) = K_{0} ( \pi_{\mathfrak{I}, \mathfrak{D} } ) ( [ p \otimes e_{11} ] ) > 0$ in $K_{0} ( \mathfrak{D} / \mathfrak{I} )$ since $\mathfrak{D} / \mathfrak{I}$ has cancellation of projections.  Since 
\begin{equation*}
0 \to \mathfrak{I} \to \mathfrak{D} \to \mathfrak{D} / \mathfrak{I} \to 0 
\end{equation*}
is $K$-lexicographic and $K_{0} ( \mathfrak{D} / \mathfrak{I} )_{+} \neq K_{0} ( \mathfrak{D} / \mathfrak{I} )$, there exists a projection $q \in \mathfrak{D} \otimes \K$ such that $[ (p - f) \otimes e_{11} ] - [ e \otimes e_{11} ] = [ q ]$ in $K_{0} ( \mathfrak{D} )$.  Since $K_{0} ( \pi_{ \mathfrak{I} , \mathfrak{D} } )( [ q ] ) = K_{0} ( \pi_{ \mathfrak{I} , \mathfrak{D} } ) ( [ (p - f) \otimes e_{11} ] - [ e  \otimes e_{11} ] ) = K_{0} ( \pi_{\mathfrak{I}, \mathfrak{D} } ) ( [ p \otimes e_{11} ] )  > 0$, $q \notin \mathfrak{I} \otimes \K$.  Since $\mathfrak{D} / \mathfrak{I}$ has cancellation of projections, $\pi_{ \mathfrak{I} , \mathfrak{D} }^{s} ( q ) \sim \pi_{ \mathfrak{I} , \mathfrak{D} }^{s} ( (p - f )\otimes e_{11} ) \sim \pi_{ \mathfrak{I} , \mathfrak{D} }^{s} ( p \otimes e_{11} )$.

Note that $\mathfrak{e}^{s}$ is stenotic since $\mathfrak{e}$ is stenotic.  Choose orthogonal projections $q_{1}$ and $e_{1}$ in $\mathfrak{D} \otimes \K$ such that $q_{1} \sim q$ and $e_{1} \sim e \otimes e_{11}$.  Since $\pi_{ \mathfrak{I} , \mathfrak{D} }^{s} ( q ) \sim \pi_{ \mathfrak{I} , \mathfrak{D} }^{s} ( (p - f )\otimes e_{11} ) \sim \pi_{ \mathfrak{I} , \mathfrak{D} }^{s} ( p \otimes e_{11} )$, the ideal of $(\mathfrak{A} / \mathfrak{I} ) \otimes \K$ generated by $\pi_{\mathfrak{I} , \mathfrak{A} }^{s} ( (p-f) \otimes e_{11} )$ is $( \mathfrak{D} / \mathfrak{I} ) \otimes \K$ and the ideal of $( \mathfrak{A} / \mathfrak{I} ) \otimes \K$ generated by $\pi_{ \mathfrak{I}, \mathfrak{A} }^{s} ( q_{1} + e_{1} )$ is $( \mathfrak{D} / \mathfrak{I} ) \otimes \K$.  Hence, by Lemma~\ref{l:sepid}, we have that $(p-f) \otimes e_{11}$ and $q_{1} + e_{1}$ generate the same ideal in $\mathfrak{A} \otimes \K$, which is $\mathfrak{D} \otimes \K$.  Since $[ (p-f) \otimes e_{11} ] = [ q_{1} + e_{1} ]$ in $K_{0} ( \mathfrak{D} )$ and since $\mathfrak{A}$ has stable weak cancellation, we have that $( p -f ) \otimes e_{11} \sim (q_{1} + e_{1} )$ in $\mathfrak{D} \otimes \K$.  Thus, by Lemma~\ref{l:her}, $e \otimes e_{11} \lesssim (p-f) \otimes e_{11}$ in $\mathfrak{D} \otimes e_{11}$.  Hence, $e \lesssim p-f$ in $\mathfrak{D}$. 

Since $p$ is norm-full in $\mathfrak{D}$ and $\mathfrak{I} \subseteq \mathfrak{D}$, by Lemma~\ref{l:fullher}, $p  \mathfrak{I} p$ is norm-full in $\mathfrak{I}$.  Therefore, by Lemma~\ref{lem:projfull}, $\sigma_{ \mathfrak{e} } (p)$ is a norm-full projection in $\multialg{ \mathfrak{I} }$.  We have just shown that for every projection $p \in \mathfrak{A} \setminus \mathfrak{I}$, $\sigma_{ \mathfrak{e} } ( p )$ is a norm-full projection in $\multialg{ \mathfrak{I} }$.  Therefore, by Lemma~\ref{l:rr0full}, $\mathfrak{e}$ is a full extension.

Suppose $\mathfrak{D} / \mathfrak{I}$ has a norm-full properly infinite projection.  Then by Proposition~\ref{p:cuntz}, $K_{0} ( \mathfrak{D} / \mathfrak{I} ) = K_{0} ( \mathfrak{D} / \mathfrak{I} )_{+} = K_{0} ( \mathfrak{D} / \mathfrak{I} )_{++}$.  Since $\pi_{ \mathfrak{I}, \mathfrak{A} } ( p ) = \pi_{ \mathfrak{I}, \mathfrak{A} } ( p - f )$ and $\mathfrak{e}$ is stenotic, by Lemma~\ref{l:sepid}, $p-f$ is a norm-full projection in $\mathfrak{D}$.  Note that $[ p - f ] - [ e ] \in K_{0} ( \mathfrak{D} )$.  Since 
\begin{equation*}
0 \to \mathfrak{I} \to \mathfrak{D} \to \mathfrak{D} / \mathfrak{I} \to 0 
\end{equation*}
is $K$-lexicographic and $K_{0} ( \mathfrak{D} / \mathfrak{I} ) = K_{0} ( \mathfrak{D} / \mathfrak{I} )_{+} = K_{0} ( \mathfrak{D} / \mathfrak{I} )_{++}$, there exists a norm-full projection $q \in \mathfrak{D}$ such that $[ p - f ] - [e] = [q]$ in $K_{0} ( \mathfrak{D} )$.  Note that $( p - f ) \otimes e_{11}$ and $q \otimes e_{22} + e \otimes e_{11}$ are norm-full projections in $\mathfrak{D} \otimes \K$ with $[ ( p - f ) \otimes e_{11} ] = [ q \otimes e_{22} +  e \otimes e_{11} ]$ in $K_{0} ( \mathfrak{D} )$.  Since $\mathfrak{A}$ has stable weak cancellation, $( p  -f  ) \otimes e_{11} \sim ( q \otimes e_{22} + e \otimes e_{11} )$ in $\mathfrak{D} \otimes \K$.  Hence, by Lemma~\ref{l:her}, $e \otimes e_{11} \lesssim ( p - f ) \otimes e_{11}$ in $\mathfrak{D} \otimes e_{11}$.  Thus, $e \lesssim p-f$ in $\mathfrak{D}$.  Since $p$ is norm-full in $\mathfrak{D}$ and $\mathfrak{I} \subseteq \mathfrak{D}$, by Lemma~\ref{l:fullher}, $p \mathfrak{I} p$ is norm-full in $\mathfrak{I}$. Therefore, by Lemma~\ref{lem:projfull}, $\sigma_{ \mathfrak{e} } (p)$ is norm-full in $\multialg{ \mathfrak{I} }$.  By Lemma~\ref{l:rr0full}, $\mathfrak{e}$ is a full extension.
\end{proof}

The following corollary shows that if $\mathfrak{A}$ has stable rank one, then fullness is completely determined by $K_{0} ( \mathfrak{A} )_{+}$.

\begin{corol}\label{cor:str1}
Let $\mathfrak{A}$ be a separable $C^{*}$-algebra with real rank zero and stable rank one.  Let $\mathfrak{I}$ be an ideal of $\mathfrak{A}$ such that $\mathfrak{I}$ has the corona factorization property and $\mathfrak{I}$ is stable.   Consider the extension
\begin{equation*}
\mathfrak{e} : 0 \to \mathfrak{I} \to \mathfrak{A} \to \mathfrak{A} / \mathfrak{I}  \to 0.
\end{equation*}
Then $\mathfrak{e}$ is full if and only if $\mathfrak{e}$ is stenotic and
\begin{equation*}
0 \to \mathfrak{I} \to \mathfrak{A} \to \mathfrak{A} / \mathfrak{I} \to 0 
\end{equation*}
is $K$-lexicographic.
\end{corol}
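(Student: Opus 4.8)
The plan is to deduce this directly from Theorem~\ref{t:full}, so the work splits into two parts: checking that stable rank one makes all of that theorem's auxiliary hypotheses automatic, and then collapsing the family of conditions ``for all ideals $\mathfrak{D}$ with $\mathfrak{I} \subsetneq \mathfrak{D}$'' down to the single instance $\mathfrak{D} = \mathfrak{A}$. First I would record the permanence facts. Real rank zero and stable rank one are inherited by every ideal and every quotient, hence by every subquotient of $\mathfrak{A}$; combined with Corollary~6.5.7 of \cite{blackadarB}, this shows that $\mathfrak{A}$, the quotient $\mathfrak{A}/\mathfrak{I}$, and each $\mathfrak{D}$ and $\mathfrak{D}/\mathfrak{I}$ with $\mathfrak{I} \subsetneq \mathfrak{D}$ all have cancellation of projections. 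Since two projections with equal $K_{0}$-class over a common ideal become Murray--von Neumann equivalent after adjoining one more projection, cancellation of projections at once gives that every $\mathsf{M}_{n}(\mathfrak{A})$ has weak cancellation, i.e.\ $\mathfrak{A}$ has stable weak cancellation. Moreover each $\mathfrak{D}/\mathfrak{I}$ has cancellation of projections and therefore can never contain a norm-full properly infinite projection, so the dichotomy hypothesis of Theorem~\ref{t:full} is met with every subquotient in the cancellation branch. Theorem~\ref{t:full} thus applies and yields that $\mathfrak{e}$ is full if and only if $\mathfrak{e}$ is stenotic and every sub-extension $0 \to \mathfrak{I} \to \mathfrak{D} \to \mathfrak{D}/\mathfrak{I} \to 0$ (with $\mathfrak{I} \subsetneq \mathfrak{D}$) is $K$-lexicographic.

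It remains to prove that, under stable rank one, this whole family of $K$-lexicographic conditions is equivalent to its single instance $\mathfrak{D} = \mathfrak{A}$; the stenosis condition is identical in both statements and needs no attention. One implication is immediate by taking $\mathfrak{D} = \mathfrak{A}$. For the converse, fix $\mathfrak{I} \subsetneq \mathfrak{D} \subsetneq \mathfrak{A}$ and assume the extension over $\mathfrak{A}$ is $K$-lexicographic. Because $\mathfrak{D}/\mathfrak{I}$ has cancellation of projections we have $K_{0}(\mathfrak{D}/\mathfrak{I})_{+} \neq K_{0}(\mathfrak{D}/\mathfrak{I})$, so clause~(2) of Definition~\ref{d:klexi} is vacuous and only the lexicographic equality
\begin{align*}
K_{0}(\mathfrak{D})_{+} = K_{0}(\iota_{\mathfrak{I},\mathfrak{D}})(K_{0}(\mathfrak{I})_{+}) \sqcup \setof{ x \in K_{0}(\mathfrak{D}) }{ K_{0}(\pi_{\mathfrak{I},\mathfrak{D}})(x) > 0 }
\end{align*}
must be verified. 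Here the inclusion ``$\subseteq$'' together with the disjointness of the two right-hand sets follows routinely from Lemma~\ref{l:str1} (applied to the ideal $\mathfrak{I}$ of $\mathfrak{D}$) and exactness, and $K_{0}(\iota_{\mathfrak{I},\mathfrak{D}})(K_{0}(\mathfrak{I})_{+}) \subseteq K_{0}(\mathfrak{D})_{+}$ is clear; so the only genuine content is the containment
\begin{align*}
\setof{ x \in K_{0}(\mathfrak{D}) }{ K_{0}(\pi_{\mathfrak{I},\mathfrak{D}})(x) > 0 } \subseteq K_{0}(\mathfrak{D})_{+}.
\end{align*}

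To prove this I would transport the problem up to $\mathfrak{A}$. Given $x \in K_{0}(\mathfrak{D})$ with $K_{0}(\pi_{\mathfrak{I},\mathfrak{D}})(x) > 0$, I use the square of $K_{0}$-maps induced by the inclusions $\mathfrak{I} \subseteq \mathfrak{D} \subseteq \mathfrak{A}$ and the identification of $\mathfrak{D}/\mathfrak{I}$ as an ideal of $\mathfrak{A}/\mathfrak{I}$, which commutes in the sense $K_{0}(\pi_{\mathfrak{I},\mathfrak{A}}) \circ K_{0}(\iota_{\mathfrak{D},\mathfrak{A}}) = K_{0}(\iota_{\mathfrak{D}/\mathfrak{I},\mathfrak{A}/\mathfrak{I}}) \circ K_{0}(\pi_{\mathfrak{I},\mathfrak{D}})$. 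Since $\mathfrak{A}/\mathfrak{I}$ has cancellation of projections, Lemma~\ref{l:str1} makes $K_{0}(\iota_{\mathfrak{D}/\mathfrak{I},\mathfrak{A}/\mathfrak{I}})$ injective and positive, so it sends the strictly positive element $K_{0}(\pi_{\mathfrak{I},\mathfrak{D}})(x)$ to a strictly positive element of $K_{0}(\mathfrak{A}/\mathfrak{I})$; by commutativity this says $K_{0}(\pi_{\mathfrak{I},\mathfrak{A}})(K_{0}(\iota_{\mathfrak{D},\mathfrak{A}})(x)) > 0$. The $K$-lexicographic hypothesis over $\mathfrak{A}$ then forces $K_{0}(\iota_{\mathfrak{D},\mathfrak{A}})(x) \in K_{0}(\mathfrak{A})_{+}$, and the order-reflecting part of Lemma~\ref{l:str1} (now for the ideal $\mathfrak{D}$ of $\mathfrak{A}$) gives $x \in K_{0}(\mathfrak{D})_{+}$, completing the proof. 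The main, though modest, obstacle is exactly this compatibility step: one must guarantee that ``strict positivity in the quotient'' survives both the passage from $\mathfrak{D}/\mathfrak{I}$ into $\mathfrak{A}/\mathfrak{I}$ and the descent from $\mathfrak{A}$ back to $\mathfrak{D}$, and this is precisely what the injectivity and order-reflection supplied by cancellation of projections — hence by stable rank one — provide.
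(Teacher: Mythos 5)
Your proposal is correct and follows essentially the same route as the paper: it reduces to Theorem~\ref{t:full} by noting that stable rank one passes to all ideals and quotients (hence stable weak cancellation and the cancellation branch of the dichotomy hold automatically), and then shows that the single $K$-lexicographic condition for $\mathfrak{D}=\mathfrak{A}$ propagates to every intermediate ideal $\mathfrak{D}$ by pushing strict positivity up through $K_{0}(\iota_{\mathfrak{D}/\mathfrak{I},\mathfrak{A}/\mathfrak{I}})$ and reflecting positivity back down via Lemma~\ref{l:str1}, exactly as in the paper's proof. Your write-up is somewhat more explicit about the commuting square and about why clause~(2) of Definition~\ref{d:klexi} is vacuous, but the decomposition, the key lemma, and the three uses of Lemma~\ref{l:str1} coincide with the paper's argument.
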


\begin{proof}
Note that every ideal and quotient of $\mathfrak{A}$ are $C^{*}$-algebras with stable rank one, which implies every ideal and quotient of $\mathfrak{A}$ are $C^{*}$-algebras with cancellation of projections. Thus, every sub-quotient of $\mathfrak{A}$ has the stable weak cancellation property.  The ``only if'' direction follows from Theorem~\ref{t:full}.  We now prove the ``if'' direction.  We must show that for every ideal $\mathfrak{D}$ of $\mathfrak{A}$ with $\mathfrak{I} \subsetneq \mathfrak{D}$, the extension
\begin{equation*}
0 \to \mathfrak{I} \to \mathfrak{D} \to \mathfrak{D} / \mathfrak{I} \to 0 
\end{equation*}
is $K$-lexicographic.  

Since 
\begin{equation*}
0 \to \mathfrak{I} \to \mathfrak{A} \to \mathfrak{A} / \mathfrak{I} \to 0 
\end{equation*}
is $K$-lexicographic and $K_{0} ( \mathfrak{A} / \mathfrak{I} )_{+} \neq K_{0} ( \mathfrak{A} / \mathfrak{I} )$, we have that 
\begin{align*}
K_{0} ( \mathfrak{A} )_{+} =   K_{0}(\iota_{ \mathfrak{I}, \mathfrak{A} } ) ( K_{0} ( \mathfrak{I} )_{+} )  \sqcup \setof{ x \in K_{0} ( \mathfrak{A} ) }{ K_{0} ( \pi_{ \mathfrak{I} , \mathfrak{A} } ) ( x ) > 0 }.
\end{align*}  
Let $\mathfrak{D}$ be an ideal of $\mathfrak{A}$ such that $\mathfrak{I} \subsetneq \mathfrak{D}$.  Since $\mathfrak{D} / \mathfrak{I}$ has stable rank one, $K_{0} ( \mathfrak{D} / \mathfrak{I} ) \neq K_{0} ( \mathfrak{D} / \mathfrak{I} )_{+}$.  Therefore, we must show that 
\begin{align*}
K_{0} ( \mathfrak{D} )_{+} =   K_{0}(\iota_{ \mathfrak{I}, \mathfrak{D} } ) ( K_{0} ( \mathfrak{I} )_{+} )  \sqcup \setof{ x \in K_{0} ( \mathfrak{D} ) }{ K_{0} ( \pi_{ \mathfrak{I} , \mathfrak{D} } ) ( x ) > 0 }.
\end{align*}
Since $\mathfrak{A}$ has stable rank one, by Lemma~\ref{l:kthyrr0},
\begin{equation*}
K_{0} ( \mathfrak{D} )_{+} \subseteq   K_{0}(\iota_{ \mathfrak{I}, \mathfrak{D} } ) ( K_{0} ( \mathfrak{I} )_{+} )  \sqcup \setof{ x \in K_{0} ( \mathfrak{D} ) }{ K_{0} ( \pi_{ \mathfrak{I} , \mathfrak{D} } ) ( x ) > 0 }.
\end{equation*}
It is clear that $K_{0}(\iota_{ \mathfrak{I}, \mathfrak{D} } ) ( K_{0} ( \mathfrak{I} )_{+} ) \subseteq K_{0} ( \mathfrak{D} )_{+}$.  Let $x \in K_{0} ( \mathfrak{D} )$ such that $K_{0} ( \pi_{\mathfrak{I}, \mathfrak{D} } )( x )  > 0$.  By Lemma~\ref{l:str1}, $K_{0} ( \pi_{ \mathfrak{I} , \mathfrak{A} } ) \circ K_{0} ( \iota_{ \mathfrak{D} , \mathfrak{A} } )(x) > 0$. Thus, $K_{0} ( \iota_{ \mathfrak{D} , \mathfrak{A} } )( x ) \in K_{0} ( \mathfrak{A} )_{+}$.  Hence, by Lemma~\ref{l:str1}, $x \in K_{0} ( \mathfrak{D} )_{ + }$.  Therefore, 
\begin{equation*}
0 \to \mathfrak{I} \to \mathfrak{D} \to \mathfrak{D} / \mathfrak{I} \to 0 
\end{equation*}
is $K$-lexicographic.  By Theorem~\ref{t:full}, $\mathfrak{e}$ is a full extension.
\end{proof}

We now show, under some hypotheses, that if $\mathfrak{A} / \mathfrak{I}$ is a purely infinite simple $C^{*}$-algebra, then $\mathfrak{e}$ is full precisely when $\mathfrak{e}$ is stenotic and $K_{0} ( \mathfrak{A} )_{+} = K_{0} ( \mathfrak{A} )$.  We first need the following proposition which shows that $\mathfrak{A}$ has a norm-full properly infinite projection  precisely when $\mathfrak{A}$ has a norm-full projection and $K_{0} ( \mathfrak{A} ) = K_{0} ( \mathfrak{A} )_{+}$. 

\begin{propo}\label{p:properlyinf}
Let $\mathfrak{A}$ be a $C^{*}$-algebra with real rank zero and has the stable weak cancellation property.  Then the follow are equivalent.
\begin{itemize}
\item[(1)] Every norm-full projection in $\mathfrak{A}$ is properly infinite.  

\item[(2)] $\mathfrak{A}$ has a norm-full properly infinite projection.  

\item[(3)] $\mathfrak{A}$ has a norm-full projection and $K_{0} ( \mathfrak{A} ) = K_{0} ( \mathfrak{A} )_{+}$.
\end{itemize}
\end{propo}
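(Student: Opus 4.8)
The plan is to establish the cycle $(2)\Rightarrow(3)\Rightarrow(1)\Rightarrow(2)$. Two of these links are essentially free: $(2)\Rightarrow(3)$ is immediate from Proposition~\ref{p:cuntz}, since a norm-full properly infinite projection is in particular a norm-full projection and forces $K_{0}(\mathfrak{A})=K_{0}(\mathfrak{A})_{+}$; and $(1)\Rightarrow(2)$ holds the moment $\mathfrak{A}$ possesses at least one norm-full projection, as $(1)$ then declares that projection properly infinite. (Strictly, $(1)\Rightarrow(2)$ can only be read modulo the existence of a norm-full projection: if $\mathfrak{A}$ had none, $(1)$ would be vacuous while $(2)$ fails, so I would either invoke the standing assumption that such a projection is present or restrict attention to that case.) Hence essentially all the content is in the implication $(3)\Rightarrow(1)$.

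So assume $(3)$, namely that $\mathfrak{A}$ has a norm-full projection and $K_{0}(\mathfrak{A})=K_{0}(\mathfrak{A})_{+}$. Let $p$ be an arbitrary norm-full projection in $\mathfrak{A}$; I want to show $p\oplus p\lesssim p$. The first step is to manufacture a ``formal negative'' of $[p]$: since $-[p]\in K_{0}(\mathfrak{A})=K_{0}(\mathfrak{A})_{+}$, the very definition of the positive cone produces a projection $r\in\mathfrak{A}\otimes\K$ with $[r]=-[p]$ in $K_{0}(\mathfrak{A}\otimes\K)\cong K_{0}(\mathfrak{A})$.

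The key step is then to apply stable weak cancellation after absorbing $r$. Using the stability of $\mathfrak{A}\otimes\K$, choose a copy $r'\sim r$ orthogonal to $p\otimes e_{11}+p\otimes e_{22}$, and set $s_{1}=p\otimes e_{11}+p\otimes e_{22}+r'$ and $s_{2}=p\otimes e_{11}$. These are orthogonal sums, so $[s_{1}]=2[p]+[r]=[p]=[s_{2}]$ in $K_{0}(\mathfrak{A}\otimes\K)$. Moreover both generate the same ideal of $\mathfrak{A}\otimes\K$, namely all of it: $s_{2}=p\otimes e_{11}$ is norm-full because $p$ is norm-full in $\mathfrak{A}$, and $s_{1}\geq s_{2}$. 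Since $\mathfrak{A}$ has stable weak cancellation, $\mathfrak{A}\otimes\K$ has weak cancellation, and two projections generating the same ideal with the same $K_{0}$-class are Murray--von Neumann equivalent; thus $s_{1}\sim s_{2}$. Finally $p\oplus p=p\otimes e_{11}+p\otimes e_{22}\leq s_{1}\sim s_{2}=p\otimes e_{11}\sim p$, giving $p\oplus p\lesssim p$, i.e.\ $p$ is properly infinite. This proves $(3)\Rightarrow(1)$ and closes the cycle.

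I expect the main obstacle to be precisely the management of cancellation. Because $\mathfrak{A}$ is only assumed to have \emph{weak} cancellation, I cannot simply cancel a common summand to pass from equality of $K_{0}$-classes to equivalence; the device that makes the argument go through is to augment $p\oplus p$ by the projection $r$ realizing $-[p]$, which converts the problem into comparing two \emph{norm-full} projections $s_{1},s_{2}$ of equal class, exactly the situation weak cancellation handles. Two points require care: one must pass to $\mathfrak{A}\otimes\K$ (so it is \emph{stable} weak cancellation that is used) in order both to realize $r$ and to form the orthogonal sums, and one must verify the fullness of $s_{1}$ and $s_{2}$ so that they generate the \emph{same} ideal --- here the hypothesis that $p$ is norm-full is essential. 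The only genuinely delicate foundational point is the existence caveat in $(1)\Rightarrow(2)$ noted above.
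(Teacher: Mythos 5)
Your proof is correct and takes essentially the same route as the paper: the same cycle $(1)\Rightarrow(2)\Rightarrow(3)\Rightarrow(1)$, with all the substance in $(3)\Rightarrow(1)$, where both you and the paper augment $p\oplus p$ by an orthogonal projection of class $-[p]$ (your $r'$, the paper's $e$) to produce two norm-full projections with equal $K_0$-class and then invoke stable weak cancellation. Your caveat that $(1)\Rightarrow(2)$ needs the existence of a norm-full projection is a fair observation --- the paper simply declares this implication clear --- but it does not alter the argument or its applications.
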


\begin{proof}
(1) implies (2) is clear.  Suppose $\mathfrak{A}$ has a norm-full properly infinite projection.  Then by Proposition~\ref{p:cuntz}, $K_{0} ( \mathfrak{A} ) = K_{0} ( \mathfrak{A} )_{++}$ which implies that $K_{0} ( \mathfrak{A} ) = K_{0} ( \mathfrak{A} )_{+}$.  Thus, (2) implies (3).

Suppose $\mathfrak{A}$ has a norm-full projection and $K_{0} ( \mathfrak{A} ) = K_{0} ( \mathfrak{A} )_{+}$.  Let $p$ be a norm-full projection of $\mathfrak{A}$.  Set $p_{1} = p \otimes e_{11}$, $p_{2} = p \otimes e_{22}$, and $p_{3} = p \otimes e_{33}$.  Then $p_{i}$ is a norm-full projection in $\mathfrak{A} \otimes \K$ and $p_{i} \sim p_{j}$.  Since $K_{0} ( \mathfrak{A} ) = K_{0} ( \mathfrak{A} )_{+}$, there exists a projection $e \in \mathfrak{A} \otimes \K$ such that $[ p _{1} ] - [ p_{2} + p_{3} ] = [ e ]$.  Choose orthogonal projections $q_{1}$ and $q_{2}$ in $\mathfrak{A} \otimes \K$ such that $q_{1} \sim p_{2} + p_{3}$ and $q_{2} \sim e$.  Then $q_{1} + q_{2}$ is a projection in $\mathfrak{A} \otimes \K$ with $[ p_{1} ] = [ q_{1} + q_{2} ]$.  Since $p_{2} + p_{3}$ is a norm-full projection in $\mathfrak{A} \otimes \K$, we have that $q_{1}$ is a norm-full projection in $\mathfrak{A} \otimes \K$.  Hence, $q_{1} + q_{2}$ is a norm-full projection in $\mathfrak{A} \otimes \K$.  Since $[ p_{1} ] = [ q_{1} + q_{2} ]$ in $K_{0} ( \mathfrak{A} )$ and since $\mathfrak{A}$ has the stable weak cancellation property, we have that $p_{1} \sim q_{1} + q_{2}$ in $\mathfrak{A} \otimes \K$.  Thus, $p_{2} + p_{3} \sim q_{1} \lesssim p_{1}$.  We have just shown that $p \oplus p \lesssim p$.  Thus, $p$ is properly infinite.  So, $p$ is a norm-full properly infinite projection in $\mathfrak{A}$.  We have just shown that (3) implies (1).
\end{proof}

\begin{corol}\label{c:fullpurelyquot}
Let $\mathfrak{A}$ be a separable $C^{*}$-algebra with real rank zero.  Let $\mathfrak{I}$ be an ideal of $\mathfrak{A}$ such that $\mathfrak{I}$ satisfies the corona factorization property, $\mathfrak{I}$ is stable, $\mathfrak{I}$ has stable weak cancellation, and $\mathfrak{A} / \mathfrak{I}$ is a purely infinite simple $C^{*}$-algebra.  Then 
\begin{align*}
\mathfrak{e} : 0 \to \mathfrak{I} \to \mathfrak{A} \to \mathfrak{A} / \mathfrak{I} \to 0
\end{align*}
is a full extension if and only if $\mathfrak{e}$ is stenotic and $K_{0} ( \mathfrak{A} ) = K_{0} ( \mathfrak{A} )_{+}$.
\end{corol}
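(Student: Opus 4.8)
The plan is to reduce the statement to Theorem~\ref{t:full} by verifying its hypotheses, and then to translate the $K$-lexicographic condition into the single equation $K_{0}(\mathfrak{A}) = K_{0}(\mathfrak{A})_{+}$ using that the quotient is purely infinite and simple.

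First I would check that Theorem~\ref{t:full} applies. Real rank zero of $\mathfrak{A}$, stability of $\mathfrak{I}$, and the corona factorization property of $\mathfrak{I}$ are assumed. Since $\mathfrak{A}/\mathfrak{I}$ is purely infinite and simple, the classification of its projections by $K_{0}$ shows that $(\mathfrak{A}/\mathfrak{I}) \otimes \K$ has weak cancellation; that is, $\mathfrak{A}/\mathfrak{I}$ has stable weak cancellation. Combined with the hypothesis that $\mathfrak{I}$ has stable weak cancellation, Lemma~\ref{l:swkc} gives that $\mathfrak{A}$ has stable weak cancellation. Moreover, because $\mathfrak{A}/\mathfrak{I}$ is simple, the only ideal $\mathfrak{D}$ of $\mathfrak{A}$ with $\mathfrak{I} \subsetneq \mathfrak{D}$ is $\mathfrak{D} = \mathfrak{A}$, and any nonzero projection of the purely infinite simple algebra $\mathfrak{A}/\mathfrak{I}$ is a norm-full properly infinite projection. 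Hence all hypotheses of Theorem~\ref{t:full} hold, and it reduces the statement to showing that, in the presence of stenoticity, the extension $0 \to \mathfrak{I} \to \mathfrak{A} \to \mathfrak{A}/\mathfrak{I} \to 0$ is $K$-lexicographic if and only if $K_{0}(\mathfrak{A}) = K_{0}(\mathfrak{A})_{+}$.

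Next I would analyse the $K$-lexicographic condition of Definition~\ref{d:klexi}. Since $\mathfrak{A}/\mathfrak{I}$ has a norm-full properly infinite projection, Proposition~\ref{p:cuntz} gives $K_{0}(\mathfrak{A}/\mathfrak{I}) = K_{0}(\mathfrak{A}/\mathfrak{I})_{+} = K_{0}(\mathfrak{A}/\mathfrak{I})_{++}$. Consequently clause~(1) of Definition~\ref{d:klexi} is vacuous and clause~(2) becomes the single requirement $K_{0}(\mathfrak{A}) = K_{0}(\mathfrak{A})_{+} = K_{0}(\mathfrak{A})_{++}$. Thus being $K$-lexicographic is equivalent to $K_{0}(\mathfrak{A}) = K_{0}(\mathfrak{A})_{+} = K_{0}(\mathfrak{A})_{++}$, and it only remains to see that, under stenoticity, this is equivalent to the a priori weaker statement $K_{0}(\mathfrak{A}) = K_{0}(\mathfrak{A})_{+}$.

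The implication from $K_{0}(\mathfrak{A}) = K_{0}(\mathfrak{A})_{+} = K_{0}(\mathfrak{A})_{++}$ to $K_{0}(\mathfrak{A}) = K_{0}(\mathfrak{A})_{+}$ is trivial; the converse is where I expect the real content to lie, and the crux is to manufacture a norm-full projection in $\mathfrak{A}$. Assuming $\mathfrak{e}$ is stenotic and $K_{0}(\mathfrak{A}) = K_{0}(\mathfrak{A})_{+}$, I would pick a nonzero projection $q$ in $\mathfrak{A}/\mathfrak{I}$; as $\mathfrak{A}$ has real rank zero it lifts to a projection $p \in \mathfrak{A}$ by \cite{realrank}, and $p \notin \mathfrak{I}$. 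The ideal generated by $p$ is not contained in $\mathfrak{I}$, so by stenoticity it contains $\mathfrak{I}$; since its image in $\mathfrak{A}/\mathfrak{I}$ is nonzero and $\mathfrak{A}/\mathfrak{I}$ is simple, that ideal is all of $\mathfrak{A}$, whence $p$ is norm-full. Now $\mathfrak{A}$ has real rank zero and stable weak cancellation, has a norm-full projection, and satisfies $K_{0}(\mathfrak{A}) = K_{0}(\mathfrak{A})_{+}$, so Proposition~\ref{p:properlyinf} (in the form $(3) \Rightarrow (2)$) produces a norm-full properly infinite projection, and Proposition~\ref{p:cuntz} then yields $K_{0}(\mathfrak{A}) = K_{0}(\mathfrak{A})_{++}$, as required. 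Combining this equivalence with Theorem~\ref{t:full} (whose conclusion already packages stenoticity, so that it is available in both directions) completes the argument. The main obstacle is precisely the production of the norm-full projection and the resulting bridge from $K_{0}(\mathfrak{A})_{+}$ to $K_{0}(\mathfrak{A})_{++}$; everything else is bookkeeping with the results established above.
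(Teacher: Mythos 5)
Your proposal is correct and takes essentially the same approach as the paper's proof: verify the hypotheses of Theorem~\ref{t:full} (stable weak cancellation of $\mathfrak{A}$ via the purely infinite simple quotient and Lemma~\ref{l:swkc}, the quotient's norm-full properly infinite projections, and the fact that $\mathfrak{A}$ is the only ideal properly containing $\mathfrak{I}$), then reduce $K$-lexicographicity to $K_{0}(\mathfrak{A}) = K_{0}(\mathfrak{A})_{+}$ by lifting a projection from $\mathfrak{A}/\mathfrak{I}$, using stenoticity and simplicity to make it norm-full, and applying Propositions~\ref{p:properlyinf} and~\ref{p:cuntz}. The only cosmetic difference is that you route the ``only if'' direction through Theorem~\ref{t:full} itself, whereas the paper cites Propositions~\ref{p:sepid} and~\ref{p:fullkthy} directly, which is what that direction of the theorem rests on anyway.
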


\begin{proof}
Since $\mathfrak{A} / \mathfrak{I}$ is a purely infinite simple $C^{*}$-algebra, we have that $\mathfrak{A} / \mathfrak{I}$ has a norm-full properly infinite projection.  Suppose $\mathfrak{e}$ is a full extension.  Then by Proposition~\ref{p:sepid}, $\mathfrak{I}$ is the largest proper ideal of $\mathfrak{A}$.  Thus, the only ideal of $\mathfrak{A}$ that properly contains $\mathfrak{I}$ is $\mathfrak{A}$.   The ``only if'' direction now follows from Proposition~\ref{p:fullkthy}.  

We now prove the ``if'' direction.  First note that since $\mathfrak{A} / \mathfrak{I}$ is a purely infinite simple $C^{*}$-algebra, $\mathfrak{A} / \mathfrak{I}$ has stable weak cancellation.  Since $\mathfrak{A}$ has real rank zero, by Lemma~\ref{l:swkc}, $\mathfrak{A}$ has stable weak cancellation. 

Now observe that since $\mathfrak{e}$ is stenotic and since $\mathfrak{A} / \mathfrak{I}$ is simple, $\mathfrak{I}$ is the largest non-trivial ideal of $\mathfrak{A}$.  To use Theorem~\ref{t:full} to show that $\mathfrak{e}$ is full, we only need to show that $\mathfrak{e}$ is $K$-lexicographic.  Since $\mathfrak{A} / \mathfrak{I}$ is purely infinite simple $C^{*}$-algebra, by Proposition~\ref{p:cuntz}, $K_{0} ( \mathfrak{A} / \mathfrak{I} ) = K_{0} ( \mathfrak{A} / \mathfrak{I} )_{+} = K_{0} ( \mathfrak{A} / \mathfrak{I} ) _{++}$.  So, to show that $\mathfrak{e}$ is $K$-lexicographic, we must show that $K_{0} ( \mathfrak{A} ) = K_{0} ( \mathfrak{A} )_{+} = K_{0} ( \mathfrak{A} )_{++}$.  

Let $q$ be a non-zero projection in $\mathfrak{A} / \mathfrak{I}$.  Since $\mathfrak{A} / \mathfrak{I}$ is a simple $C^{*}$-algebra, $q$ is a norm-full projection in $\mathfrak{A} / \mathfrak{I}$.  Since $\mathfrak{A}$ has real rank zero, there exists a projection $p$ in $\mathfrak{A}$ such that $\pi_{ \mathfrak{I} , \mathfrak{A} } ( p ) = q$.  Since $\mathfrak{e}$ is stenotic and $\pi_{ \mathfrak{I} , \mathfrak{A} } (p) = q$ is norm-full in $\mathfrak{A} / \mathfrak{I}$, $p$ is norm-full in $\mathfrak{A}$.  Since $K_{0} ( \mathfrak{A} )_{+} = K_{0} ( \mathfrak{A} )$, by Proposition~\ref{p:properlyinf}, $\mathfrak{A}$ has a norm-full properly infinite projection.  Thus, by Proposition~\ref{p:cuntz}, $K_{0} ( \mathfrak{A} ) = K_{0} ( \mathfrak{A} )_{+} = K_{0} ( \mathfrak{A} )_{++}$.     
\end{proof}

Note that if the ideal lattice of $\mathfrak{A}$ is linear, then for every ideal $\mathfrak{I}$ of $\mathfrak{A}$, the extension
\begin{align*}
0 \to \mathfrak{I} \to \mathfrak{A} \to \mathfrak{A} / \mathfrak{I} \to 0
\end{align*}
is stenotic.

\begin{corol}\label{lincase}
Let $\mathfrak{A}$ be a separable $C^{*}$-algebra with real rank zero and a linear ideal lattice:
\begin{equation*}
0 = \mathfrak{I}_{0} \unlhd \mathfrak{I}_{1} \unlhd \mathfrak{I}_{2} \unlhd \cdots \unlhd \mathfrak{I}_{n-1} \unlhd \mathfrak{I}_{n} = \mathfrak{A}. 
\end{equation*}
Consider the extension 
\begin{equation*}
\mathfrak{e} : 0 \to \mathfrak{I}_{k}  \to \mathfrak{A}  \to \mathfrak{A} / \mathfrak{I}_{k}  \to 0
\end{equation*}
for $1 \leq k < n$.  Suppose $\mathfrak{I}_{k}$ is stable and either $\mathfrak{I}_{\ell+1} / \mathfrak{I}_{\ell}$ is an AF algebra or $\mathfrak{I}_{\ell+1} / \mathfrak{I}_{\ell}$ is purely infinite for each $0 \leq \ell < n$.  Then $\mathfrak{e}$ is a full extension if and only if 
\begin{align*}
0 \to \mathfrak{I}_{k} \to \mathfrak{I}_{k+1} \to \mathfrak{I}_{k+1} / \mathfrak{I}_{k} \to 0
\end{align*}
is $K$-lexicographic.
\end{corol}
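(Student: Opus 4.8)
The plan is to reduce the assertion to the main theorem applied to the single ideal $\mathfrak{I}_{k+1}$, exploiting that linearity of the ideal lattice makes $\mathfrak{I}_{k+1}/\mathfrak{I}_k$ the minimal nonzero ideal of $\mathfrak{A}/\mathfrak{I}_k$. First I would record two consequences of linearity. By the remark preceding the corollary, $\mathfrak{e}$ is automatically stenotic. Moreover, since a closed ideal of a closed ideal of a $C^{*}$-algebra is again an ideal (seen by approximating with an approximate identity of the middle ideal), the ideals of $\mathfrak{A}/\mathfrak{I}_k$ are exactly $\mathfrak{I}_{k+1}/\mathfrak{I}_k \subsetneq \cdots \subsetneq \mathfrak{A}/\mathfrak{I}_k$, so $\mathfrak{I}_{k+1}/\mathfrak{I}_k$ is the minimal nonzero ideal and is therefore \emph{simple}; the hypothesis thus reads ``simple AF or purely infinite simple.'' In particular the only ideal $\mathfrak{D}$ of $\mathfrak{I}_{k+1}$ with $\mathfrak{I}_k \subsetneq \mathfrak{D}$ is $\mathfrak{I}_{k+1}$ itself.

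The key step is to show that $\mathfrak{e}$ is full if and only if the sub-extension $\mathfrak{e}' : 0 \to \mathfrak{I}_k \to \mathfrak{I}_{k+1} \to \mathfrak{I}_{k+1}/\mathfrak{I}_k \to 0$ is full. The forward implication is immediate, because $\tau_{\mathfrak{e}'}$ is the restriction of $\tau_{\mathfrak{e}}$ to the ideal $\mathfrak{I}_{k+1}/\mathfrak{I}_k$ of $\mathfrak{A}/\mathfrak{I}_k$. For the converse I would use minimality: given a nonzero $a \in \mathfrak{A}/\mathfrak{I}_k$, the ideal it generates contains $\mathfrak{I}_{k+1}/\mathfrak{I}_k$, so any fixed nonzero $b$ in the latter lies in the ideal generated by $a$; applying the $*$-homomorphism $\tau_{\mathfrak{e}}$ then places $\tau_{\mathfrak{e}}(b) = \tau_{\mathfrak{e}'}(b)$ in the ideal of $\corona{\mathfrak{I}_k}$ generated by $\tau_{\mathfrak{e}}(a)$, and since $\tau_{\mathfrak{e}'}(b)$ is norm-full by fullness of $\mathfrak{e}'$, so is $\tau_{\mathfrak{e}}(a)$. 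This transfer of norm-fullness from $b$ back to $a$ is exactly where linearity is essential.

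It then remains to apply Theorem~\ref{t:full} to $\mathfrak{e}'$. Here $\mathfrak{I}_{k+1}$ has real rank zero as an ideal of $\mathfrak{A}$, $\mathfrak{I}_k$ is stable, and—taking the ambient hypotheses in force—$\mathfrak{I}_k$ carries the corona factorization property and stable weak cancellation, whence $\mathfrak{I}_{k+1}$ has stable weak cancellation by Lemma~\ref{l:swkc} together with the fact that the simple algebra $\mathfrak{I}_{k+1}/\mathfrak{I}_k$ has it (AF algebras have cancellation; purely infinite simple algebras have stable weak cancellation). Because $\mathfrak{I}_{k+1}/\mathfrak{I}_k$ is simple, the only intermediate ideal is $\mathfrak{I}_{k+1}$, and the dichotomy supplies the remaining hypothesis of Theorem~\ref{t:full}: in the AF case $\mathfrak{I}_{k+1}/\mathfrak{I}_k$ has stable rank one and hence cancellation of projections, while in the purely infinite simple case it has a norm-full properly infinite projection. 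Theorem~\ref{t:full} then gives that $\mathfrak{e}'$ is full precisely when $0 \to \mathfrak{I}_k \to \mathfrak{I}_{k+1} \to \mathfrak{I}_{k+1}/\mathfrak{I}_k \to 0$ is $K$-lexicographic, which combined with the previous paragraph yields the corollary. (In the purely infinite case one may instead invoke Corollary~\ref{c:fullpurelyquot} directly.)

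The main obstacle I anticipate is the reduction of the second paragraph: correctly identifying $\tau_{\mathfrak{e}'}$ as the restriction of $\tau_{\mathfrak{e}}$ and verifying that minimality of $\mathfrak{I}_{k+1}/\mathfrak{I}_k$ propagates norm-fullness. A secondary point needing care is confirming that the corona factorization property and stable weak cancellation—needed to invoke Theorem~\ref{t:full} but not restated in the corollary—are indeed available for $\mathfrak{I}_k$ in the setting intended, where they hold automatically (as for graph $C^{*}$-algebras).
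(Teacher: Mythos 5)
Your proposal is correct, and its overall skeleton coincides with the paper's: reduce to the sub-extension $\mathfrak{e}' : 0 \to \mathfrak{I}_k \to \mathfrak{I}_{k+1} \to \mathfrak{I}_{k+1}/\mathfrak{I}_k \to 0$, then apply Theorem~\ref{t:full} (or, in the purely infinite case, Corollary~\ref{c:fullpurelyquot}), using Lemma~\ref{l:swkc} to get stable weak cancellation where needed. The one genuine difference lies in how the equivalence ``$\mathfrak{e}$ full $\iff$ $\mathfrak{e}'$ full'' is obtained: the paper simply cites the results of \cite{segrer:ccfis}, whereas you prove it directly --- the forward direction by observing that $\tau_{\mathfrak{e}'}$ is the restriction of $\tau_{\mathfrak{e}}$ to $\mathfrak{I}_{k+1}/\mathfrak{I}_k$, and the converse by using that linearity makes $\mathfrak{I}_{k+1}/\mathfrak{I}_k$ the minimal nonzero ideal of $\mathfrak{A}/\mathfrak{I}_k$, so that the ideal generated by any nonzero $a$ contains a nonzero $b \in \mathfrak{I}_{k+1}/\mathfrak{I}_k$, whose norm-full image under $\tau_{\mathfrak{e}}$ forces $\tau_{\mathfrak{e}}(a)$ to be norm-full. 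This argument is correct and makes your proof self-contained at the point where the paper's is not; what the citation buys the paper is brevity, and the machinery of \cite{segrer:ccfis} applies beyond the situation at hand. Your remaining verifications --- that ideals of ideals are ideals, so each $\mathfrak{I}_{\ell+1}/\mathfrak{I}_{\ell}$ is simple and the only ideal of $\mathfrak{I}_{k+1}$ properly containing $\mathfrak{I}_k$ is $\mathfrak{I}_{k+1}$ itself; that AF gives cancellation of projections while purely infinite simple gives a norm-full properly infinite projection --- are exactly what is needed to feed Theorem~\ref{t:full}. Finally, you are right that the corona factorization property and stable weak cancellation for $\mathfrak{I}_k$ are nowhere restated in the corollary; the paper's own proof has the same reliance on these ambient hypotheses (it invokes Lemma~\ref{l:swkc} to propagate stable weak cancellation to all $\mathfrak{I}_{\ell}$, and passes over the corona factorization property in silence), so treating them as assumptions in force, as you do, is the intended reading.
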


\begin{proof}
Using Lemma~\ref{l:swkc}, one can show that $\mathfrak{I}_{\ell}$ has stable weak cancellation for all $\ell$.   Also, by the results in \cite{segrer:ccfis}, $\mathfrak{e}$ is a full extension if and only if
\begin{equation*}
0 \to \mathfrak{I}_{k} \to \mathfrak{I}_{k+1} \to \mathfrak{I}_{k+1} / \mathfrak{I}_{k}  \to 0
\end{equation*}
is a full extension.  The corollary now follows from Theorem~\ref{t:full} and Corollary~\ref{c:fullpurelyquot}.
\end{proof}

The authors in \cite{ERRshift} proved that if $\mathfrak{I}$ is a stable ideal of $\mathfrak{A}$ and 
\begin{align*}
\mathfrak{e} : 0 \to \mathfrak{I} \to \mathfrak{A} \to \mathfrak{A} / \mathfrak{I} \to 0
\end{align*}
is a full extension, then 
\begin{equation*}
\mathfrak{e}^{s} : 0 \to \mathfrak{I} \otimes \K \to \mathfrak{A} \otimes \K  \to ( \mathfrak{A} / \mathfrak{I} ) \otimes \K  \to 0
\end{equation*}
is a full extension.  Using our concrete $K$-theoretical test for fullness, we provide a partial converse.  This will be used in the next section to give a complete classification of non-unital graph $C^{*}$-algebras with exactly one non-trivial ideal. 

\begin{corol}\label{c:fullstable}
Let $\mathfrak{A}$ be a separable $C^{*}$-algebra with real rank zero.  Let $\mathfrak{I}$ be an ideal of $\mathfrak{A}$ such that $\mathfrak{I}$ satisfies the corona factorization property and $\mathfrak{I}$ is stable.  Suppose $\mathfrak{A}$ has stable weak cancellation and for every ideal $\mathfrak{D}$ of $\mathfrak{A}$ with $\mathfrak{I} \subsetneq \mathfrak{D}$, $\mathfrak{D} / \mathfrak{I}$ has cancellation of projections or $\mathfrak{D} / \mathfrak{I}$ has a norm-full properly infinite projection.  Then
\begin{equation*}
\mathfrak{e} : 0 \to \mathfrak{I} \to \mathfrak{A} \to \mathfrak{A} / \mathfrak{I}  \to 0
\end{equation*}
is a full extension if and only if
\begin{equation*}
\mathfrak{e}^{s} : 0 \to \mathfrak{I} \otimes \K \to \mathfrak{A} \otimes \K  \to ( \mathfrak{A} / \mathfrak{I} ) \otimes \K  \to 0
\end{equation*}
is a full extension.
\end{corol}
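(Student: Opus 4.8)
The plan is to derive the corollary from Theorem~\ref{t:full} by running it for both $\mathfrak{e}$ and its stabilization $\mathfrak{e}^s$, exploiting that both the hypotheses of that theorem and its characterizing conditions for fullness (being stenotic together with $K$-lexicographicity of every intermediate extension) are invariant under $-\otimes\K$. The forward implication is already Proposition~1.6 of \cite{ERRshift}, so the real content is the converse: assuming $\mathfrak{e}^s$ full, I would apply the ``only if'' part of Theorem~\ref{t:full} to $\mathfrak{e}^s$, transfer the resulting conditions down to $\mathfrak{e}$, and then apply the ``if'' part of Theorem~\ref{t:full} to $\mathfrak{e}$. First I would check that $\mathfrak{e}^s$ satisfies the hypotheses: real rank zero, stability of the ideal, and the corona factorization property are preserved by $-\otimes\K$ (using $\K\otimes\K\cong\K$), and stable weak cancellation of $\mathfrak{A}\otimes\K$ follows from the recorded equivalence that $\mathfrak{A}$ has stable weak cancellation iff $\mathfrak{A}\otimes\K$ has weak cancellation. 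The ideal lattice of $\mathfrak{A}\otimes\K$ is $\{\mathfrak{D}\otimes\K : \mathfrak{D}\lhd\mathfrak{A}\}$ with $(\mathfrak{D}\otimes\K)/(\mathfrak{I}\otimes\K)\cong(\mathfrak{D}/\mathfrak{I})\otimes\K$; since cancellation of projections is a property of the monoid $V(\cdot)$ and hence stable-invariant, and since a norm-full properly infinite projection $q$ yields the norm-full properly infinite projection $q\otimes e_{11}$, the assumed dichotomy on each $\mathfrak{D}/\mathfrak{I}$ passes to each $(\mathfrak{D}/\mathfrak{I})\otimes\K$.

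Next I would transfer the characterizing conditions. Stenoticity of $\mathfrak{e}$ and of $\mathfrak{e}^s$ are equivalent by the observation recorded just after the definition of stenotic. For $K$-lexicographicity, the intermediate extensions of $\mathfrak{e}^s$ are exactly the stabilizations $0\to\mathfrak{I}\otimes\K\to\mathfrak{D}\otimes\K\to(\mathfrak{D}/\mathfrak{I})\otimes\K\to0$ of the intermediate extensions of $\mathfrak{e}$. The natural isomorphism $K_0(\mathfrak{B})\cong K_0(\mathfrak{B}\otimes\K)$ carries $K_0(\mathfrak{B})_+$ onto $K_0(\mathfrak{B}\otimes\K)_+$ and intertwines the maps induced by inclusion and quotient, so the triggering condition $K_0(\mathfrak{D}/\mathfrak{I})\neq K_0(\mathfrak{D}/\mathfrak{I})_+$ and the lexicographic identity of part~(1) of the definition transfer verbatim. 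By the assumed dichotomy, part~(1) is active exactly when $\mathfrak{D}/\mathfrak{I}$ has cancellation of projections and part~(2) exactly when $\mathfrak{D}/\mathfrak{I}$ has a norm-full properly infinite projection; since both cases are preserved by stabilization, it remains only to match part~(2).

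The crux is part~(2), because $K_0(\cdot)_{++}$ is defined through norm-full projections of the algebra itself rather than of matrix amplifications, and so is not manifestly stable-invariant. Here the plan is to recast it via Proposition~\ref{p:properlyinf}: for a real-rank-zero algebra with stable weak cancellation that possesses a norm-full projection, the three conditions $K_0(\mathfrak{B})=K_0(\mathfrak{B})_+=K_0(\mathfrak{B})_{++}$, \ $K_0(\mathfrak{B})=K_0(\mathfrak{B})_+$, and ``$\mathfrak{B}$ has a norm-full properly infinite projection'' are equivalent (Proposition~\ref{p:properlyinf} gives the equivalence with the last, and Proposition~\ref{p:cuntz} upgrades $K_0=K_0{}_+$ to equality with $K_0{}_{++}$); the relevant algebras $\mathfrak{D}$ and $\mathfrak{D}\otimes\K$ inherit stable weak cancellation from Lemma~\ref{l:swkc} and stabilization. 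Since $K_0(\mathfrak{D})=K_0(\mathfrak{D})_+$ is itself stable-invariant, the only ingredient that does not transfer for free is the existence of a norm-full projection in the \emph{unstabilized} $\mathfrak{D}$. To supply one when $\mathfrak{e}^s$ is full, I would use that $\mathfrak{e}$ is then stenotic, lift the norm-full (properly infinite) projection of $\mathfrak{D}/\mathfrak{I}$ to a projection $p\in\mathfrak{D}$ using real rank zero and \cite{realrank}, and invoke stenoticity via Lemma~\ref{l:sepid} to see that $p$ is norm-full in $\mathfrak{D}$. This asymmetric step is where I expect the main difficulty to lie; the opposite matching (part~(2) for $\mathfrak{e}$ implies it for $\mathfrak{e}^s$) needs only the trivial forward transfer $q\mapsto q\otimes e_{11}$.

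Assembling these observations, each intermediate extension of $\mathfrak{e}$ is $K$-lexicographic whenever the corresponding intermediate extension of $\mathfrak{e}^s$ is, and $\mathfrak{e}$ is stenotic whenever $\mathfrak{e}^s$ is. Applying the ``if'' direction of Theorem~\ref{t:full} to $\mathfrak{e}$ then yields that $\mathfrak{e}$ is full, which together with the implication of \cite{ERRshift} completes the equivalence.
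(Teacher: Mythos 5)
Your proposal is correct and follows essentially the same route as the paper: both reduce the corollary to Theorem~\ref{t:full} by observing that stenoticity, the cancellation/properly-infinite dichotomy on the quotients $\mathfrak{D}/\mathfrak{I}$, and $K$-lexicographicity of the intermediate extensions are all invariant under stabilization. The only difference is one of detail rather than of method: the paper simply asserts the equivalence of $K$-lexicographicity for an intermediate extension and its stabilization, whereas you explicitly verify the nontrivial half of this, namely part~(2) of Definition~\ref{d:klexi} involving $K_0(\cdot)_{++}$, via Proposition~\ref{p:properlyinf}, projection lifting by real rank zero, and stenoticity via Lemma~\ref{l:sepid} --- a gap-filling elaboration consistent with arguments already used in the proofs of Proposition~\ref{p:fullkthy} and Theorem~\ref{t:full}.
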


\begin{proof}
We have seen that $\mathfrak{e}$ is stenotic if and only if $\mathfrak{e}^{s}$ is stenotic.  Since every ideal of $\mathfrak{A} \otimes \K$ that properly contains $\mathfrak{I} \otimes \K$ is of the form $\mathfrak{D} \otimes \K$ where $\mathfrak{D}$ is an ideal of $\mathfrak{A}$ with $\mathfrak{I} \subsetneq \mathfrak{D}$, for every ideal $\mathfrak{D}_{1}$ of $\mathfrak{A} \otimes \K$ with $\mathfrak{I} \otimes \K \subsetneq \mathfrak{D}_{1}$, $\mathfrak{D}_{1} / (\mathfrak{I}  \otimes \K ) \cong ( \mathfrak{D} / \mathfrak{I} ) \otimes \K$ for some ideal $\mathfrak{D}$ of $\mathfrak{A}$ with $\mathfrak{I} \subsetneq \mathfrak{D}$.  Hence, for every ideal $\mathfrak{D}_{1}$ of $\mathfrak{A} \otimes \K$ with $\mathfrak{I} \otimes \K \subsetneq \mathfrak{D}_{1}$, $\mathfrak{D}_{1} / (\mathfrak{I}  \otimes \K )$ has cancellation of projections or has a norm-full properly infinite projection.  

Note that if $\mathfrak{D}$ is an ideal of $\mathfrak{A}$ such that $\mathfrak{I} \subsetneq \mathfrak{D}$, then
\begin{equation*}
0 \to \mathfrak{I} \to \mathfrak{D} \to \mathfrak{D} / \mathfrak{I} \to 0 
\end{equation*}
is $K$-lexicographic if and only if 
\begin{equation*}
0 \to \mathfrak{I} \otimes \K \to \mathfrak{D} \otimes \K \to (\mathfrak{D} / \mathfrak{I}) \otimes \K \to 0 
\end{equation*}
is $K$-lexicographic.  

From the above observations, the result now follows from Theorem~\ref{t:full}.
\end{proof}

\begin{remar}
The assumption that $\mathfrak{I}$ is stable in the above corollary is necessary, as we will see in Example~\ref{examplegraph}.
\end{remar}

\section{Applications}

Let $\mathfrak{A}$ be a $C^{*}$-algebra and $\mathfrak{I}$ be an ideal of $\mathfrak{A}$.  Let $\ksix ( \mathfrak{A} ; \mathfrak{I} )$ denote the six-term exact sequence in $K$-theory
\begin{align*}
\xymatrix{K_{0} ( \mathfrak{I} ) \ar[r] & K_{0} ( \mathfrak{A} ) \ar[r] & K_{0} ( \mathfrak{A} / \mathfrak{I} ) \ar[d] \\
K_{1} ( \mathfrak{A} / \mathfrak{I} ) \ar[u] & K_{1} ( \mathfrak{A} ) \ar[l] & K_{1} ( \mathfrak{I} ) \ar[l] 
}
\end{align*}
induced by the extension
\begin{align*}
0 \to \mathfrak{I} \to \mathfrak{A} \to \mathfrak{A} / \mathfrak{I} \to 0.
\end{align*}

For a $C^{*}$-algebra $\mathfrak{B}$ with ideal $\mathfrak{D}$, a homomorphism $\ftn{ ( \beta_{*}, \eta_{*} , \alpha_{*} ) }{ \ksix ( \mathfrak{A} ; \mathfrak{I} ) }{ \ksix ( \mathfrak{B} ; \mathfrak{D} ) }$ is six group homomorphisms $\ftn{ \beta_{i} }{ K_{i} ( \mathfrak{I} ) }{K_{i} ( \mathfrak{D} ) }$, $\ftn{ \eta_{i} }{ K_{i} ( \mathfrak{A} ) }{ K_{i} ( \mathfrak{B} ) }$, and $\ftn{ \beta_{i} }{ K_{i} ( \mathfrak{A} / \mathfrak{I} ) }{ K_{i} ( \mathfrak{B} / \mathfrak{D} ) }$ for $i = 0, 1$, making the obvious diagrams commute.  An isomorphism $\ftn{ ( \beta_{*}, \eta_{*} , \alpha_{*} ) }{ \ksix ( \mathfrak{A} ; \mathfrak{I} ) }{ \ksix ( \mathfrak{B} ; \mathfrak{D} ) }$ is defined in the obvious way.

\subsection{A class of examples}

The following class of examples quite efficiently illustrates a number of key points in this section. Consider the class of graphs $G[m,(n_i)]$ defined by the adjacency matrix
\[
\begin{bmatrix}
m&n_1&n_2&n_3&n_4&\dots\\
 &   &    2&    &&\\
& &   &    2&    &&\\
&&&&\ddots&&
\end{bmatrix}
\]
or graphically as
\[
\xymatrix{
{\vdots}&&\\
{\bullet}\ar@<-1mm>[u]\ar@<+1mm>[u]&&\\
{\bullet}\ar@<-1mm>[u]\ar@<+1mm>[u]&&\\
{\bullet}\ar@<-1mm>[u]\ar@<+1mm>[u]&&\\
{\bullet}\ar@<-1mm>[u]\ar@<+1mm>[u]&&{\bullet}\ar@(rd,ru)[]_-m
\ar[ll]_-{n_1}\ar[ull]_(0.6){n_2}\ar@/_/[uull]_(0.7){n_3}\ar@/_/[uuull]_(0.8){n_4}}
\]

We want to study $C^*(G[m,(n_i)])$ under the added assumption that this $C^*$-algebra has precisely one ideal $\mathfrak I$. Then at least one $n_i$ must be nonzero, $m\not=1$ must hold to ensure condition (K), and when $\infty> m>1$ we must have that $\sum_{i=1}^\infty n_i<\infty$.

We have that the $K_1$-groups of $\mathfrak I, C^*(G[m,(n_i)])$ and $C^*(G[m,(n_i)])/\mathfrak I$ vanish, and
by standard methods (cf.\ \cite{tmcsemt:imkga}), we may compute
\[
\xymatrix{
{0}\ar[r]&K_0(\mathfrak I)\ar[r]&K_0(C^*(G[m,(n_i)]))\ar[r]&K_0(C^*(G[m,(n_i)])/\mathfrak I)\ar[r]&0.}
\]

When $m=0$ or $m=\infty$, we get that the sequence splits and takes the form
\[
\xymatrix{
0\ar[r]&{\ZZ\left[\frac 12\right]}\ar[r]&{\ZZ\left[\frac 12\right]\oplus \ZZ}\ar[r]&\ZZ\ar[r]&0}.
\]
In the case $m=0$, the middle group is ordered by 
\[
\bigcup_{ n = 0}^{ \infty} \left[  \left( (-n \alpha , \infty ) \cap \ZZ  \left[ \frac{1}{2} \right] \right) \times \{n\} \right]
\]
where $\alpha = \sum_{i=1}^\infty n_i2^{-i}$, and the other two groups are ordered canonically as subsets of $\RR$. When $m=\infty$ the middle group is trivially ordered; every element is positive. 

In the remaining cases $\infty>m>1$, again, the middle group is trivially ordered and the sequence then takes the form
\[
\xymatrix{
0\ar[r]&{\ZZ\left[\frac 12\right]}\ar[r]&{\ZZ\left[\frac 12\right]\oplus \ZZ/x\ZZ}\ar[r]^-{\pi}&\ZZ/(m-1)\ZZ\ar[r]&0}.
\]
Here, one may prove that $x$ is an integer depending on the number
\[
N=\sum_{i=1}^k n_i2^{k-i}
\]
where $k$ is chosen such that $n_i=0$ for $i>k$,
and ranges among all numbers of the form
\[
2^\ell p_1^{i_1}\cdots p_r^{i_r}, 0\leq i_j\leq k_j
\]
where $m-1=2^\ell p_1^{k_1}\cdots p_r^{k_r}$.

The reader is asked to note that in the first (AF) case, the extension is always the same but may be ordered in uncountably many ways whereas in the second (mixed) case, the extensions may vary but are always ordered similarly. We will elaborate on this in the next section.

We end this section by showing that the assumption that $\mathfrak{I}$ is a stable $C^{*}$-algebra in Corollary~\ref{c:fullstable} is necessary.

\begin{examp}\label{examplegraph}
Let $\{ n_{i} \}_{ i = 1}^{ \infty }$ be a sequence such that $\alpha = \sum_{ i = 1}^{ \infty } n_{i} 2^{-i} = 1$.  Set $\mathfrak{A} = C^*(G[0,(n_i)])$.  Then $\mathfrak{A}$ is an AF algebra with exactly one ideal $\mathfrak{I}$ such that $\mathfrak{I} \otimes \K \cong \mathsf{M}_{2^{\infty}} \otimes \K$, and 
\begin{align*}
K_{0} ( \mathfrak{A} ) &= \Z\left[ \frac{1}{2} \right] \oplus \Z \\
K_{0} ( \mathfrak{A} )_{+} &= \bigcup_{ n = 0}^{ \infty} \left[  \left( (-n  , \infty ) \cap \ZZ  \left[ \frac{1}{2} \right] \right) \times \{n\} \right].
\end{align*}
Thus, 
\begin{align*}
K_{0} ( \mathfrak{A} \otimes \K ) &= \Z\left[ \frac{1}{2} \right] \oplus \Z \\
K_{0} ( \mathfrak{A} \otimes \K )_{+} &= \bigcup_{ n = 0}^{ \infty} \left[  \left( (-n  , \infty ) \cap \ZZ  \left[ \frac{1}{2} \right] \right) \times \{n\} \right].
\end{align*}
Hence, by Corollary~\ref{cor:str1}, 
\begin{align*}
0 \to \mathfrak{I} \otimes \K \to \mathfrak{A} \otimes \K \to ( \mathfrak{A} / \mathfrak{I} ) \otimes \K \to 0 
\end{align*}
is not a full extension.

A computation shows that $\mathfrak{I}$ is a non-unital, non-stable $C^{*}$-algebra.  Therefore, $\mathfrak{I} = \overline{ a ( \mathsf{M}_{ 2^{\infty }  }\otimes \K ) a }$ is a $C^{*}$-algebra with continuous scale.  By Theorem~2.8 of \cite{contscale}, $\corona{ \mathfrak{I} }$ is a simple $C^{*}$-algebra.  Since 
\begin{align*}
\mathfrak{e} : 0 \to \mathfrak{I} \to \mathfrak{A} \to \mathfrak{A} / \mathfrak{I} \to 0
\end{align*}
is an essential extension and since $\corona{ \mathfrak{I} }$ is a simple $C^{*}$-algebra, we have that $\mathfrak{e}$ is a full extension.  By the above paragraph, $\mathfrak{e}^{s}$ is not a full extension.
\end{examp}

\subsection{Classification up to stable isomorphism}

In our first classification result, Theorem~3.9 of \cite{ERRshift} for full extensions of classifiable simple $C^*$-algebras the chosen invariant was the six-term exact sequence $\ksix(-;-)$ where the $K_0$-groups of the ideal and quotient were to be considered as ordered groups. 
Thus, although also the $K_0$-group of the middle $C^*$-algebra carries a natural ordering, it turns out to be a disposable part of the invariant. The same phenomenon occurs in all the ensuing classification results in \cite{semt_classgraphalg} and \cite{segrer:ccfis} which are --- directly or indirectly --- based on the same approach.

The results above explain the phenomenon in many cases, by demonstrating that in a full extension, the order of the $K_0$-group in the middle is ordered in a way which is determined by the order of the other two. The phenomenon manifests  most clearly for the graph $C^*$-algebras $C^*(E)$ with one ideal classified up to stable isomorphism in \cite{semt_classgraphalg}. Using notation introduced there, these fall in four classes according to whether the (simple) ideal and quotient are purely infinite or AF: 
\begin{center}
\begin{tabular}{|c|c|c|}\hline Case&$\mathfrak I$&$\mathfrak A/\mathfrak I$\\ \hline $\fifi$&AF&AF\\
$\fiin$&AF&PI\\ $\infi$&PI&AF\\
$\inin$&PI&PI\\ \hline
\end{tabular}
\end{center} 
and in all cases except $\fifi$ it is proved in \cite{semt_classgraphalg} that the extensions are automatically full. Hence the order of $K_0(C^*(E))$ is determined by our results above. We note

\begin{propo}\label{grint}
Let $C^*(E)$ be a graph $C^*$-algebra with precisely one ideal $\mathfrak I$.
\begin{enumerate}[(i)]
\item In cases $\fiin$ and $\inin$, we have
\[
K_0(C^*(E))_+=K_0(C^*(E)),
\]
\item in case $\infi$, we have
\[
K_0(C^*(E))_+=K_{0} ( \iota ) (K_0(\mathfrak I)_+)\sqcup \{x\in K_0(C^*(E))\mid K_{0}(\pi)(x)>0\}
\]
\end{enumerate}
and hence in these cases, the order on $K_0(C^*(E))$ is determined by the order on $K_0(\mathfrak I)$ and $K_0(C^*(E)/\mathfrak I)$. But 
in case $\fifi$, $K_0(C^*(E))_+$  is not determined in this way. In fact,
\begin{enumerate}[(i)]\addtocounter{enumi}{2}
\item  there exists a family of graphs $E_\lambda$ such that all invariants $\ksix(C^*(E_\lambda);\mathfrak I_\lambda)$ 
are the same except for the ordering on $K_0(C^*(E_\lambda))$, all of which  are mutually non-isomorphic.
\end{enumerate}
\end{propo}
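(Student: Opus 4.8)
The plan is to obtain parts (i) and (ii) directly from Proposition~\ref{p:fullkthy} by exploiting that $\mathfrak I$ is the \emph{unique} nontrivial ideal, and to settle (iii) by an explicit computation on the family $G[0,(n_i)]$ from the previous subsection.

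\textbf{Parts (i) and (ii).} Since $C^*(E)$ has exactly one ideal it satisfies condition~(K), hence has real rank zero. The order and the six-term sequence are stable invariants, so I would first pass to the stabilization, where $\mathfrak I\otimes\K$ is stable; fullness of $\mathfrak e$ persists after stabilization by Proposition~1.6 of \cite{ERRshift}, and in each of the cases $\fiin$, $\infi$, $\inin$ the extension is full by \cite{semt_classgraphalg}. The ideal $\mathfrak I$ is a simple AF or simple purely infinite algebra, so it satisfies the corona factorization property, and because $\mathfrak I$ is the only nontrivial ideal the sole $\mathfrak D$ with $\mathfrak I\subsetneq\mathfrak D$ is $\mathfrak A=C^*(E)$ itself. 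Its quotient $\mathfrak A/\mathfrak I$ is simple, hence has cancellation of projections (AF case) or a norm-full properly infinite projection (PI case), so all hypotheses of Proposition~\ref{p:fullkthy} hold and the extension is $K$-lexicographic. It then only remains to read off the two branches of that notion. When the quotient is purely infinite (cases $\fiin$ and $\inin$), Proposition~\ref{p:cuntz} gives $K_0(\mathfrak A/\mathfrak I)=K_0(\mathfrak A/\mathfrak I)_+=K_0(\mathfrak A/\mathfrak I)_{++}$, so clause~(2) of $K$-lexicographicity yields $K_0(C^*(E))_+=K_0(C^*(E))$, which is (i). When the quotient is AF (case $\infi$) it is stably finite, so $K_0(\mathfrak A/\mathfrak I)_+\neq K_0(\mathfrak A/\mathfrak I)$, and clause~(1) gives exactly the lexicographic decomposition asserted in (ii). In both situations the right-hand side is expressed solely through $K_0(\iota)$, $K_0(\pi)$ and the orders on $K_0(\mathfrak I)$ and $K_0(C^*(E)/\mathfrak I)$, which is the claimed determinacy.

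\textbf{Part (iii), construction.} Here I would take $E_\lambda=G[0,(n_i^{(\lambda)})]$, so $C^*(E_\lambda)$ lies in case $\fifi$ with simple AF ideal and quotient $\C$. As computed in the previous subsection, $K_1$ vanishes and the six-term sequence reduces to
\[
0\to\ZZ\left[\tfrac12\right]\to\ZZ\left[\tfrac12\right]\oplus\ZZ\to\ZZ\to0,
\]
with $K_0(\mathfrak I_\lambda)=\ZZ[\tfrac12]$ and $K_0(C^*(E_\lambda)/\mathfrak I_\lambda)=\ZZ$ carrying their canonical orders \emph{independently} of $\lambda$; only the middle order
\[
K_0(C^*(E_\lambda))_+=\bigcup_{n=0}^{\infty}\left[\left((-n\alpha_\lambda,\infty)\cap\ZZ\left[\tfrac12\right]\right)\times\{n\}\right],\qquad \alpha_\lambda=\sum_{i=1}^{\infty}n_i^{(\lambda)}2^{-i},
\]
depends on $\lambda$ through the real number $\alpha_\lambda$. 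Thus the invariants $\ksix(C^*(E_\lambda);\mathfrak I_\lambda)$ coincide once the middle order is forgotten, and the entire content of (iii) is to exhibit an uncountable family for which the ordered middle groups are mutually non-isomorphic.

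\textbf{Part (iii), rigidity.} To recover $\alpha_\lambda$ I would classify the isomorphisms of the ordered sequences. An order automorphism of $\ZZ[\tfrac12]$, ordered as a subgroup of $\RR$, is multiplication by a positive unit $2^{k}$ with $k\in\ZZ$, while the only order automorphism of $(\ZZ,\leq)$ is the identity; commutativity of the defining diagram then forces any isomorphism to take the form $\eta(x,n)=(2^{k}x+cn,\,n)$ for some $k\in\ZZ$ and $c\in\ZZ[\tfrac12]$. Comparing the infimal positivity thresholds at level $n=1$ shows that $\eta$ carries the order for $\alpha_\lambda$ to the order for $\alpha_{\lambda'}$ exactly when $\alpha_{\lambda'}=2^{k}\alpha_\lambda-c$, that is, when $\alpha_\lambda$ and $\alpha_{\lambda'}$ lie in one orbit of the countable group of maps $\alpha\mapsto 2^{k}\alpha-c$ acting on $\RR$. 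Since every positive real is realizable as some $\sum_i n_i2^{-i}$ (with $m=0$ no finiteness of the $n_i$ is imposed) and each orbit is countable, there are continuum-many orbits; picking one $\alpha_\lambda$ from each of uncountably many distinct orbits yields the desired family. The main obstacle is precisely this last step: one must confirm that every admissible isomorphism of $\ksix(-;-)$ has the displayed form, so that no exotic automorphism identifies two of the orders, and that representatives from distinct orbits can indeed be realized by sequences $(n_i^{(\lambda)})$ keeping each $C^*(E_\lambda)$ in case $\fifi$ with a single ideal.
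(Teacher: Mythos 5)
Your proof follows essentially the paper's own route: parts (i) and (ii) come from feeding the fullness established in \cite{semt_classgraphalg} into the ``fullness implies $K$-lexicographic'' machinery, and part (iii) uses the same family $G[0,(n_i)]$ together with a countability argument. The small differences are neutral or to your advantage: you invoke Proposition~\ref{p:fullkthy} directly (the half of Theorem~\ref{t:full} that is actually needed), which spares you the stable weak cancellation input (\cite{amp:nonstablekthy} and Lemma~\ref{l:swkc}) that the paper cites only in order to quote the biconditional Theorem~\ref{t:full}; and in (iii) you classify the admissible isomorphisms explicitly as $(x,n)\mapsto(2^kx+cn,n)$ and pass to orbits of $\alpha\mapsto 2^k\alpha-c$, where the paper simply observes that $\ZZ[\frac12]\oplus\ZZ$ has only countably many group automorphisms while the cones vary over uncountably many $\alpha$; both countability arguments are correct, yours being the more explicit refinement.

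One step in your write-up does need repair. You pass to the stabilization ``so that the ideal is stable'' and then quote Proposition~1.6 of \cite{ERRshift} to keep fullness; but that proposition has stability of the ideal as a \emph{hypothesis}, so stabilizing cannot be used to manufacture it, and the implication genuinely fails for non-stable ideals --- the paper's Example~\ref{examplegraph} exhibits exactly a full $\mathfrak e$ whose stabilization $\mathfrak e^s$ is not full. The gap is easy to close: in cases $\fiin$, $\infi$ and $\inin$ the algebra $C^*(E)$ is not AF, so its ideal is stable by Proposition~6.4 of \cite{semt_classgraphalg} (cf.\ Lemma~\ref{l:full1ideal}); with that citation added, your use of Proposition~1.6 of \cite{ERRshift} (or, alternatively, of Proposition~3.10 of \cite{semt_classgraphalg}, which yields fullness of the stabilized extension directly) is legitimate, and in fact you could then just as well apply Proposition~\ref{p:fullkthy} to $\mathfrak e$ itself without pre-stabilizing. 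Note that the paper's own proof tacitly relies on the same stability fact, since Theorem~\ref{t:full} also assumes the ideal is stable.
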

\begin{proof}
Since the extensions are seen to be full in the course of the proof of Theorem~4.5 in \cite{semt_classgraphalg}, and since the $C^*$-algebras in question have stable weak cancellation according to \cite{amp:nonstablekthy} and Lemma~\ref{l:swkc}, we may apply Theorem~\ref{t:full} to prove claims (i) and (ii).

For claim (iii), consider the class of graphs $G[0,(n_i)]$ considered above. With $\alpha=\sum_{i=1}^\infty n_i2^{-i}$ the order on $K_0(C^*(G[0,(n_i)]))=\ZZ[\frac12]\oplus\ZZ$ is, as we have seen, given by
\[
\bigcup_{ n = 0}^{ \infty} \left[  \left( (-n \alpha , \infty ) \cap \ZZ  \left[ \frac{1}{2} \right] \right) \times \{n\} \right]
\]
and hence different for different $\alpha$. Since there are only countably many automorphisms of $\ZZ[\frac 12]\oplus \ZZ$, the claim follows.  
\end{proof}

Note that among the graph $C^*$-algebras associated to $G[0,(n_i)]$, the ones for which the corresponding extensions are full are precisely those with $\alpha=\infty$.
In fact it is the rule rather than the exception that there are both full and non-full extensions when the ideal is stably finite. For instance, we note:

\begin{examp}\label{nonfullext}
Let $\mathfrak{B}$ be the stabilization of the UHF-algebra of type $2^{\infty}$.  Then there exist essential extensions
\begin{equation*}
\mathfrak{e}_{i}:  0 \to \mathfrak{B} \to \mathfrak{E}_{i} \to \C \to 0
\end{equation*}
for $i=1,2$ such that $\mathfrak{e}_{1}$ is full but $\mathfrak{e}_{2}$ is not. The real rank of $\mathfrak E_i$ is zero, and the invariants $\ksix(\mathfrak E_i;\mathfrak B)$ are the same except for the ordering of $K_0(\mathfrak E_i)$.
\end{examp}
\begin{proof}
By \cite{hl:imasac}, we can choose a non-full non-zero projection $p\in \multialg{\mathfrak B}$. Let $\gamma_{1}$ be the composition
\begin{align*}
\C \overset{ \mathrm{unital} }{ \hookrightarrow } p \multialg{ \mathfrak{B} } p \hookrightarrow  \multialg{ \mathfrak{B} }.
\end{align*}    
Let $\ftn{ \gamma_{2} }{ \C }{ \multialg{ \mathfrak{B} } }$ be any full homomorphism.  Let $\mathfrak{E}_{1} =  \C \oplus_{ \pi_{ \mathfrak{B}  } \circ \gamma_{1} , \pi_{ \mathfrak{B} } } \multialg{ \mathfrak{B} }$ and $\mathfrak{E}_{2} = \C \oplus_{ \pi_{\mathfrak{B} } \circ \gamma_{2} , \pi_{ \mathfrak{B} } } \multialg{ \mathfrak{B} }$.  
\end{proof}

\begin{examp}\label{nonfullextagain}
Let $\mathfrak{B}_{0}$ be a unital simple $AF$ algebra with finitely many extremal traces such that $\mathfrak{B} = \mathfrak{B}_{0} \otimes \KKK$ is not isomorphic to $\KKK$.  Then there exist an extension
\begin{equation*}
\mathfrak{e}_{i}:  0 \to \mathfrak{B} \to \mathfrak{E}_{i} \to \mathcal{O}_2 \to 0
\end{equation*}
for $i=1,2$ such that $\mathfrak{e}_{1}$ is full but $\mathfrak{e}_{2}$ is not. The real rank of $\mathfrak E_i$ is zero, and the invariants $\ksix(\mathfrak E_i;\mathfrak B)$ are the same except for the ordering of $K_0(\mathfrak E_i)$. 
\end{examp}
\begin{proof}
By Theorem~1.3 of \cite{SZ_proj}, $\corona{\mathfrak{B} }$ is purely infinite.  Let $\ftn{ \tau_{1} }{ \mathcal{O}_{2} }{ \corona{ \mathfrak{B} } }$ be any full homomorphism which always exists by \cite{ekncp:eeccao}.  By \cite{mr_ideal}, $\corona{ \mathfrak{B} }$ has a smallest non-trivial ideal, $\mathfrak{I}$.  Since $\corona{ \mathfrak{B} }$ is purely infinite and $\mathfrak{I}$ is simple, there exists an injective homomorphism $\ftn{ \tau_{2} }{ \mathcal{O}_{2} }{  \mathfrak{I} \subsetneq \corona{\mathfrak{B} } }$.  Let $\mathfrak{e}_{i}$ be the extension associated to the Busby invariant $\tau_{i}$.
\end{proof}


We end this section by showing that even if one is prepared to use the order on $K_{0} ( \mathfrak{C} )$, the invariants are not complete for non-full essential extensions of classifiable simple $C^{*}$-algebras.  We first need the following result.  It is proved in the exact same way as in the proof of Proposition~2.2 (c) of \cite{SZ_proj}.

\begin{lemma}\label{l:equivalentproj}
Let $\mathfrak{B}_{0}$ be a unital, simple, AF-algebra with finitely many extreme tracial states $\{ \tau_{1}, \dots, \tau_{n} \}$.  Set $\mathfrak{B} = \mathfrak{B}_{0} \otimes \K$.  Suppose $p \in \mathfrak{B}$ and $q \in \multialg{ \mathfrak{B} }$ are projections such that $\overline{\tau}_{i} ( p ) < \overline{\tau}_{i} ( q )$, where $\overline{ \tau }_{i}$ is the extension of $\tau_{i}$ to $\multialg{ \mathfrak{B} }$.  Then there exists a projection $e\in \mathfrak{B}_{0} \otimes \K$ such that $p \sim e$ and $e \leq q$.
\end{lemma}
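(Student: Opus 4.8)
The plan is to prove the comparison first inside the stable AF algebra $\mathfrak{B}$ itself, and then reduce the multiplier projection $q$ to an honest projection of $\mathfrak{B}$ sitting below it, to which the comparison applies. The whole argument is the one used for Proposition~2.2(c) of \cite{SZ_proj}.

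\emph{Comparison inside $\mathfrak{B}$.} Since $\mathfrak{B}_{0}$ is a unital simple AF algebra, $(K_{0}(\mathfrak{B}_{0}), K_{0}(\mathfrak{B}_{0})_{+}, [1])$ is a simple dimension group whose state space is the finite-dimensional simplex with extreme points $\tau_{1}, \dots, \tau_{n}$. For a simple dimension group the order is recovered from its states: a nonzero element is strictly positive exactly when it is strictly positive under every extreme state. Passing to $\mathfrak{B} = \mathfrak{B}_{0} \otimes \K$ changes neither the ordered $K_{0}$-group nor the trace pairings, and $\mathfrak{B}$, being AF, has cancellation of projections. Hence, if $p$ and $f$ are projections in $\mathfrak{B}$ with $\tau_{i}(p) < \tau_{i}(f)$ for every $i$, then $[f] - [p] > 0$ in $K_{0}(\mathfrak{B})$, so $p \lesssim f$; writing $v^{*}v = p$ and $vv^{*} \leq f$ with $v \in \mathfrak{B}$ and setting $e = vv^{*}$ gives a projection $e \in \mathfrak{B}$ with $p \sim e \leq f$. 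This already settles the case of the lemma in which $q \in \mathfrak{B}$.

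\emph{Reduction of the multiplier case.} For a general $q \in \multialg{\mathfrak{B}}$, I would pass to the hereditary subalgebra $q\mathfrak{B}q$ of $\mathfrak{B}$. As a hereditary subalgebra of the separable AF algebra $\mathfrak{B}$ it is again AF, so it admits an increasing approximate unit $\{e_{k}\}$ consisting of projections; each $e_{k}$ is then a projection of $\mathfrak{B}$ with $e_{k} \leq q$, and $e_{k} \to q$ strictly in $\multialg{\mathfrak{B}}$. By lower semicontinuity of the extended traces one gets $\overline{\tau}_{i}(q) = \sup_{k} \tau_{i}(e_{k})$ for each $i$. Because there are only finitely many traces and $\overline{\tau}_{i}(q) > \overline{\tau}_{i}(p) = \tau_{i}(p)$ for all $i$, I can fix a single index $k$ with $\tau_{i}(e_{k}) > \tau_{i}(p)$ simultaneously for all $i = 1, \dots, n$. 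Applying the comparison step to the pair $p, e_{k}$ in $\mathfrak{B}$ then produces a projection $e \in \mathfrak{B} = \mathfrak{B}_{0} \otimes \K$ with $p \sim e$ and $e \leq e_{k} \leq q$, as required.

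The main obstacle is the reduction step, specifically the identity $\overline{\tau}_{i}(q) = \sup_{k} \tau_{i}(e_{k})$: it rests on the normality (strict lower semicontinuity) of the canonical extension of a densely defined lower-semicontinuous trace from $\mathfrak{B}$ to $\multialg{\mathfrak{B}}$, together with the ability to choose the cut-off index $k$ uniformly over the trace parameter. This uniform choice is precisely where the hypothesis of \emph{finitely many} extreme traces is used; with infinitely many extreme traces no single $e_{k}$ need dominate $p$ at all of them at once.
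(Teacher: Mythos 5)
Your proposal is correct, and it is essentially the paper's own proof: the paper simply defers to Proposition~2.2(c) of \cite{SZ_proj}, whose argument is exactly what you carry out — inner approximation of $q$ by an increasing sequence of projections $e_k \in \mathfrak{B}$ (using that hereditary subalgebras of AF algebras are AF), strict lower semicontinuity of the extended traces to get $\overline{\tau}_i(q) = \sup_k \tau_i(e_k)$, finiteness of the extreme trace set to pick one $e_k$ dominating $p$ under every $\tau_i$, and then comparison in the simple dimension group $K_0(\mathfrak{B})$ together with cancellation. Your explicit identification of where the finiteness hypothesis enters is also the right diagnosis.
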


\begin{propo}
There exist separable, nuclear, real rank zero $C^{*}$-algebras $\mathfrak{E}_{1}$ and $\mathfrak{E}_{2}$ satisfying the UCT such that $\mathfrak{E}_{i}$ has a unique nontrivial ideal $\mathfrak{B}_{i}$, $\mathfrak{E}_{1}$ and $\mathfrak{E}_{2}$ are not stably isomorphic but there exists an isomorphism $\ftn{ ( \beta_{*}, \eta_{*} , \alpha_{*} ) }{ \ksix ( \mathfrak{E}_{1}; \mathfrak{B}_{1} ) }{ \ksix ( \mathfrak{E}_{2} ; \mathfrak{B}_{2} ) }$ such that $\alpha_{0}$, $\beta_{0}$, and $\eta_{0}$ are positive isomorphisms.
\end{propo}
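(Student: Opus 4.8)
The plan is to realize $\mathfrak{E}_1$ and $\mathfrak{E}_2$ as the total algebras of two \emph{non-full} essential extensions sharing one and the same ordered six-term sequence, exploiting that---by Theorem~\ref{t:full} and its corollaries---ordered $\ksix$ is a complete invariant only in the presence of fullness, so the desired incompleteness must be sought precisely among non-full extensions. Following the template of Example~\ref{nonfullextagain}, I would take $\mathfrak{B}_0$ to be a unital simple AF algebra with finitely many extreme tracial states and set $\mathfrak{B}=\mathfrak{B}_0\otimes\K$; by Theorem~1.3 of \cite{SZ_proj} the corona $\corona{\mathfrak{B}}$ is purely infinite, and by \cite{mr_ideal} it carries a nontrivial ideal lattice indexed by the trace space, in particular a proper nonzero ideal $\mathfrak{I}$. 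Choosing a simple, purely infinite, UCT algebra $\mathfrak{D}$ as quotient together with two injective Busby maps $\tau_1,\tau_2\colon\mathfrak{D}\to\mathfrak{I}\subsetneq\corona{\mathfrak{B}}$, I obtain essential extensions $\mathfrak{e}_i$ that are non-full by construction, with total algebras $\mathfrak{E}_i$ whose unique nontrivial ideal is $\mathfrak{B}_i=\mathfrak{B}$. Separability, nuclearity and the UCT pass from $\mathfrak{B}$ and $\mathfrak{D}$ to $\mathfrak{E}_i$, while real rank zero follows from the results of \cite{realrank}, using that $\mathfrak{B}$ and $\mathfrak{D}$ have real rank zero and that $\corona{\mathfrak{B}}$ is purely infinite.

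The second step is to force the ordered invariants to agree. Since $K_1(\mathfrak{B})=0$, the sequence $\ksix(\mathfrak{E}_i;\mathfrak{B})$ collapses to $0\to K_0(\mathfrak{B})\to K_0(\mathfrak{E}_i)\to K_0(\mathfrak{D})\to 0$ together with the orders, so that $\beta_0$ and $\alpha_0$ may be taken to be the identity. The crux is the positive cone $K_0(\mathfrak{E}_i)_+$: using Lemma~\ref{l:equivalentproj} to compare lifts of projections from $\mathfrak{D}$ with projections of $\mathfrak{B}$ through the extended traces $\overline{\tau}_j$, I would compute the cone explicitly and arrange $\tau_1,\tau_2$ so that the two cones coincide, producing the sought positive isomorphism $(\beta_*,\eta_*,\alpha_*)$ with $\eta_0$ the resulting identification. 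The guiding principle is that the order on $K_0(\mathfrak{E}_i)$ records only the sign pattern of the tracial pairings produced by the extension, so a whole family of extensions---differing in finer, order-invisible data---will share a single ordered $\ksix$.

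The hardest step, and the heart of the argument, is to certify that $\mathfrak{E}_1$ and $\mathfrak{E}_2$ are nevertheless not stably isomorphic. Here I cannot invoke the classification of \cite{ERRshift} or \cite{semt_classgraphalg}, since those require fullness; instead I would separate the algebras by an invariant that ordered $\ksix$ forgets, namely the stable-isomorphism class of the extension itself (equivalently, its refined $KK$-theoretic position relative to the ideal), which for non-full extensions is genuinely finer than the induced ordered six-term sequence. To turn this into a rigorous non-isomorphism I would run a cardinality argument in the spirit of Proposition~\ref{grint}(iii): produce a continuum of extensions $\{\mathfrak{E}_t\}$, all inducing the one fixed ordered $\ksix$, while the finer invariant varies faithfully over an uncountable set; since an isomorphism $\mathfrak{E}_s\cong\mathfrak{E}_t$ would descend to an automorphism of that fixed ordered sequence, and there are only countably many such automorphisms, uncountably many of the $\mathfrak{E}_t$ must be pairwise non-isomorphic, and any two of them serve as $\mathfrak{E}_1,\mathfrak{E}_2$. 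The delicate points I expect to fight are, first, guaranteeing that moving the finer invariant leaves the positive cone \emph{exactly} fixed, so that ordered $\ksix$ is literally unchanged, and second, verifying that the finer invariant really is a stable-isomorphism invariant of $\mathfrak{E}_t$ and not merely an invariant of the extension data---this is exactly the place where the failure of fullness both creates the phenomenon and must be controlled.
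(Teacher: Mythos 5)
Your construction and invariant computation (steps 1--3) follow the same template as the paper's proof --- $\mathfrak{B}=\mathfrak{B}_0\otimes\K$ with $\mathfrak{B}_0$ simple unital AF with finitely many extreme traces, purely infinite corona by Theorem~1.3 of \cite{SZ_proj}, a purely infinite simple quotient mapped by a non-full Busby map into a proper ideal of $\corona{\mathfrak{B}}$, and cones computed through the extended traces --- but you diverge from the paper at the one point where the choice matters: you send \emph{both} Busby maps into the \emph{same} ideal $\mathfrak{I}\subsetneq\corona{\mathfrak{B}}$. This discards the only invariant that can finish the proof. The paper takes $\mathfrak{B}_0$ with exactly two extreme traces and $K_0(\mathfrak{B}_0)\cong\Q\oplus\Q$ with the open-quadrant order, so that by \cite{mr_ideal} the corona of $\mathfrak{B}\otimes\K$ has exactly three proper nonzero ideals, and it sends the two Busby maps norm-fully into two \emph{different} ideals of this finite lattice. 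Then Theorem~2.2 of \cite{ELPmorph} says that a stable isomorphism $\mathfrak{E}_1\otimes\K\cong\mathfrak{E}_2\otimes\K$ produces an automorphism $\overline{\psi}$ of $\corona{\mathfrak{B}\otimes\K}$, induced by an automorphism $\psi$ of $\mathfrak{B}\otimes\K$, carrying the ideal generated by the one Busby image onto the ideal generated by the other; since an induced automorphism can only permute the three-element ideal lattice in an inclusion-preserving way, choosing the two ideals so that no such permutation matches them gives the contradiction. (Note also that with two different ideals the two cones are \emph{different} half-planes $\mathcal{H}_1,\mathcal{H}_2$ inside $\Q\oplus\Q$, and the positive isomorphism of the invariants is the flip $(x,y)\mapsto(y,x)$, not the identity; this is why the specific $K_0(\mathfrak{B}_0)$ is chosen.) With a single ideal $\mathfrak{I}$, and especially with the natural choice $\mathfrak{D}=\mathcal{O}_2$ that makes the six-term sequences collapse, nothing in your argument prevents your two extensions from being conjugate in the sense of \cite{ELPmorph}, and one should in fact expect $\mathfrak{E}_1$ and $\mathfrak{E}_2$ to be stably isomorphic.

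The cardinality argument you propose in place of this cannot be repaired. In Proposition~\ref{grint}(iii) the countable object is $\Aut\bigl(\ZZ[\frac{1}{2}]\oplus\ZZ\bigr)$, the automorphism group of a countable discrete group, acting on the set of possible cones; the varying datum is itself part of the ordered $K$-theory. In your setting the ordered invariant is held literally fixed, and the transformations relating the ``finer invariants'' of isomorphic total algebras are, via \cite{ELPmorph}, pairs in $\Aut(\mathfrak{D}\otimes\K)\times\Aut(\mathfrak{B}\otimes\K)$ --- uncountable groups --- so ``only countably many automorphisms of the fixed ordered sequence'' yields no obstruction at all. Moreover, your finer invariant is either circular or empty: ``the stable-isomorphism class of the extension itself'' is precisely what must be computed, and its faithful variation over an uncountable family is the statement to be proved; while if $\mathfrak{D}=\mathcal{O}_2$, then $KK^1(\mathcal{O}_2,\mathfrak{B})=0$, so every extension in your family has the same trivial $KK$ class and there is no ``refined $KK$-theoretic position'' to vary. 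What replaces it in the paper is coarser and concrete: the position of the Busby image in the finite ideal lattice of $\corona{\mathfrak{B}\otimes\K}$, which is exactly the datum your same-ideal construction erases.
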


\begin{proof}
Let $\mathfrak{A} = \mathcal{O}_{2}$ and let $\mathfrak{B}_{0}$ be a unital simple AF algebra with two extreme tracial states $\{ \tau_{0}, \tau_{1} \}$ such that $K_{0} ( \mathfrak{B}_{0} ) \cong \Q \oplus \Q $ and the isomorphism takes $K_{0} ( \mathfrak{B}_{0} )_{+}$ to
\begin{align*}
G = \setof{ ( x, y ) }{ \text{$x > 0$ and $y > 0$} } \sqcup \{ (0,0) \}. 
\end{align*}  
In fact the isomorphism is given by $g \mapsto ( \tau_{0} ( g), \tau_{1} ( g ) )$.  Set $\mathfrak{B} = \mathfrak{B}_{0} \otimes \K$.

By \cite{mr_ideal}, $\corona{ \mathfrak{B} }$ has exactly three non-trivial proper ideals, namely $\mathfrak I_0=\mathfrak D_0/\mathfrak B$ and $\mathfrak I_1=\mathfrak D_1/\mathfrak B$ with $\mathfrak D_i$ the ideal generated by
\begin{align*}
\setof{ x \in \multialg{ \mathfrak{B} } }{ \text{$\overline{\tau}_{i} ( x^{*} x ) < \infty$} },
\end{align*}
for $i\in\{0,1\}$, and the nonzero minimal ideal $\mathfrak I_2=\mathfrak I_0\cap\mathfrak I_1$.

By Theorem~1.3 of \cite{SZ_proj}, $\mathfrak I_0$, $\mathfrak{I}_{1}$ and $\mathfrak{I}_{2}$ are properly infinite $C^{*}$-algebras.  Hence, for $i\in\{0,1,2\}$, there exists an injective homomorphism $\ftn{ \beta_{i} }{ \mathfrak{A} }{ \mathfrak{I}_{i} } $ such that for each nonzero $a \in \mathfrak{A}$, $\beta_{i}(a)$ is norm-full in $\mathfrak{I}_{i}$.
Set $\mathfrak{E}_{i} = \mathfrak{A} \oplus_{ \beta_{i} , \pi_{ \mathfrak{B} } } \multialg{ \mathfrak{B} }$.  It is clear that $\mathfrak{E}_{i}$ is a separable, nuclear, real rank zero $C^{*}$-algebra satisfying the UCT with $\mathfrak{B}$ as its unique non-trivial ideal.  We also have non-full extensions
\begin{align*}
\mathfrak{e}_{i}: 0 \to \mathfrak{B}  \to \mathfrak{E}_{i}  \to \mathfrak{A}  \to 0. 
\end{align*}  
Moreover, $\ksix ( \mathfrak{E}_{i} ; \mathfrak{B} )$ is isomorphic to 
\begin{align*}
\xymatrix{ K_{0} ( \mathfrak{B} ) \ar[r]^{ K_{0} ( \iota_{i} ) } & K_{0} ( \mathfrak{E}_{i} ) \ar[r] &  0 \ar[d]\\0\ar[u]&0\ar[l]&0\ar[l]}
\end{align*}
where $\iota_{i}$ is the inclusion of $\mathfrak{B}$ into $\mathfrak{E}_{i}$ and $\pi_{i}$ is the projection from $\mathfrak{E}_{i}$ to $\mathcal{O}_{2}$.  Also note that if $x \in (\mathfrak{E}_{i})_{+}$ such that $x$ is not in $\mathfrak{B}$, then $\overline{\tau}_{i} ( \sigma_{ \mathfrak{e}_{i} } (x) ) < \infty$ and $\overline{ \tau }_{j} ( \sigma_{ \mathfrak{e}_{i} } ( x ) ) = \infty$, where $j \neq i$.

\medskip

\noindent \emph{Claim 1:  If $p$ is a projection in $\mathsf{M}_{n} ( \mathfrak{B} )$, then there exists a projection $q \in \mathsf{M}_{k} ( \mathfrak{E}_{i} )$ such that $[ \iota_{i} ( p ) ] = [ q ]$ and $\pi_{i} ( q ) \neq 0$.}  Let $e$ be a projection in $\mathfrak{E}_{i}$ such that $\pi_{i} ( e )$ is non-zero.  Let $p$ be a non-zero projection in $\mathsf{M}_{n} ( \mathfrak{B} )$.  Then $[ e ] - [ \iota_{i} ( p ) ] = [ \iota_{i} ( p_{1} ) ] - [ \iota_{i} ( p_{2} ) ]$ for projections $p_{1}, p_{2} \in \mathsf{M}_{\ell} ( \mathfrak{B} )$.  Then $[ e \oplus \iota_{i} ( p_{2} ) ] - [ \iota_{i} ( p_{1} ) ] = [ \iota_{i} ( p ) ]$.  Set $q_{2} = \sigma_{ \mathfrak{e}_{i} } ( e \oplus \iota_{i} (p_{2} ) )$ and $q_{1} = \sigma_{ \mathfrak{e}_{i} } (  \iota_{i} (p_{1} ) )$.  Then, $\overline{ \tau }_{i}  ( q_{2} ) >  \overline{ \tau }_{i}  ( q_{1} )$ and $\overline{ \tau }_{j}  ( q_{2} ) = \infty >  \overline{ \tau }_{j}  ( q_{1} )$ for $j \neq i$.  Since $q_{1} \in \mathsf{M}_{\ell} (\mathfrak{B})$, by Lemma~\ref{l:equivalentproj}, there exists $e \in \mathfrak{B}$ such that $e \sim q_{1}$ and $e \leq q_{2}$.  By Lemma~\ref{l:essentialmv}, $\iota_{i} ( p_{1} )$ is Murray-von Neumann equivalent to a subprojection $f$ of $e \oplus \iota_{i} ( p_{2} )$ in $\mathsf{M}_{\ell+1} ( \mathfrak{E}_{i} )$.  Hence, $[ \iota_{i} ( p ) ] = [ e \oplus \iota_{i} ( p_{2} ) ] - [ \iota_{i} ( p_{1} ) ] = [ e \oplus \iota_{i} ( p_{2} ) - f ]$.  Set $q = e \oplus \iota_{i} ( p_{2} ) - f$.  Then $q$ is in $\mathsf{M}_{\ell+1} ( \mathfrak{E}_{i} )$ with $\pi_{i}( q ) = \pi_{i} ( e ) \neq 0$ and $[ \iota_{i} ( p ) ] = [ q ]$.  

\medskip

\noindent \emph{Claim 2:  Suppose $x = [ p ] - [ q ] \in K_{0} ( \mathfrak{E}_{i} )$ with projections $p$ and $q$ in $\mathfrak{E}_{i} \otimes \K$ such that $\pi_{i} ( p)$ and $\pi_{i} ( q )$ are non-zero projections and suppose $\overline{\tau}_{i} ( \sigma_{ \mathfrak{e}_{i} } ( p ) ) > \overline{ \tau }_{i} (  \sigma_{ \mathfrak{e}_{i} } (q) )$.  Then $x \in K_{0} ( \mathfrak{E}_{i} )_{+}$.}  Since $\mathcal{O}_{2} \otimes \K$ is a purely infinite simple $C^{*}$-algebra, there exists a projection $e$ in $\mathcal{O}_{2} \otimes \K$ such that $\pi_{i} ( q ) \sim e$ and $e < \pi_{i} ( p )$.  By Lemma~\ref{l:strict}, there exists $v \in \mathfrak{E}_{i} \otimes \K$ such that $\pi_{i} ( v^{*} v ) = \pi_{i} ( q )$, $v^{*}v \leq q$, $vv^{*} \leq p$, and $\pi_{i} ( vv^{*} ) \neq \pi_{i} ( p)$. Therefore,
\begin{align*}
\overline{\tau}_{i} ( \sigma_{ \mathfrak{e}_{i} } ( p - vv^{*} ) ) > \overline{ \tau }_{i} (  \sigma_{ \mathfrak{e}_{i} } (q - v^{*} v ) )
\end{align*}
and 
\begin{align*}
\overline{\tau}_{j} ( \sigma_{ \mathfrak{e}_{i} } ( p - vv^{*} ) ) = \infty > \overline{ \tau }_{j} (  \sigma_{ \mathfrak{e}_{i} } (q - v^{*} v ) )
\end{align*}
for $j \neq i$.  By Lemma~\ref{l:equivalentproj}, $\sigma_{ \mathfrak{e}_{i} } (q - v^{*} v ) \sim e$ where $e$ is a projection in $\mathfrak{B} \otimes \K$ with $e \leq \sigma_{ \mathfrak{e}_{i} } ( p - vv^{*} )$.  By Lemma~\ref{l:essentialmv}, $q - v^{*} v$ is Murray-von Neumman equivalent to a sub-projection of $p - vv^{*}$.  Hence, 
\begin{align*}
[ q ] = [ q - v^{*} v ] + [ v^{*} v ] \leq [ p - vv^{*} ] + [ vv^{*} ] = [ p ].
\end{align*}
Thus, $x \in K_{0} ( \mathfrak{E}_{i} )_{+}$.  

\medskip

\noindent \emph{Claim 3:  The isomorphism $\lambda_{i} : K_{0} ( \mathfrak{E}_{i} ) \cong \Q \oplus \Q $ induced by $K_{0} ( \iota_{i} )$ takes $K_{0} ( \mathfrak{E}_{i} )_{+}$ onto 
\begin{align*}
\mathcal{H}_{i} = \setof{ ( x, y ) \in \Q \oplus \Q }{ \rho_{i} ( (x,y) ) > 0 } \sqcup \{ (0,0) \}.
\end{align*}
where $\ftn{ \rho_{i} }{ \Q \oplus \Q }{ \Q }$ is the projection in the $i$th coordinate.}  Note that if $p$ and $q$ are projections in $\mathfrak{E}_{i}$ such that $[ p ] = [ q ]$ in $K_{0} ( \mathfrak{E}_{i} )$, then $\tau_{i} ( \sigma_{ \mathfrak{e}_{i} } (p) ) = \tau_{i} ( \sigma_{ \mathfrak{e}_{i} } ( q ) )$. 

We will prove Claim 3 for $i = 1$.  The case $i = 2$ is similar.  Let $a \in K_{0} (\mathfrak{E}_{1} )$ such that $\lambda_{1}(a) =(x,y) \in \Q \oplus \Q $.  Suppose $x > 0$.  

\medskip

\emph{Case 1: Suppose $y > 0$.}  Then there exists a projection $e \in \mathfrak{B} \otimes \K$ such that 
\begin{align*}
( \tau_{1} ( [e] ) , \tau_{2} ( [e] ) ) = ( x, y ).
\end{align*}  
Hence, $a = K_{0} ( [ \iota_{1}(e) ] ) \in K_{0} ( \mathfrak{E}_{1} )_{+}$.   

\medskip

\emph{Case 2:  Suppose $y \leq 0$.}  Note that there exist projections $e_{1} , e_{2} \in \mathfrak{B} \otimes \K$ such that $( \tau_{1} ( [e_{1}] ) , \tau_{2} ( [e_{1}] ) ) = ( x+1, 1 )$ and $( \tau_{1} ( [e_{2}] ) , \tau_{2} ( [e_{2}] ) ) = ( 1, 1-y )$.  Choose projections $q_{1}, q_{2} \in \mathfrak{E}_{1} \otimes \K$ such that $\pi_{1} ( q_{j} ) \neq 0$ and $[ q_{j} ] = K_{0} ( \iota_{1} ) ( [ e_{j} ] )$.  Hence, $\overline{\tau}_{1} ( \sigma_{\mathfrak{e}_{1}} ( q_{j} ) ) = \overline{\tau}_{1} (  e_{j}  )$.  Therefore,
\begin{align*}
\overline{\tau}_{1} ( \sigma_{\mathfrak{e}_{1}} ( q_{1} ) ) &= \overline{\tau}_{1} ( e_{1}  ) = \tau_{1} ( e_{1} ) = x+1 \\
									&> 1 = \tau_{1} ( e_{2} ) =  \overline{\tau}_{1} ( e_{2}  ) = \overline{\tau}_{1} ( \sigma_{\mathfrak{e}_{1}} ( q_{2} ) ).
\end{align*}
By Claim 2, $[q_{1}] - [ q_{2} ] \in K_{0} ( \mathfrak{E}_{1} )_{+}$.  Note that $\lambda_{1} ( [q_{1} ] - [ q_{2} ] ) = ( x + 1, 1 ) - ( 1 , 1 - y ) = ( x , y )$.  Hence, $a = [ q_{1} ] - [q_{2} ] \in K_{0} ( \mathfrak{E}_{1} )_{+}$.

Let $p$ be a projection in $\mathfrak{E}_{1} \otimes \K$ and $[ p ] \neq 0$.  Since $\mathfrak{E}_{1} \otimes \K$ has real rank zero, there exists an increasing sequence of projections $\{ e_{n} \}_{n \in \N }$ in $\mathfrak{B} \otimes \K$ that converges in the strict topology of $\multialg{ \mathfrak{B} \otimes \K }$ to $\sigma_{ \mathfrak{e}_{1} } ( p )$.  Then 
\begin{align*}
\overline{ \tau}_{1} ( \sigma_{ \mathfrak{e}_{1} } ( p ) ) = \sup \setof{ \tau_{1} ( e_{n} ) }{ n \in \N }. 
\end{align*}
Suppose $\overline{ \tau }_{1} ( \sigma_{ \mathfrak{e}_{1} } ( p ) ) = 0$.  Then $\tau_{1} ( e_{n} ) = 0$ for all $n \in \N$.  Therefore, $( 0 , \tau_{2} ( [e_{n}] ) ) = ( \tau_{1} ( [e_{n}] ) , \tau_{2} ( [e_{n}] ) ) \in G$ for all $n \in \N$.  Hence, $\tau_{1} ( [e_{n}] ) = 0$ and $\tau_{2} ( [ e_{n} ] ) = 0$ for all $n \in \N$.  Thus, $ [ e_{n} ] = 0$ for all $n \in \N$.  Since $\mathfrak{B} \otimes \K$ is an AF-algebra, $e_{n} = 0$ for all $n \in \N$ which would imply that $[ p ] = 0$.  Thus, $\overline{ \tau }_{1} ( \sigma_{ \mathfrak{e}_{1}} (p) ) > 0$.  

Let $q_{1}, q_{2}$ be projections in $\mathfrak{B} \otimes \K$ such that $[ p ] = [ \iota_{1} ( q_{1} ) ] - [ \iota_{2} ( q_{2} ) ]$.  Then
\begin{align*}
\tau_{1} ( q_{1} ) - \tau_{1} ( q_{2} ) = \overline{ \tau }_{1} ( \sigma_{ \mathfrak{e}_{1} } ( p ) ) > 0.
\end{align*}
Hence, $\rho_{1} ( \lambda_{1} ( [ p ] )  ) =  \tau_{1} ( [q_{1}] ) - \tau_{1} ( [q_{2}] ) = \tau_{1} ( q_{1} ) - \tau_{2} ( q_{2} ) > 0$.  We have just shown that $\lambda_{1}$ takes $K_{0} ( \mathfrak{E}_{1} )_{+}$ onto $\mathcal{H}_{1}$.

Define $\eta_{0} ( x, y ) = ( y, x )$, $\beta_{0} ( x, y) = ( y, x )$, $\beta_{1} = \eta_{1} = \alpha_{0} = \alpha_{1} = 0$.  By the above paragraphs, $\ftn{ ( \beta_{*}, \eta_{*} , \alpha_{*} ) }{ \ksix ( \mathfrak{E}_{1}; \mathfrak{B} ) }{ \ksix ( \mathfrak{E}_{2} ; \mathfrak{B} ) }$ such that $\alpha_{0}$, $\beta_{0}$, and $\eta_{0}$ are positive isomorphisms.  Suppose now that $\mathfrak{E}_{1}$ and $\mathfrak{E}_{2}$ are stably isomorphic.  By Theorem~2.2 of \cite{ELPmorph}, there exist isomorphisms $\ftn{ \phi }{ \mathfrak{A} \otimes \K }{ \mathfrak{A} \otimes \K }$ and $\ftn{ \psi }{ \mathfrak{B} \otimes \K }{ \mathfrak{B} \otimes \K }$ such that 
\begin{align}\label{commdiagm}
\vcenter{\xymatrix{
\mathfrak{A} \otimes \K \ar[r]^-{ \tau_{ \mathfrak{e}_{1}^{s} } } \ar[d]_{\phi} & \corona{ \mathfrak{B} \otimes \K } \ar[d]^{ \overline{ \psi } } \\
\mathfrak{A} \otimes \K \ar[r]_-{ \tau_{ \mathfrak{e}_{2}^{s} } } & \corona{ \mathfrak{B} \otimes \K } 
}
}
\end{align} 

Let $\mathfrak{D}_{2}'$ be the ideal of $\multialg{ \mathfrak{B} \otimes \K }$ generated by 
\begin{align*}
\setof{ x \in \multialg{ \mathfrak{B} \otimes \K } }{ \text{$\tau_{0} ( x^{*} x ) < \infty$ and $\tau_{1} ( x^{*} x ) < \infty$} },
\end{align*} 
and for $i\in\{0,1\}$ let $\mathfrak{D}_{i}'$ be the ideal of $\multialg{ \mathfrak{B} \otimes \K }$ generated by 
\begin{align*}
\setof{ x \in \multialg{ \mathfrak{B} \otimes \K } }{ \text{$\tau_{i} ( x^{*} x ) < \infty$} }.
\end{align*}
Set for all $i\in\{0,1,2\}$ $\mathfrak{I}_{i}' = \mathfrak{D}_{i}' / (\mathfrak{B} \otimes \K)$.  By \cite{mr_ideal}, $\mathfrak{I}_{0}', \mathfrak{I}_{1}'$, and $\mathfrak{I}_{2}'$ are the only non-trivial proper ideals of $\corona{ \mathfrak{B} \otimes \K }$.   Note that $\ftn{ \tau_{\mathfrak{e}_{i}^{s}}}{ \mathfrak{A} \otimes \K }{ \mathfrak{I}_{i}' }$ and $\tau_{\mathfrak{e}_{i}}^{s} ( a )$ is norm-full in $\mathfrak{I}_{i}'$.  By (\ref{commdiagm}), we have that for each $a \in \mathfrak{A} \otimes \K$, $\tau_{ \mathfrak{e}_2 }^s(a)$ is norm-full in $\mathfrak{I}_1$ which is a contradiction to the fact that the image of $\tau_{ \mathfrak{e}_2 }^s$ is in $\mathfrak{I}_2' \subsetneq \mathfrak{I}_1'$.
\end{proof}

\subsection{Range and permanence}
 
It turns out that for graph $C^*$-algebras $C^*(E)$ with precisely one ideal, as well as in some other classifiable classes, the condition on the order of $K_0(C^*(E))$ which interprets the necessary presence of fullness in cases $\fiin$, $\infi$ and $\inin$ combines with the similary necessary $K$-theoretical interpretation of real rank zero to form an \textbf{sufficient} condition of when a given six-term exact sequence
\[
\xymatrix{
{G'}\ar[r]&{G}\ar[r]&{G''}\ar[d]^{\partial_0}\\
{H''}\ar[u]&{H}\ar[l]&{H'}\ar[l]}
\]
is the $K$-theory of a stable graph $C^*$-algebra with precisely one ideal, provided one knows already that $G'\oplus H'$ and $G''\oplus H''$ are the (ordered) $K$-groups of simple and stable graph $C^*$-algebras.

In \cite{ektw} we shall prove that this will be the case precisely when $\partial_0=0$ and $G$ is ordered in a way which is consistent with Proposition~\ref{grint} above, the key property being
\[
(G'')_+=G''\Longrightarrow G_+=G
\]
(cf. Case (2) of $K$-lexicographic in Definition~\ref{d:klexi}).  Combining this \textbf{range result} with the stable classification result obtained in \cite{semt_classgraphalg} we arrive at a similar permanence result, giving a complete $K$-theoretical description of when $\mathfrak E$ fitting in
\[
  0 \to C^*(E) \to \mathfrak{E} \to C^*(F) \to 0
\]
is a (stable) graph $C^*$-algebra, provided that $C^*(E)$ and $C^*(F)$ are stable and simple.

\subsection{Exact classification}

The classification result for graph $C^*$-algebras with precisely one ideal obtained in \cite{semt_classgraphalg} leads to stable isomorphism of the $C^*$-algebras in question, and is hence only an exact classification result in the -- very important -- special case where the $C^*$-algebra is stable. 
In this final section, drawing on results above, we extend the result to all non-unital graph $C^*$-algebras with one ideal.

The remaining case of exact classification of  a unital graph $C^*$-algebra requires completely different methods and is explained in \cite{ERRstrong}. Note, however, that the $\inin$ case was solved in \cite{segr}, and the $\fifi$ case is trivial. In the remaining cases, one may employ that since the number of vertices must be finite, any simple $AF$ subquotient is either $\mathsf M_n(\CC)$ or $\KKK$.

\begin{lemma}\label{l:full1ideal}
Let $E$ be a graph such that $C^{*} (E)$ has exactly one non-trivial ideal $\mathfrak{I}$.  If $C^{*}(E)$ is not an AF-algebra, then $\mathfrak{I}$ is stable and 
\begin{align*}
\mathfrak{e} : 0 \to \mathfrak{I} \to C^{*} (E) \to C^{*} (E)/ \mathfrak{I} \to 0
\end{align*}
is a full extension.
\end{lemma}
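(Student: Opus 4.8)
The plan is to first pin down the two simple subquotients, then establish stability of $\mathfrak{I}$, and finally read off fullness from the stabilized picture via Corollary~\ref{c:fullstable}. To begin I would record the structural consequences of having exactly one non-trivial ideal. Then every ideal of $C^{*}(E)$ is gauge invariant, so $E$ satisfies Condition~(K) and $C^{*}(E)$ has real rank zero; moreover $\mathfrak{I}$ is the unique minimal non-zero ideal and $C^{*}(E)/\mathfrak{I}$ its unique simple quotient, both of which are again graph $C^{*}$-algebras. By the dichotomy that a simple graph $C^{*}$-algebra is either AF or purely infinite, each of $\mathfrak{I}$ and $C^{*}(E)/\mathfrak{I}$ is AF or purely infinite. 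If both were AF, then the defining saturated hereditary set $H$ and its complement would both be cycle-free, forcing $E$ to contain no cycle and hence $C^{*}(E)$ to be AF. Since $C^{*}(E)$ is assumed not to be AF, at least one of $\mathfrak{I}$, $C^{*}(E)/\mathfrak{I}$ is purely infinite.

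Next I would prove that $\mathfrak{I}$ is stable, first observing that $\mathfrak{I}$ is non-unital: a unital ideal has a central unit and would therefore split off as a direct summand of $C^{*}(E)$, producing a second non-trivial ideal and contradicting uniqueness. If $\mathfrak{I}$ is purely infinite this already suffices, since a non-unital simple purely infinite graph $C^{*}$-algebra is stable. The remaining, and main, case is that $\mathfrak{I}$ is AF while $C^{*}(E)/\mathfrak{I}$ is purely infinite. Here I would use that $\mathfrak{I}$ is an essential ideal together with the cycle with exits in $E^{0}\setminus H$ produced by the infinite quotient: concatenating ever-longer returns along this cycle with a fixed path into $H$ yields, for every $n$, mutually orthogonal projections in $\mathfrak{I}$ all equivalent to a single vertex projection $p_{w}$ with $w\in H$. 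As $\mathfrak{I}$ is simple, $[p_{w}]$ is an order unit, and this forces the scale of $K_{0}(\mathfrak{I})$ to exhaust $K_{0}(\mathfrak{I})_{+}$; equivalently every projection in $\mathfrak{I}$ admits an orthogonal equivalent copy, so stability follows from Theorem~3.3 of \cite{HjelmRord} exactly as in the proof of Lemma~\ref{lem:projfull}.

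Finally I would deduce fullness by descending from the stabilization. The extension $\mathfrak{e}$ meets the hypotheses of Corollary~\ref{c:fullstable}: $C^{*}(E)$ has real rank zero and, by \cite{amp:nonstablekthy}, stable weak cancellation; $\mathfrak{I}$ is stable by the previous paragraph and has the corona factorization property because a simple graph $C^{*}$-algebra is either AF or a Kirchberg algebra; and the only ideal $\mathfrak{D}$ with $\mathfrak{I}\subsetneq\mathfrak{D}$ is $C^{*}(E)$ itself, for which $\mathfrak{D}/\mathfrak{I}=C^{*}(E)/\mathfrak{I}$ is simple and hence has cancellation of projections (when AF) or a norm-full properly infinite projection (when purely infinite). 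Therefore $\mathfrak{e}$ is full if and only if $\mathfrak{e}^{s}$ is full. As the stabilized extension falls outside the $\fifi$ case, it is full by the analysis carried out in the proof of Theorem~4.5 of \cite{semt_classgraphalg}, and hence so is $\mathfrak{e}$.

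The hard part will be the AF-ideal case of the stability argument: converting the heuristic that ``the purely infinite quotient feeds infinitely much into $H$'' into a rigorous proof that the scale of $K_{0}(\mathfrak{I})$ is all of $K_{0}(\mathfrak{I})_{+}$ requires careful bookkeeping of the paths entering $H$ and a comparison argument inside the AF ideal. It is precisely this phenomenon that fails in the $\fifi$ examples of Example~\ref{examplegraph}, where $\mathfrak{I}$ need not be stable, which is why the hypothesis that $C^{*}(E)$ is not AF is essential.
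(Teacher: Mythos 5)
Your proposal is correct in substance but follows a genuinely different route from the paper, whose proof is essentially three citations long: stability of $\mathfrak{I}$ is quoted from Proposition~6.4 of \cite{semt_classgraphalg}; when $\mathfrak{I}$ is purely infinite (hence a stable Kirchberg algebra) fullness is immediate because $\mathfrak{e}$ is essential and $\corona{\mathfrak{I}}$ is simple, with no stabilization and no $K$-theory; and only when $\mathfrak{I}$ is AF does the paper invoke fullness of the stabilized extension (Proposition~3.10 of \cite{semt_classgraphalg}) and descend to $\mathfrak{e}$ via Corollary~\ref{c:fullstable}. You instead (a) re-prove stability from first principles and (b) run both cases through Corollary~\ref{c:fullstable}, quoting the fullness established in the course of the proof of Theorem~4.5 of \cite{semt_classgraphalg}; step (b) is valid, if slightly wasteful in the purely infinite case. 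The real divergence is (a), which amounts to a self-contained proof of the quoted Proposition~6.4: non-unitality of $\mathfrak{I}$, Zhang's dichotomy \cite{sz:ppisc} when $\mathfrak{I}$ is purely infinite, and, in the remaining case, the family $s_{\mu^{n}\lambda}s_{\mu^{n}\lambda}^{*}$ of mutually orthogonal projections in $\mathfrak{I}$, each equivalent to a vertex projection $p_{w}$ with $w\in H$. That sketch does complete: essentiality of $\mathfrak{I}$ forces every vertex to connect to $H$ (otherwise the saturation of the set of vertices reachable from a non-connecting vertex gives an ideal meeting $\mathfrak{I}$ trivially), orthogonality holds once $\lambda$ is chosen without $\mu$ as an initial segment, and then, since $[p_{w}]$ is an order unit, every $N[p_{w}]$ lies in the scale, and the scale of an AF algebra is a hereditary subset of the positive cone, the scale is all of $K_{0}(\mathfrak{I})_{+}$. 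One step, however, needs repair: scale exhaustion is \emph{not} ``equivalently'' the Hjelmborg--R{\o}rdam criterion you invoke, because knowing that $2[p]$ is the class of a projection of $\mathfrak{I}$ produces two orthogonal copies of $p$ somewhere in $\mathfrak{I}$, but not a copy orthogonal to $p$ itself. The clean fix inside your framework is to conclude stability from Elliott's classification \cite{af}: scale exhaustion says the identity on $K_{0}$ is a scale-preserving order isomorphism between the invariants of $\mathfrak{I}$ and $\mathfrak{I}\otimes\K$, whence $\mathfrak{I}\cong\mathfrak{I}\otimes\K$ (alternatively, the orthogonal family kills every bounded trace, and a non-unital simple AF algebra with no bounded trace is stable). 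With that repair your argument is complete; what it buys is an explicit graph-theoretic explanation of why the ideal must be stable --- precisely the mechanism that fails for the non-stable ideal in Example~\ref{examplegraph} --- at the price of re-deriving, with the bookkeeping left to the reader, a result the paper simply cites.
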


\begin{proof}
By Proposition~6.4 of \cite{semt_classgraphalg}, $\mathfrak{I}$ is stable.  Since $\mathfrak{I}$ is stably isomorphic to a simple graph $C^{*}$-algebra, $\mathfrak{I}$ is either purely infinite or an AF-algebra.  If $\mathfrak{I}$ is purely infinite, then $\mathfrak{e}$ is a full extension since $\mathfrak{e}$ is an essential extension and $\corona{ \mathfrak{I} }$ is simple.  

Suppose $\mathfrak{I}$ is an AF-algebra.  By Proposition~3.10 of \cite{semt_classgraphalg}, 
\begin{align*}
0 \to \mathfrak{I} \otimes \K \to C^{*} (E) \otimes \K \to  \mathfrak{I}  \otimes \K \to 0
\end{align*} 
is a full extension.  By Corollary~\ref{c:fullstable}, $\mathfrak{e}$ is a full extension.
\end{proof}
 
\begin{defin}
Let $\mathfrak{A}$ be a $C^{*}$-algebra. The scale of $K_{0} ( \mathfrak{A} )$ is 
\begin{align*}
\Sigma \mathfrak{A} = \setof{ x \in K_{0} ( \mathfrak{A} ) }{ \text{$x = [ p ]$ for some projection $p$ in $\mathfrak{A}$}}
\end{align*}

For $C^{*}$-algebras $\mathfrak{A}$ and $\mathfrak{B}$, we say that an isomorphism $\ftn{ \alpha }{ K_{0} ( \mathfrak{A} ) }{ K_{0} ( \mathfrak{B} ) }$ is \emph{scale preserving} if either
\begin{itemize}
\item[(a)] $\mathfrak{A}$ and $\mathfrak{B}$ are unital $C^{*}$-algebras and $\alpha ( [ 1_{ \mathfrak{A} } ] ) = [ 1_{ \mathfrak{B} } ]$ or

\item[(b)] $\mathfrak{A}$ and $\mathfrak{B}$ are non-unital $C^{*}$-algebras and $\alpha$ is an isomorphism from $\Sigma \mathfrak{A}$ to $\Sigma \mathfrak{B}$.
\end{itemize}
\end{defin}

\begin{theor}
Let $E_{1}$ and $E_{2}$ be graphs and suppose $C^{*} ( E_{i} )$ is a non-unital $C^{*}$-algebra such that $C^{*} ( E_{i} )$ has exactly one non-trivial ideal $\mathfrak{I}_{i}$.  Then $C^{*} ( E_{1} ) \cong C^{*} ( E_{2} )$ if and only if there exists an isomorphism $\ftn{ ( \beta_{*}, \eta_{*} , \alpha_{*} ) }{ \ksix ( C^{*} ( E_{1} ) ; \mathfrak{I}_{1} ) }{ \ksix ( C^{*} ( E_{2} ) ; \mathfrak{I}_{2} ) }$ such that $\alpha_{0}$, $\beta_{0}$, and $\eta_{0}$ are positive isomorphisms, and  $\eta_0$ and $\alpha_0$ are scale preserving.\footnote{This theorem is not correct as stated.  See arXiv:1505.05951 for more details.}

In the case $\fifi$ all but the scaled ordered groups $K_0(C^*(E_i)))$ may be deleted from the invariant, leaving it complete. In the remaining cases, the orders and scales of $K_0(C^*(E_i))$ may be deleted from the invariant, leaving it complete. 
\end{theor}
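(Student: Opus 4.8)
The plan is to prove the two directions separately and then to account for the claimed redundancies in the invariant case by case. For the \emph{only if} direction, an isomorphism $\phi\colon C^*(E_1)\to C^*(E_2)$ must carry the unique non-trivial ideal $\mathfrak I_1$ onto $\mathfrak I_2$, so it induces a commuting isomorphism $(\beta_*,\eta_*,\alpha_*)$ of the two six-term sequences. Positivity of $\beta_0,\eta_0,\alpha_0$ is immediate from functoriality of the positive cone, and since $\phi$ and its induced map on the quotient send projections to projections, $\eta_0$ and $\alpha_0$ are scale preserving.

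For the \emph{if} direction I would split according to whether $C^*(E_i)$ is AF. In the $\fifi$ case both $\mathfrak I_i$ and the quotient are AF, hence $C^*(E_i)$ is itself AF with $K_1=0$; here I discard everything except the scaled ordered group $K_0(C^*(E_i))$ and invoke Elliott's classification of AF algebras, so that the scale preserving positive isomorphism $\eta_0$ lifts to a $*$-isomorphism $C^*(E_1)\cong C^*(E_2)$, which automatically respects the order-ideal lattice and is therefore compatible with the ideal structure. In the remaining cases $C^*(E_i)$ is not AF, so by Lemma~\ref{l:full1ideal} the ideal $\mathfrak I_i$ is stable and $\mathfrak e_i$ is full. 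I would then feed the ordered data on $K_0(\mathfrak I_i)$ and $K_0(C^*(E_i)/\mathfrak I_i)$ together with the commuting six-term isomorphism into the stable classification theorem of \cite{semt_classgraphalg} to obtain a $*$-isomorphism $\Psi\colon C^*(E_1)\otimes\K \to C^*(E_2)\otimes\K$ inducing $\eta_0$ on $K_0$.

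The heart of the argument, and the step I expect to be the main obstacle, is upgrading this stable isomorphism to an honest one using scale preservation of $\eta_0$. Since the relevant graphs satisfy condition (K), each $C^*(E_i)$ has real rank zero, hence an increasing approximate unit of projections $\{p_n^{(i)}\}$, and by \cite{amp:nonstablekthy} each has stable weak cancellation. Viewing $p_n^{(1)}$ inside $C^*(E_1)\otimes\K$, the class $[\Psi(p_n^{(1)})]=\eta_0([p_n^{(1)}])$ lies in $\Sigma C^*(E_2)$ because $\eta_0$ is scale preserving, so it is represented by a projection $q_n\in C^*(E_2)$; matching the ideals generated and applying stable weak cancellation yields $\Psi(p_n^{(1)})\sim q_n$ in $C^*(E_2)\otimes\K$. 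An Elliott-type intertwining, arranging the implementing partial isometries compatibly as $n\to\infty$, then identifies $\overline{\bigcup_n p_n^{(1)}C^*(E_1)p_n^{(1)}}=C^*(E_1)$ with $\overline{\bigcup_n q_nC^*(E_2)q_n}=C^*(E_2)$. The delicate points are the simultaneous matching of $K_0$-classes and of generated ideals, so that stable weak cancellation applies, and the compatibility of the partial isometries through the inductive limit.

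Finally, to justify the stated deletions I would show that the discarded data is determined by what remains. In the $\fifi$ case the ordered $K_0$ of ideal and quotient are the order ideal and its quotient inside $K_0(C^*(E_i))$ and $K_1$ vanishes, so the scaled ordered middle group carries the whole invariant. In the non-$\fifi$ cases fullness, via Proposition~\ref{grint}, recovers the order on $K_0(C^*(E_i))$ from the orders on the ideal and quotient; and the scale $\Sigma C^*(E_i)$ is recovered from $\Sigma$ of the quotient together with $K_0(\mathfrak I_i)_+$ by lifting projections through the real-rank-zero quotient map and filling in with projections from the stable ideal, so that $\alpha_0$ scale preserving, which is in any case forced by $\eta_0$ scale preserving together with real rank zero, suffices. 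Hence the reduced invariants remain complete.
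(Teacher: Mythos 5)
Your overall split (the $\fifi$/AF case via Elliott, the remaining cases via Lemma~\ref{l:full1ideal}) matches the paper, but your route through \emph{stable} classification followed by ``unstabilization'' has two genuine gaps. First, you need the stable isomorphism $\Psi$ supplied by \cite{semt_classgraphalg} to induce the \emph{given} $\eta_0$ on $K_0$; otherwise scale preservation, which is a property of $\eta_0$ and not of $K_0(\Psi)$, is useless in the cut-down step, and you cannot repair this after the fact since $K_0(\Psi)\circ\eta_0^{-1}$ need not preserve scales. The theorem in \cite{semt_classgraphalg} is an existence statement; producing a stable isomorphism lifting a \emph{prescribed} isomorphism of invariants is exactly the content of the $KK$-machinery (Theorem~2.3 of \cite{ERRshift}) and must be argued, not assumed. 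Second, and more seriously, the ``Elliott-type intertwining'' is where the whole difficulty of the theorem lives, and your sketch does not resolve it. At stage $n$ you must find a projection $e_n\in C^*(E_2)$ with $e_n\sim \Psi(p_{n+1}^{(1)})-\Psi(p_n^{(1)})$ and $e_n$ \emph{orthogonal to the already constructed} $q_n$, so that $v_{n+1}=v_n+w_n$ extends $v_n$. Weak cancellation only yields $q_n\oplus d_n\sim q_{n+1}''$ inside $C^*(E_2)\otimes\K$; transporting that equivalence so that it restricts to the identity on $q_n$ requires ``room'' orthogonal to $q_n$ inside the non-stable, non-unital algebra $C^*(E_2)$, and such room can genuinely be absent in real rank zero algebras of this kind (this is the same continuous-scale phenomenon exhibited in Example~\ref{examplegraph}, which is precisely why stabilization is delicate here). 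Moreover, in the cases $\fiin$ and $\inin$ a norm-full projection of $C^*(E_2)$ and a projection inside $\mathfrak{I}_2$ can share the same class in $K_0(C^*(E_2))$ (the quotient is purely infinite, so nonzero projections can have vanishing class under $K_0(\pi)$), so matching scale elements does \emph{not} match generated ideals; to choose full representatives you would already need the fullness results of this paper ($K_0=K_0^{++}$), and this is nowhere supplied in your sketch.

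The paper's proof avoids all of this by never passing through stable isomorphism: it lifts $\beta_0$ and $\alpha_0$ to $*$-isomorphisms of the ideals and of the quotients (Elliott or Kirchberg--Phillips; this is where scale, respectively unit, preservation of $\alpha_0$ is consumed), uses Theorem~2.3 of \cite{ERRshift} to make the Busby invariants $\tau_{\mathfrak{e}_1}$ and $\tau_{\mathfrak{e}_2}$ $KK$-congruent, and then invokes Lemma~4.5 of \cite{segrer:ccfis}, which for \emph{full} extensions by \emph{stable} ideals (exactly what Lemma~\ref{l:full1ideal} provides) yields an isomorphism of the extension algebras themselves. This also explains the last paragraph of the theorem: in the non-$\fifi$ cases the order and scale of $K_0(C^*(E_i))$ are deletable because the paper's argument never uses them, whereas your argument consumes the scale of $K_0(C^*(E_i))$ in an essential, and at present unjustified, way.
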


\begin{proof}
The ``only if'' direction is clear.  We now prove the ``if'' direction.  If $C^{*} ( E_{1} )$ is an AF-algebra, then $C^{*} ( E_{2} )$ is an AF-algebra since $\alpha_{0}$ and $\beta_{0}$ are positive isomorphisms.  Thus the result follows from Elliott's classification \cite{af}.  

In the case when $C^{*} ( E_{1} )$ is not an AF-algebra, let $\mathfrak{e}_{i}$ be the extension 
\begin{align*}
0 \to \mathfrak{I}_{i} \to C^{*} ( E_{i} ) \to C^{*} ( E_{i} ) / \mathfrak{I}_{i} \to 0.
\end{align*}
By  Lemma~\ref{l:full1ideal}  $\mathfrak{e}_{i}$ is a full extension and $\mathfrak{I}_{i}$ is stable.
We complete the proof by following  the approach in Theorem~3.8 of \cite{ERRshift} adding the technology of \cite{segrer:ccfis}.

Both $\alpha_0$ and $\beta_0$ are induced by $*$-isomorphisms by either \cite{af} or \cite{ekncp:eeccao}. Conjugating by these as in \cite{extpurelyinf} and appealing to Theorem~2.3 of \cite{ERRshift}  we get that there exist isomorphisms $\ftn{ \phi_{ 2 } }{ \mathfrak{I}_{1} }{ \mathfrak{I}_{2} }$ and $\ftn{ \phi_{1} }{ C^{*} ( E_{1} ) / \mathfrak{I}_{1} }{ C^{*} ( E_{2} ) / \mathfrak{I}_{2} }$ such that 
\begin{align*}
\kk ( \phi_{1 } ) \times [ \tau_{ \mathfrak{e}_{2} }  ] = [  \tau_{ \mathfrak{e}_{1} } ] \times \kk ( \phi_{2} )
\end{align*}
in $\kk^{1} ( C^{*} ( E_{1} ) / \mathfrak{I}_{1} , \mathfrak{I}_{2} )$.  Hence, by Lemma~4.5 of \cite{segrer:ccfis}, $C^{*} ( E_{1} ) \cong C^{*} ( E_{2} )$.
\end{proof}

As above, it is possible to describe the scale explicitly except in the $\fifi$ case. We will not do so here. Note that by the Zhang dichotomy (\cite{sz:ppisc}) we do not need to concern ourselves with the scale in the $\fiin$ and $\inin$ cases unless the quotient is unital.

We end this section by comparing stable and exact isomorphism for the class of $C^*$-algebras $C^*(G[m,(n_i)])$ with $1<m<\infty$.
Note that these $C^*$-algebras are not stable, as they have unital quotients $\mathcal O_m$. This observation also implies that for 
$C^*(G[m,(n_i)])$ to be stably isomorphic $C^*(G[m',(n'_i)])$, it is necessary that $m=m'$. We will abbreviate $\ZZ^\infty=\sum_{i=1}^\infty \ZZ$.

\begin{examp}
For $(n_i),(n_i')\in\ZZ^\infty$, choose $k$ such that $n_i=n_i'=0$ whenever $i>k$ and set
\[
N=\sum_{i=1}^k2^{k-i}n_i\qquad
N'=\sum_{i=1}^{k'}2^{k'-i}n'_i.
 \]
Then $C^*(G[m,(n_i)])\cong C^*(G[m,(n'_i)])$ precisely when for some $\ell,\ell'\geq 0$ we have
\[
2^\ell N\equiv 2^{\ell'} N' \mod m-1.
\]
Then $C^*(G[m,(n_i)]) \otimes\mathbb K\cong C^*(G[m,(n'_i)]) \otimes\mathbb K$ precisely when for some $\ell,\ell'\geq 0$ and some unit $x$ of $\ZZ/(m-1)$, we have
\[
2^\ell N\equiv x2^{\ell'} N' \mod m-1
\]
(which is the same as
$
(N,M)=(N',M)
$
for $M$ the largest odd factor of $m-1$).
\end{examp}

\begin{proof}
By our classification result, we just need to compare $\ksix(C^*(G[m,(n_i)]);\mathfrak I)$ and
$\ksix(C^*(G[m',(n_i')]);\mathfrak I)$ where we may identify the ideals since both are isomorphic to the stabilized UHF algebra. 
Note that the order-preserving automorphisms of the $K_0$-group of this ideal are of the form $1\mapsto 2^k$ with $k\in \ZZ$; considering $\ZZ[\frac{1}{2}]$ as the cokernel of the map in $\ZZ^\infty$ given by the matrix
\[
B-I=
\begin{bmatrix}
-1&   &      \\
2& -1&    \\
&\ddots&\ddots
\end{bmatrix}
\]
all such maps are induced by 
\[
\beta_{\ell,\ell'}(\mathbf{e}_i)=2^\ell\mathbf{e}_{i+\ell'}
\]
for $\ell,\ell'\geq 0$. There is only the trivial unit-preserving automorphism of $K_0(\mathcal O_m)$.

Thus, to decide whether or not the $C^*$-algebras are isomorphic, we need to decide if there is $\ell,\ell'$ and an isomorphism $\xi$
such that
\[
\xymatrix{
{\ZZ^\infty}\ar[r]\ar[d]_{\beta_{\ell,\ell'}}&{\ZZ^\infty\oplus \ZZ}\ar[d]_{\xi}\ar[r]&{\ZZ}\ar@{=}[d]\\
{\ZZ^\infty}\ar[r]&{\ZZ^\infty\oplus \ZZ}\ar[r]&{\ZZ}}
\]
commutes and 
\[
\xi\left(\operatorname{im}\begin{bmatrix}&&n_1\\ &B-I&n_2\\ &&\vdots\\ &&m-1\end{bmatrix}\right)\subseteq
\operatorname{im}\begin{bmatrix}&&n_1'\\ &B-I&n_2'\\ &&\vdots\\ &&m-1\end{bmatrix}.
\]
Employing the fact that $\xi(\mathbf 0,1)=(\mathbf v,1)$ for some vector $\mathbf v\in \ZZ^\infty$, we get the stated condition.

In the case of stable isomorphism, the automorphism on $K_0(\mathcal O_m)$ is not required to be unital and is hence given by some unit of $\ZZ/(m-1)$. The computations follow similarly.
\end{proof}

We note that for many $m$, stable isomorphism in this class in fact is the same as exact isomorphism, even though the $C^*$-algebras are never stable. The smallest $m$ for which the notions differ is 8.

\section{Acknowledgements}

The authors would like to thank Takeshi Katsura, Ping Wong Ng and Mark Tomforde for helpful discussions which lead to the sharpening of many of our results.


\begin{thebibliography}{10}


\bibitem{agop:exhangerings}
{P.~Ara, K.~R. Goodearl, K.~C. O'Meara, and E.~Pardo}, {\em Separative
  cancellation for projective modules over exchange rings}, Israel J. Math.,
  105 (1998), pp.~105--137.

\bibitem{amp:nonstablekthy}
{P.~Ara, M.~A. Moreno, and E.~Pardo}, {\em Nonstable {$K$}-theory for graph
  algebras}, Algebr. Represent. Theory, 10 (2007), pp.~157--178.

\bibitem{blackadarB}
{B.~Blackadar}, {\em {$K$}-theory for operator algebras}, vol.~5 of
  Mathematical Sciences Research Institute Publications, Cambridge University
  Press, Cambridge, second~ed., 1998.

\bibitem{BrownSemi}
{L.~G. Brown}, {\em Semicontinuity and multipliers of {$C\sp *$}-algebras},
  Canad. J. Math., 40 (1988), pp.~865--988.

\bibitem{BDF1}
{\sc L.~G. Brown, R.~G. Douglas, and P.~A. Fillmore}, {\em Extensions of
  {$C\sp{\ast} $}-algebras, operators with compact self-commutators, and
  {$K$}-homology}, Bull. Amer. Math. Soc., 79 (1973), pp.~973--978.

\bibitem{BDF2}
{\sc L.~G. Brown, R.~G. Douglas, and P.~A. Fillmore}, {\em Unitary equivalence
  modulo the compact operators and extensions of {$C\sp{\ast} $}-algebras}, in
  Proceedings of a Conference on Operator Theory (Dalhousie Univ., Halifax,
  N.S., 1973), Berlin, 1973, Springer, pp.~58--128. Lecture Notes in Math.,
  Vol. 345.

\bibitem{realrank}
{L.~G. Brown and G.~K. Pedersen}, {\em {$C\sp *$}-algebras of real rank
  zero}, J. Funct. Anal., 99 (1991), pp.~131--149.

\bibitem{tmcsemt:imkga}
{T.M.~Carlsen, S.~Eilers, and M.~Tomforde}, {\em Index maps in the {$K$}-theory
 of graph algebras}, J.~K-theory,  9 (2012), pp.~385-406.


\bibitem{kthypureinf}
{J.~Cuntz}, {\em {$K$}-theory for certain {$C\sp{\ast} $}-algebras}, Ann.
  of Math. (2), 113 (1981), pp.~181--197.

\bibitem{ektw}
{S.~Eilers, T.~Katsura, M.~Tomforde and J.~West}, {\em The ranges of $K$-theoretic invariants for
nonsimple graph algebras}, ArXiv.1202.1989v1.

\bibitem{ELPmorph}
{S.~Eilers, T.~A. Loring, and G.~K. Pedersen}, {\em Morphisms of extensions
  of {$C\sp *$}-algebras: pushing forward the {B}usby invariant}, Adv. Math.,
  147 (1999), pp.~74--109.


\bibitem{segr}
{S. Eilers and G. Restorff}, {\em On R\o rdam's classification of certain $C^*$-algebras with one nontrivial ideal},  
Abel Symposia 1 (2006), pp. 87--96.

\bibitem{ERRstrong}
{S.~Eilers, G.~Restorff, and E.~Ruiz}, {\em Strong classification of extensions of classifiable {$C^{*}$}-algebras}, ArXiv.1301.7695.

\bibitem{ERRshift}
{S.~Eilers, G.~Restorff, and E.~Ruiz}, {\em Classification of
  extensions of classifiable {$C^\ast$}-algebras}, Adv. Math., 222 (2009),
  pp.~2153--2172.

\bibitem{ERRlinear}
{S.~Eilers, G.~Restorff, and E.~Ruiz}, {\em On graph
  {$C^*$}-algebras with a linear ideal lattice}, Bull. Malays. Math. Sci. Soc.
  (2), 33 (2010), pp.~233--241.

\bibitem{segrer:ccfis}
{S.~Eilers, G.~Restorff, and E.~Ruiz}, {\em Classifying {$C^*$}-algebras
  with both finite and infinite subquotients}, J. Funct. Anal. 265 (2013), no.~3, pp.~449--468.


\bibitem{semt_classgraphalg}
{S.~Eilers and M.~Tomforde}, {\em On the classification of nonsimple graph
 {$C^*$}-algebras}, Math. Ann., 346 (2010), pp.~393--418.

\bibitem{af}
{G.~A. Elliott}, {\em On the classification of inductive limits of
  sequences of semisimple finite-dimensional algebras}, J. Algebra, 38 (1976),
  pp.~29--44.

\bibitem{gaedk:avbfat}
{G.A. Elliott and D. Kucerovsky}, {\em An abstract
  {V}oiculescu-{B}rown-{D}ouglas-{F}illmore absorption theorem}, Pacific J.
  Math. \textbf{198} (2001), pp. 385--409. 

\bibitem{kgdh:stenosis}
{K.~R. Goodearl and D.~E. Handelman}, {\em Stenosis in dimension groups and
  {AF} {$C^{\ast} $}-algebras}, J. Reine Angew. Math., 332 (1982), pp.~1--98.

\bibitem{dh:extAF}
{D.~Handelman}, {\em Extensions for {AF} {$C^{\ast}$} algebras and
  dimension groups}, Trans. Amer. Math. Soc., 271 (1982), pp.~537--573.


\bibitem{HjelmRord}
{J.~v.~B. Hjelmborg and M.~R{\o}rdam}, {\em On stability of {$C\sp
  *$}-algebras}, J. Funct. Anal., 155 (1998), pp.~153--170.

\bibitem{ekncp:eeccao}
{E.~Kirchberg and N.C.~Phillips}, {\em Embedding of exact $C^*$-algebras in the Cuntz algebra $\mathcal O_2$}, J. Reine Angew. Math. 525
(2000), pp. 17--53.

 
 \bibitem{KucNgCFPdef}
{D.~Kucerovsky and P.~W. Ng}, {\em The corona factorization property and
  approximate unitary equivalence}, Houston J. Math., 32 (2006), pp.~531--550.



\bibitem{HL_fullext}
{H.~Lin}, {\em Full extensions and approximate unitary equivalence},
  Pacific J. Math., 229 (2007), pp.~389--428.
  
\bibitem{HLextI}
{\sc H.~Lin}, {\em {$C^*$}-algebra extensions of {$C(X)$}}, Mem. Amer. Math.
  Soc., 115 (1995), pp.~vi+89.

\bibitem{HLextII}
{\sc H.~Lin}, {\em Extensions by
  {$C^*$}-algebras of real rank zero. {II}}, Proc. London Math. Soc. (3), 71
  (1995), pp.~641--674.  
  
\bibitem{contscale}
{H.~Lin}, {\em Simple {$C\sp *$}-algebras with continuous scales and simple
  corona algebras}, Proc. Amer. Math. Soc., 112 (1991), pp.~871--880.  

\bibitem{hl:imasac}
{H.~Lin}, {\em Ideals of multiplier algebras of simple {$AF$} {$C^*$}-algebras},
  Proc.\ Amer.\ Math.\ Soc. \textbf{104} (1988), pp. 239--244.

\bibitem{NgCFP}
{P.~W. Ng}, {\em The corona factorization property}, Operator theory, operator algebras, and applications, 97Ð110, Contemp. Math., 414, Amer. Math. Soc., Providence, RI, 2006.

\bibitem{PPV1}
{\sc M.~Pimsner, S.~Popa, and D.~Voiculescu}, {\em Homogeneous {$C^{\ast}
  $}-extensions of {$C(X)\otimes K(H)$}. {I}}, J. Operator Theory, 1 (1979),
  pp.~55--108.

\bibitem{PPV2}
{\sc M.~Pimsner, S.~Popa, and D.~Voiculescu}, {\em Homogeneous
  {$C^{\ast} $}-extensions of {$C(X)\otimes K(H)$}. {II}}, J. Operator Theory,
  4 (1980), pp.~211--249.



\bibitem{mr_ideal}
{M.~R{\o}rdam}, {\em Ideals in the multiplier algebra of a stable {$C\sp
  \ast$}-algebra}, J. Operator Theory, 25 (1991), pp.~283--298.


\bibitem{extpurelyinf}
{M.~R{\o}rdam}, {\em Classification of extensions of certain {$C\sp
  *$}-algebras by their six term exact sequences in {$K$}-theory}, Math. Ann.,
  308 (1997), pp.~93--117.

\bibitem{SZ_proj}
{S.~Zhang}, {\em On the structure of projections and ideals of corona
  algebras}, Canad. J. Math., 41 (1989), pp.~721--742.


\bibitem{sz:ppisc}
{S.~Zhang}, {\em A property of purely infinite simple ${C}^*$-algebras}, Proc.
 Amer. Math. Soc. \textbf{109} (1990), pp. 717--720.

\end{thebibliography}
\end{document}